\shorttitle{Weak Uniqueness for the governing equation of the joint law} 
\def\blue{\textcolor{blue}}
\def\R{\mathbb R}
\def\N{\mathbb N}
\def\E{\mathbb E}
\def\1{\textbf{1}}
\def\PP{\mathbb P}
\begin{document}

\title{Weak Uniqueness for the PDE Governing the Joint Law of a Diffusion and Its Running Supremum}

\authorone[Institut Mathématiques de Toulouse]{Laure Coutin} 
\addressone{Institut de Mathématiques de Toulouse. CNRS UMR 5219. Université Paul Sabatier, 118 route de Narbonne, F-31062 Toulouse cedex 09.} 
\emailone{coutin@math.univ-toulouse.fr} 

\authortwo[Institut Mathématiques de Toulouse]{Lorick Huang} 
\addresstwo{INSA de Toulouse, IMT UMR CNRS 5219, Université de Toulouse, 135 avenue de Rangueil 31077 Toulouse Cedex 4 France} %
\emailtwo{lhuang@insa-toulouse.fr} 

\authorthree[Institut Mathématiques de Toulouse]{Monique Pontier} 
\addressthree{Institut de Mathématiques de Toulouse. CNRS UMR 5219. Université Paul Sabatier, 118 route de Narbonne, F-31062 Toulouse cedex 09.} 
\emailthree{monique.pontier@math.univ-toulouse.fr} 

\begin{abstract}

In a previous work \cite{coutin:pontier:2020}, it was shown that the joint law of a diffusion process and the running supremum of its first component is absolutely continuous, and that its density satisfies a non standard weak partial differential equation (PDE). In this paper, we establish the uniqueness of the solution to this PDE, providing a more complete understanding of the system’s behavior and further validating the approach introduced in \cite{coutin:pontier:2020}.
\end{abstract}

\keywords{boundary condition; joint law; weak uniqueness; running supremum}

\ams{60H30}{35D30}

\section{Introduction}

The study of the joint distribution of a stochastic process and its running supremum has important applications across various domains, particularly in mathematical finance, where it plays a key role in pricing exotic options such as lookback and barrier options. These options depend on the extreme behavior of the underlying asset over time, making an understanding of joint distributions crucial for accurate pricing and effective risk management.

Assets are typically modeled using stochastic differential equations of the form:
\begin{eqnarray*}
dX_t &=& B(X_t) dt + \sigma(X_t) dW_t, \quad X_0 \sim f_0, \\
M_t &=& \sup_{s \le t} X_s^1,
\end{eqnarray*}
where \( X_0 \) is a random variable, independent of \( W \), with density \( f_0 \). In this equation, $X$ is $\R^d$-valued, and $X_s^1$ represents its first component.

For instance, for a fixed maturity time $T$, the payoff of a lookback put option with a floating strike depends on the running supremum of the asset price and is given by:
$$
\mbox{Payoff} = M_T-X_T.
$$
This setup leads to the challenge of characterizing the joint law of  $(X_T,M_T)$, which is non-trivial as it depends on the entire path of the asset  $(X_t)_{t \le T}$.

In the literature, the pricing of a lookback option with a floating strike is available in the special case where $X$ follows a geometric Brownian motion (see, e.g., Musiela and Rutkowski \cite{Musiela:Rutkowski}). However, pricing such options is significantly more complex than for European options due to the path dependency.

Many studies, starting as early as \cite{CsFolSal}, have focused on understanding the joint distribution of \( (V_t)_{t \ge 0} = (M_t, X_t)_{t \ge 0} \). These works primarily aimed at theoretical insights, such as the regularity of hitting or local times. While these results have significantly advanced our understanding, their direct applicability to practical problems, such as pricing or hedging, can be limited.

For example, \cite{BHR} provides sharp upper and lower bounds, which are useful but leave room for improvement. Continuous efforts have been made to refine our understanding of the joint distribution. One such attempt is found in \cite{dorobantu}, which offers an approximation to speed up Monte Carlo simulations. Hayashi and Kohatsu-Higa \cite{hayashi:kohastu:2013} further demonstrated that the joint law of \( (V_t)_{t \ge 0} \) is absolutely continuous, a result expected in analogy to the well-known case of Brownian motion and its running supremum. In the Brownian setting, an explicit density formula has existed since \cite{levy}, with more recent treatments in \cite{CJY}. However, these arguments rely on the reflection principle, which does not extend to general diffusions.

In a series of papers \cite{coutin:pontier:2019} and \cite{coutin:pontier:2020}, the authors established that the distribution of \( (M_t, X_t) \) is absolutely continuous, admits a series expansion and satisfies a weak PDE. For  \( \Phi \)  a test function belonging to a suitable class of functions, and using the notation  $(m,x)=(m,x^1,\tilde{x})\in \R^{d+1}$, the PDE can be expressed as:
\begin{eqnarray}
\label{edp1m}
&&\int_{\cal T}\Phi(m,x)p(m,x;t)dmdx 
=\int_{\mathbb{R}^d} \Phi(m,m,\tilde{x})f_0(m,\tilde{x})dmd\tilde{x} \\
&+& \int_0^t\int_{\cal T} p(m,x;s) \mathcal{L} \Phi(m,x) \, dmdx \, ds 
+ \frac{1}{2} \int_0^t\int_{\mathbb{R}^d} \frac{\partial \Phi}{\partial m} (m,m,\tilde{x}) p(m,m,\tilde{x};s) \, dmd\tilde{x} \, ds. \nonumber
\end{eqnarray}

The first term $\Phi(m,m,\tilde{x})$ corresponds to the case where $X_0^1 =M_0$, the second term is the classical contribution, and the third term comes from the treatment of $\E [ \int_0^t \frac{\partial}{\partial m} F(M_s, X_s) dM_s]$ and is related to $dM_s$.
As a local-time term, this contribution only increases during times $s$ where $M_s = X_s$ and $M_s$ is strictly increasing (see Theorem 2.3 of \cite{coutin:pontier:2020} for more details). 
In addition,  \( \mathcal{T} \) denotes the subset of \( \mathbb{R} \times \mathbb{R}^d \) defined by:
\[
\mathcal{T} := \{ (m,x) \in \mathbb{R} \times \mathbb{R}^d \mid m \ge x^1 \} \subset \R^{d+1}
\]
with \( \mathcal{L} \) being the generator of \( (X_t)_{t \ge 0} \) acting on the variable \( x \) as:
\[
\mathcal{L} f(x) = B(x) \cdot \nabla f(x) + \frac{1}{2} \Delta f(x).
\]

In this paper, we extend the operator \( \mathcal{L} \) to functions \( \Phi \) in \( C^2(\mathbb{R}^{d+1}, \mathbb{R}) \) as:
\[
\mathcal{L}(\Phi)(m,x) = \sum_{i=1}^d B^i(x) \partial_{x^i} \Phi(m,x) + \frac{1}{2}\sum_{i=1}^d  \partial_{x^i x^i}^2 \Phi(m,x),
\]

The main contribution of this paper is to prove the uniqueness of the solution to the PDE \eqref{edp1m} in a suitable functional space \( \mathcal{X} \), defined below. As a result, the series expansion obtained in \cite{coutin:pontier:2020} is shown to be the unique solution in a weak sense to the PDE \eqref{edp1m}. 



\begin{remark}[Notations]
Throughout this paper we will use the following notations
\begin{itemize}
\item $x=(x^1,\tilde{x})$ when no ambiguity is possible, in order to single out the behavior on the first component. 
\item Similarly, we write $dx=dx^1 d\tilde x$ to shorten the notations.
\item In $\R^{d+1}$, the notation $(m,x)\in \R^{d+1}$ is the same as $(m,x^1, \tilde{x})$.
\item  Let us denote $C^k_c(\R^d)$ the functions $C^k$ class  with compact support, 
and $C^k_b(\R^d)$ the bounded functions $C^k$ class { with bounded derivatives}, for $k=1,2$.

\item Let us denote $p_V$    the density of the probability law of process $V_t=(M_t,X_t)$, whose existence has been established in \cite{coutin:pontier:2019}.

\item In \eqref{edp1m} we write $\frac{\partial \Phi}{\partial m} (m,m,\tilde{x})$ to mean $\frac{\partial \Phi}{\partial m} (m,x) \1_{m=x^1} = \frac{\partial \Phi}{\partial m} (m,x^1,\tilde{x}) \1_{m=x^1}$.

\item Throughout the paper, we fix a probability space $(\Omega, \mathcal{F},\PP)$ in which all our random variables are defined, and we use the notation $E$ for the expectation with respect to $\PP$. In Proposition \ref{lem-semi-groupe-dual}, we change probability with Girsanov's Theorem, and therefore, we will write $E_\PP$ and $E_\mathbb{Q}$ to denote the expectations with respect to $\PP$ and $\mathbb{Q}$ respectively.

\end{itemize}
\end{remark}

\begin{definition} 
\label{def-espace-sol}
Let $T>0.$ 
Define ${\cal X}$ to be the space  of { the { real}
functions on 
$]0,T]\times{\cal T}$} satisfying
 the following items:
\begin{itemize}
\item[(a)]
{ $\sup_{t \in {]}0,T]}
\int_{{\mathbb R}^d }\left[\int_{m\geq x^1}|p(m,x;t)|dm\right]^2 dx <\infty$}.


\item[(b)] The application 
$ (m,\tilde x;s)\mapsto p(m,m,\tilde x;s)\in L^1({]0,T]};L^2(\R^d))$

\item[(c)]
For all $t\in ]0,T],$  $(m,\tilde x)\mapsto \sup_{u>0}p(m,m-u,\tilde x;t)\in L^1(\R^d);$ 
{ and for  almost surely all $(m,\tilde{x}) \in {\mathbb R}^d,$ 
{ $x^1 \mapsto p(m,x^1,\tilde{x};t)$ is continuous on $]-\infty, m[$ } and  $\lim_{u\rightarrow 0^+} p(m,m-u,\tilde{x};t)$
 exists and is denoted by $ p(m,m,\tilde{x};t)$}.

%
\end{itemize}
\end{definition}

\begin{remark}

In the case of a smooth solution and in dimension 1, we can perform an integration by parts and reformulate PDE \eqref{edp1m} as:
$$
\begin{cases}
\partial_t p(m,x;t) = {\cal L}^*p(m,x;t)  & \mbox{ if } m >x \\
B(m)p(m,m;t) = \frac{1}{2}(\partial_1 + \partial_2)p(m,m;t) + \frac{1}{2} \partial_2 p(m,m;t) & \mbox{ if } m=x.
\end{cases}
$$
This PDE is non-standard in the sense that the operator ${\cal L}^*$ only acts on the variable $x\in \R$. 
The Dirichlet condition appears as a reformulation of the $\frac{\partial \Phi}{\partial m}$ term in PDE \eqref{edp1m} and is not a source term, as it is imposed by the PDE.

\end{remark}

\begin{theorem}
\label{mthm}
Assume that $B \in C^1_b(\R^d, \R^d).$
Let $T>0,$ and $f_0 \in L^1(\R^d)\cap L^2(\R^d)$ with positive values such that $\int_{\R^d} f_0(x) dx=1.$ Then, 
$p_V$ belongs to ${\cal X}$ and is the unique solution in ${\cal X}$ of~\eqref{edp1m} in a weak sense.
\end{theorem}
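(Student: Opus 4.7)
The statement combines two assertions, $p_V \in \mathcal{X}$ and weak uniqueness, which I would treat separately. For the membership, I would plug the series expansion of $p_V$ obtained in \cite{coutin:pontier:2020} into each item of Definition \ref{def-espace-sol} and check it termwise, using Gaussian bounds on the Brownian transition densities that appear after Girsanov's reduction: condition (a) from a uniform $L^2$ estimate on the $m$-marginal, condition (b) from $L^1_t L^2_{m,\tilde x}$ control of the diagonal trace, and condition (c) from the continuity in $x^1$ and the existence of the boundary limit at $x^1\to m^-$ that the series produces explicitly.

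\emph{Duality setup.} For uniqueness, let $p_1, p_2 \in \mathcal{X}$ both satisfy \eqref{edp1m} with datum $f_0$ and set $q := p_1 - p_2$. The initial term drops out, and after approximating a time-dependent test function $\Phi(s,m,x)$ by step functions in $s$ and iterating the weak formulation on each subinterval, one obtains
\begin{equation*}
\int_{\mathcal{T}}\Phi(t,m,x)\,q(m,x;t)\,dm\,dx = \int_0^t\!\int_{\mathcal{T}}(\partial_s+\mathcal{L})\Phi(s,m,x)\,q(m,x;s)\,dm\,dx\,ds + \tfrac{1}{2}\int_0^t\!\int_{\R^d}\partial_m\Phi(s,m,m,\tilde x)\,q(m,m,\tilde x;s)\,dm\,d\tilde x\,ds.
\end{equation*}
It is therefore enough, given a target $\Psi$ in a dense subset of $L^2(\mathcal{T})$, to exhibit $\Phi$ on $[0,t]\times\mathcal{T}$ solving
\begin{equation*}
(\partial_s+\mathcal{L})\Phi = 0 \text{ on } \{m>x^1\},\qquad \partial_m\Phi(s,m,m,\tilde x)=0,\qquad \Phi(t,\cdot)=\Psi,
\end{equation*}
to get $\int_{\mathcal{T}}\Psi\,q(\cdot;t)\,dm\,dx=0$ and, by density, $q(t,\cdot)\equiv 0$.

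\emph{Dual test function via the semigroup and Girsanov.} The natural candidate is the backward semigroup of $(M,X)$,
\begin{equation*}
\Phi(s,m,x) := E_{\PP}\Bigl[\Psi\bigl(m\vee \sup\nolimits_{v\le t-s}\hat X^{1,x}_v,\ \hat X^x_{t-s}\bigr)\Bigr],
\end{equation*}
where $\hat X^x$ solves $d\hat X_u = B(\hat X_u)\,du + dW_u$ with $\hat X_0 = x$. By the Markov property, $\Phi$ satisfies the backward Kolmogorov equation on $\{m>x^1\}$. The Neumann condition on the diagonal holds because, when $x^1 = m$, the first coordinate $\hat X^1$ exceeds $m$ on every right neighborhood of $0$ almost surely, so the operation $m\vee\cdot$ is insensitive to small perturbations of $m$ at the diagonal. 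To produce the $C^{1,2}$ regularity required for $\Phi$ to serve as a valid test function, I would follow the scheme indicated by Proposition \ref{lem-semi-groupe-dual}: apply Girsanov to pass to a measure $\mathbb{Q}$ under which $\hat X$ is a Brownian motion, represent $\Phi$ via the explicit Gaussian reflection kernel, differentiate under the expectation to read off smoothness and the vanishing of $\partial_m\Phi$ on the diagonal, and finally transfer estimates back to $\PP$ exploiting that $B\in C^1_b$ makes the Radon--Nikodym density well controlled.

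\emph{Main obstacle.} The decisive technical step is the regularity and boundary behavior of $\Phi$: producing a function smooth enough on $\{m>x^1\}$ that extends continuously to the diagonal with $\partial_m\Phi|_{m=x^1}=0$, and ensuring that $\int \partial_m\Phi(s,m,m,\tilde x)\,q(m,m,\tilde x;s)\,dm\,d\tilde x$ is absolutely convergent using only condition (b) of Definition \ref{def-espace-sol}. The explicit reflection kernel combined with the Girsanov factor is what makes these estimates possible; handling them in arbitrary dimension $d$ with a general bounded drift, and in particular propagating the diagonal vanishing through the change of measure, is where the bulk of the analytic work lies.
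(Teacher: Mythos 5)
Your plan for the membership part ($p_V\in{\cal X}$ via the Gaussian domination of $p_V$ and termwise checks on the representation from \cite{coutin:pontier:2020}) is essentially the paper's argument. For uniqueness, however, you take a genuinely different route — a duality argument, exhibiting for each terminal datum $\Psi$ a solution $\Phi$ of the backward Kolmogorov problem of the pair $(M,X)$ with the Neumann condition $\partial_m\Phi=0$ on the diagonal — and this is where there is a genuine gap. The entire argument hinges on producing a dual test function that is admissible in the weak formulation, i.e.\ (after extension beyond ${\cal T}$ and truncation) of class $C^2$ in $x$, $C^1$ in $m$, with the boundary derivative vanishing, and your sketch does not deliver this. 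The suggested mechanism — Girsanov to remove the drift, represent $\Phi$ by the explicit reflection kernel, then ``differentiate under the expectation'' — does not work as stated: after the change of measure the integrand contains the stochastic exponential, a functional of the whole path of $\hat X$, so smoothness of $\Phi$ in $(m,x)$ cannot be read off from the Gaussian kernel alone; recovering it requires Malliavin-type integration by parts, which is precisely the heavy machinery of \cite{coutin:pontier:2019,coutin:pontier:2020}, or else a classical solvability theory for this degenerate parabolic problem with a Neumann condition in a direction in which the operator has no diffusion — exactly the non-standard feature the paper emphasizes is not covered by textbook results. In other words, the ``decisive technical step'' you flag is not a routine estimate to be filled in later; it is a result of comparable depth to the theorem itself, and without it the proof does not close. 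Two further unaddressed points: the weak formulation \eqref{edp1m} is stated for time-independent $\Phi\in C^2_c(\R^{d+1})$, so both the passage to time-dependent test functions and the extension/cut-off of your $\Phi$ (defined only on $\bar{\cal T}$, not compactly supported; note that the natural extension $\Phi(m\vee x^1,x)$ is $C^1$ across the diagonal thanks to the Neumann condition but need not be $C^2$ in $x^1$ there) require justification using only the integrability encoded in ${\cal X}$.

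By contrast, the paper's proof avoids any regularity for a dual problem: it integrates the $m$-variable against a fixed $H$ to form $q_H$, shows $q_H$ solves a genuine parabolic equation on $L^2(\R^d)$ with source $\tfrac12 H'(x^1)q(x^1,x^1,\tilde x;\cdot)$, invokes Ball's theorem to get the mild representation through the adjoint semigroup with kernel $\Gamma$ (Gaussian estimates of Garroni--Menaldi), localizes on the diagonal with an approximate identity to obtain the closed Volterra equation \eqref{qonDiag} for the trace $q(y^1,y^1,\tilde x;t)$, and kills it by iterating the Gaussian bound on $\partial_{x^1}\Gamma_1$ (Gamma-function convolution estimates). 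If you want to pursue your duality route, the honest statement of what remains is: existence of $C^{1,2}$ solutions, up to the boundary, of the degenerate backward Neumann problem for general $B\in C^1_b$ in dimension $d$ — a result you would have to prove, not cite.
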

{
 It should be emphasized that this
 PDE is set in dimension $d+1$, but it is
  degenerate, in the sense that there are no derivative related to the component
    $m.$ Moreover, the presence of a boundary condition takes us away from the classical equations investigated in textbooks such as Evans \cite{evans}.}

During the preparation of this work, we became aware of \cite{Frikha:li:2020}, in which the authors skillfully apply the parametrix technique to derive a series expansion (the same as \cite{coutin:pontier:2020}), density estimates and prove uniqueness for the martingale problem in a class of one-dimensional path-dependent SDEs. 
A key insight of \cite{Frikha:li:2020} is the use of appropriate transmission conditions to compute the generator, with these conditions satisfying a convolution-like property relative to a certain measure. Although \cite{Frikha:li:2020} focuses on the one-dimensional case, their model accounts for a diffusion coefficient. We believe that their arguments could extend to higher dimensions, albeit with more complex notations and technical challenges.

However, a key distinction between \cite{Frikha:li:2020} and our work lies in the treatment of the martingale problem, which provides weak uniqueness but implicitly assumes that the solution is a probability measure. In contrast, our proof does not require non-negativity. Additionally, building on the results from \cite{coutin:pontier:2020}, our boundary condition naturally arises from Itô’s formula.

Moreover, while the parametrix technique can be applied to analyze the regularity of the density, Frikha and Li 
\cite{Frikha:li:2020} primarily focus on the well-posedness of the martingale problem. In contrast, our paper, viewed as a continuation of \cite{coutin:pontier:2019, coutin:pontier:2020}, recovers the regularity of the density in dimension one, as a consequence of uniqueness.


%

This paper is organized as follow. In Section \ref{unique}, we prove that the solution of \eqref{edp1m} with fixed initial condition is unique in a certain class of function ${\cal X}$. 
To achieve this, we deduce from the PDE \eqref{edp1m} an auxiliary parabolic problem, with source term coming from our boundary condition. We then use Ball’s Theorem \cite{ball:1977} on the solution of that parabolic problem and using a localization argument, we show that the trace term is solution of a Volterra integro-differential equation with zero initial condition.
The fact that our auxiliary parabolic problem falls into the framework of Ball's Theorem \cite{ball:1977} is checked in Section \ref{checkingBall}. 
Then Section \ref{pVdansX} proves that $p_V$ is an element of set ${\cal X}.$ Finally 
an appendix with some tools is provided.

\section{Uniqueness for weak solution of PDE in the set ${\cal X}$}
\label{unique}
The aim of this section is to prove the following theorem.
\begin{theorem}
\label{unique0}  Let $T > 0$ and $B \in C^1_b(\mathbb{R}^d, \mathbb{R}^d)$. In the space $\mathcal{X}$ defined in Definition \ref{def-espace-sol}, the PDE \eqref{edp1m} with initial condition $f_0 \in L^1(\mathbb{R}^d) \cap L^2(\mathbb{R}^d)$ has at most one solution.
\end{theorem}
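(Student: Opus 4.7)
By linearity of \eqref{edp1m} in $p$, it suffices to show that if $q := p_1 - p_2 \in \mathcal{X}$ satisfies the PDE with vanishing initial condition, then $q \equiv 0$. A crucial observation is that the operator $\mathcal{L}$ does not differentiate the $m$-variable, so after restricting to test functions of product form $\Phi(m,x)=\phi(m)\psi(x)$ the PDE decouples: for each admissible $\phi$, it becomes a weak evolution equation in the $x$-variable on $\{x^1 \le m\}$, parametrised by $m$, of the form
\[
\partial_t q(m,\cdot;t) \;=\; \mathcal{L}^\ast q(m,\cdot;t) \,+\, \tfrac{1}{2}\,\mu^{(m)}_t,
\]
where the source $\mu^{(m)}_t$ is the distribution on $\mathbb{R}^d$ coming from the boundary term $\partial_m\Phi(m,m,\tilde x)\,q(m,m,\tilde x;s)$: a measure supported on the hyperplane $\{x^1=m\}$ and proportional to the trace $h(m,\tilde x;s) := q(m,m,\tilde x;s)$. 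This trace is well-defined by Definition~\ref{def-espace-sol}(c) and belongs to $L^1(]0,T];L^2(\mathbb{R}^d))$ by (b), so the above auxiliary parabolic problem makes sense in a weak/distributional framework.

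The plan is then to invoke Ball's theorem \cite{ball:1977}, which identifies weak solutions of $u'(t) = A u(t) + f(t)$ with mild solutions, provided $A$ generates a $C_0$-semigroup on a Hilbert space. Since $B \in C^1_b(\mathbb{R}^d,\mathbb{R}^d)$, the operator $\mathcal{L}^\ast = \tfrac12 \Delta_x - \mathrm{div}_x(B\cdot)$ is a bounded perturbation of the heat generator and hence generates a strongly continuous semigroup $(S_t)_{t\geq 0}$ on $L^2(\mathbb{R}^d)$. The hypotheses of Ball's result are then routine to check, and I would defer this to Section~\ref{checkingBall}. Applied with zero initial datum, Ball's theorem yields the Duhamel representation
\[
q(m,x;t) \;=\; \tfrac{1}{2}\int_0^t \bigl(S_{t-s}\,\mu^{(m)}_s\bigr)(x)\, ds,
\]
which reduces the uniqueness question to showing that the trace $h$ vanishes.

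The final step is to recover a closed equation for $h$ by passing to the limit $x^1 \to m^-$ in the Duhamel formula. Because $S_{t-s}$ has a smooth Gaussian-type kernel, evaluating $S_{t-s}\mu^{(m)}_s$ on the diagonal $\{x^1=m\}$ produces a convolution against $h(m,\cdot;s)$, and yields a Volterra integral equation
\[
h(m,\tilde x;t) \;=\; \int_0^t\!\!\int_{\mathbb{R}^{d-1}} \kappa(t-s;m,\tilde x,\tilde y)\, h(m,\tilde y;s)\, d\tilde y\, ds,
\]
with a kernel $\kappa$ inheriting the standard boundary singularity $\kappa(r;\cdot) = O(r^{-1/2})$ from the first $x^1$-derivative of the Gaussian on the diagonal. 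This singularity is time-integrable, so the classical Volterra iteration (or a Gronwall-type lemma for weakly singular kernels) forces $h \equiv 0$, and substituting back in the Duhamel formula gives $q \equiv 0$. The main obstacle I anticipate is the rigorous execution of this trace procedure: one must control the singular kernel in the $L^2$-based norm provided by Definition~\ref{def-espace-sol}(a)--(b), and the localization argument alluded to in the outline is designed precisely to tame the non-integrable $m$-dependence and make the Volterra iteration converge uniformly in $m$.
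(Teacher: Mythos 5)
Your overall strategy (reduce to a parabolic problem whose source comes from the boundary term, apply Ball's theorem to get a Duhamel formula, pass to the diagonal, and kill the trace by a Volterra iteration with a weakly singular kernel) is the same as the paper's, but the specific reduction you propose contains a genuine error. The boundary term in \eqref{edp1m} is $\frac12\int_0^t\int_{\R^d}\partial_m\Phi(m,m,\tilde x)\,q(m,m,\tilde x;s)\,dm\,d\tilde x\,ds$: it carries the $m$-derivative of the test function, not the test function itself. Consequently the equation does \emph{not} disintegrate in $m$ into, for each fixed $m$, a parabolic equation for $q(m,\cdot;t)$ on $\{x^1\le m\}$ with source $\frac12\,q(m,m,\tilde x;s)\,\delta_{\{x^1=m\}}$; testing with $\Phi(m,x)=\phi(m)\psi(x)$ produces $\phi'(m)$ in the boundary term, so a formal slicing in $m$ would instead generate a distributional $m$-derivative of the surface measure, which couples the slices and is one order more singular than what you wrote. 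Moreover, even granting your decoupled equation, Ball's theorem as stated (Theorem \ref{ball}) requires the source $f$ to lie in $L^1([0,T],X)$ with $X=L^2(\R^d)$; a measure supported on the hyperplane $\{x^1=m\}$ is not an element of $L^2(\R^d)$, so the hypotheses you "defer to Section \ref{checkingBall}" would in fact fail in your formulation. The missing key idea is the paper's definition \eqref{def:q_h}, $q_H(x;t)=\int_{m\ge x^1}H(m)\,q(m,x;t)\,dm$: smearing in $m$ against $H$ over the $x^1$-dependent half-line transfers the $m$-derivative onto $H$, and the boundary term becomes the genuine $L^2$ source $\frac12 H'(x^1)\,q(x^1,x^1,\tilde x;t)$ (Proposition \ref{equation_qH}), which is exactly what makes Ball's theorem applicable and yields \eqref{prelim:H}.

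The diagonal step also needs more than you indicate, and its outcome differs from your anticipated Volterra equation. Starting from \eqref{prelim:H}, the paper localizes by replacing $H$ with $\chi_\varepsilon(\cdot)H(\cdot)$, where $\chi_\varepsilon$ is an approximate identity centred at $x^1$; differentiating the product creates two terms. The term carrying $H'$ and the kernel $\Gamma$ evaluated on the diagonal — essentially the kernel you predict, with its $O((t-s)^{-1/2})$ singularity — is shown to \emph{vanish} (Lemma \ref{lem2}, via the choice $H(m)=\int_m^{+\infty}\Psi$ and Item (a) of Definition \ref{def-espace-sol}). The surviving term comes from $\chi_\varepsilon'$, and after an integration by parts in $x^1$ (which kills the symmetric Gaussian part $\Gamma_0$ and leaves only $\partial_{x^1}\Gamma_1$) one obtains the closed equation \eqref{qonDiag}, whose kernel is $\partial_{x^1}\Gamma$ on the diagonal with the Garroni--Menaldi bound $t^{\alpha/2-1}\phi_{d-1}$. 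Your final iteration argument is then sound in spirit, but it must be run with this kernel and with the $L^2$-in-space, $L^1$-in-time control of Items (a)--(b), as in Lemmas \ref{lemme3Laure} and \ref{lemmeLaure2}; as written, your proposal neither establishes the Volterra equation it iterates nor identifies the correct kernel.
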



{ In this section, we develop the tools needed to establish uniqueness for the solution of the PDE \eqref{edp1m}. 
Uniqueness is understood in a weak sense: if $p_1$ and $p_2$ are two weak solutions of \eqref{edp1m} belonging to ${\cal X}$, then they are equal almost everywhere. 
To do so, we consider $p_1$ and $p_2$, two weak solutions to \eqref{edp1m} in ${\cal X}$ with the same initial condition, and show that their difference is zero almost surely.
Having the same initial condition, for $q=p_1-p_2$, we derive the following equation:
\begin{eqnarray}
\label{prep:EDP}
\int_{\cal T}\Phi(m,x)q(m,x;t)dmdx &=&
\int_0^t\int_{\cal T} q(m,x;s){\cal L}\Phi(m,x)dmdxds\nonumber
\\
&+ &{ \frac{1}{2}}
\int_0^t\int_{\R^d} \frac{\partial \Phi}{\partial m}(m,x )\textbf{1}_{m=x^1}  q(m,m,\tilde x;s)dmd\tilde xds
\end{eqnarray}
for all test function $\Phi \in C^{{ 2}}_c(\R^{d+1}).$
We can chose in particular $\Phi(m,x)=H(m)F(x)$, for $(m,x)\in {\cal T},$ where  $H\in C^1_c(\R)$ and $F\in C^2_c(\R^d)$. And since there is no second derivative with respect to $m,$ we introduce 
\begin{equation}
\label{def:q_h}
q_H(x;t) := \int_{m \ge x^1} H(m) q(m,x;t) dm,~~x \in \R^d,~~t\in [0,T]
\end{equation}
where $H\in C^1_b(\R)$.

The rest of the proof is as follows.
We show in Proposition \ref{equation_qH} that $q_H$ is solution to a parabolic PDE with a source term on $L^2(\R^d)$. Using Ball theorem in $L^2$, we prove equation \eqref{prelim:H} below to get the following representation for $q_H$:
$$
q_H(x;t)=\frac12 \int_0^t \int_{\R^d} H'(y^1) q(y^1, y;s) \Gamma(x,y;t-s) ds dy.
$$
Using a density argument,  we localize on the diagonal and derive \eqref{qonDiag}:
$$
q(y^1,y^1,\tilde{x};t)   { =-}  \frac12 \int_0^t \int_{\R^{d-1}}\partial_{x^1} \Gamma(y^1,\tilde{x},y^1,\tilde{y},t-s)q(y^1,y^1,\tilde{y};s) d \tilde{y} ds,
$$
where $\Gamma$ is a kernel defined in \eqref{rep-gamma-gar}.
The estimates on the kernel $\Gamma$ then show that $q$ is zero almost everywhere.


\subsection{The function $q_H$ is solution of a parabolic partial differential equation}

In this section, we derive from \eqref{edp1m} the auxiliary parabolic problem \eqref{edp-qH} with a source term, whose solution is obtained by applying a result from Ball \cite{ball:1977}. We state this as the following proposition.
\begin{prop}
\label{equation_qH}
Let $q \in {\mathcal X}$ be a solution of \eqref{edp1m} with a null initial condition, and let $H \in C^{2}_{b}(\mathbb{R}, \mathbb{R})$ with a compactly supported derivative. The function $q_H(\cdot; \cdot)$ satisfies the equation in a weak sense:
\begin{align}\label{edp-qH}
q_H(x;t) = \int_0^t {\cal L}^* q_H(x;s) ds + \frac12 \int_0^t  H'(x^1) q(x^1,x^1,\tilde{x};s) ds,~~t\in [0;T]
\end{align}
where ${\cal L}^*$ is
\begin{align*}
{\mathcal L}^*(\Phi) (x)= -B(x)\cdot\nabla \Phi(x) { -} \Phi(x) {\rm div} B(x) + {\frac12}\Delta \Phi (x).
 \end{align*}
 { Moreover,~~for all $t\in [0,T],$ $q_H(.;t) \in L^2(\R^d).$}
\end{prop}
%
\begin{proof}

Let $H\in C_b^2({\mathbb R},{\mathbb R})$ with compactly supported derivative.
Let $q \in {\mathcal X}$ such that $q(., 0)=0$.

We start by proving that 
$ q_H(.;t) \in L^2({\mathbb R}^d),$ for all $t\in [0,T]. $
\\
Recall from Item (a) of Definition \ref{def-espace-sol} that $$\sup_{t \in ]0,T]}\int_{\R^d}\left[\int_{\R} |q(m,x;t)| dm\right]^2 dx <\infty,$$
and the fact that $H$ is bounded yields
\begin{equation}\label{borne:H:q}
\int_{\R^d} \left[ \int_{\R} |H(m) | |q(m,x;t) | dm \right]^2 dx <\infty,~~\forall t >0.
\end{equation}
Hence $q_H (.;t) \in L^2(\R^d).$ 

Next, we proceed with the proof of \eqref{edp-qH}. The expression \eqref{edp-qH} in a "weak sense" means that it should be understood as follows: for all functions $F \in C^2_c({\mathbb R}^d, {\mathbb R})$ and for all $t \in [0, T]$,
\begin{align}\label{edp-qh-test}
  \int_{{\mathbb R}^d} F(x) q_H(x; t) \, dx &= \int_0^t \int_{{\mathbb R}^d} {\mathcal L} F(x) q_H(x; s) \, dx \, ds \nonumber\\ & + \frac{1}{2} \int_0^t \int_{{\mathbb R}^d} H'(m) F(m, \tilde{x}) q(m, m, \tilde{x}; s) \, dm \, d\tilde{x} \, ds.
\end{align}
We start from Equation \eqref{prep:EDP} satisfied by $q$, and substitute for $F \in C^2_c(\mathbb{R}^d)$:
$$\Phi(m, x) = H(m) F(x) \mathbf{1}_{x^1 \le m}.$$ 
\\
(i) \underline{Left-hand side}: Since $F$ has compact support and $H$ is bounded, we can apply Fubini's Theorem to exchange the order of integration (recall equation \eqref{borne:H:q} to justify the finiteness of the integrals). The left-hand side then becomes:

\begin{eqnarray*}
\int_{\cal T}\Phi(m,x)q(m,x;t)dmdx &=& \int_{\R^d} F(x)  \left( \int_{m \ge x^1} H(m) q(m,x;t) dm \right) dx\\
&=& \int_{\R^d} F(x) q_{H}(x;t) dx.
\end{eqnarray*}
(ii) \underline{Right hand side}: Now considering the right hand side of \eqref{prep:EDP}, we have for the first contribution:
$$
\int_0^t\int_{\cal T} q(m,x;s){\cal L}\Phi(m,x)dmdxds = \int_0^t\int_{\cal T} q(m,x;s){\cal L}\Big( H(m) F \Big)(x) dmdxds.
$$
Note that the operator ${\cal L}$ acts only on  the variable $x$. We thus have:
\begin{eqnarray*}
\int_0^t\int_{\cal T} q(m,x;s){\cal L}\Phi(m,x)dmdxds 
&=& \int_0^t\int_{\R^d} \int_{m \ge x^1} q(m,x;s) H(m) {\cal L} F (x) dmdxds \\
&=&\int_0^t \int_{\R^d} q_H(x;s) {\cal L} F(x) dx ds .
\end{eqnarray*}
(iii) Finally we consider the second term in the right hand side of \eqref{prep:EDP} , since 
$$ \frac{\partial \Phi}{\partial m}(m,m,\tilde x ) = \frac{\partial \Phi}{\partial m}  (m,x^1,\tilde{x})\textbf{1}_{m=x^1},$$ 
meaning: we first compute the derivative of $\Phi$ with respect to the first component, and evaluate at $(m,m,\tilde x) \in \R \times \R^d$. The 
second term in the right hand side of \eqref{prep:EDP} becomes  up to the factor $\frac12$:

\begin{eqnarray*}
&&\int_0^t\int_{\R^d} \frac{\partial}{\partial m}\Phi(m,m,\tilde x ){  q(m,m,\tilde x;s)}dmd\tilde xds\\
&=& \int_0^t\int_{\R^d} H'(m) F(m,\tilde x){ q(m,m,\tilde x;s)}dmd\tilde xds
\\
&=&\int_{\R^d} \left( \int_0^t H'(m) q(m,m,\tilde x;s) ds \right) F(m,\tilde{x}) dmd\tilde x \\
&=& \int_0^t\int_{\R^d} H'(m) F(m,\tilde x){ q(m,m,\tilde x;s)}dmd\tilde xds.
\end{eqnarray*}
 %

Consequently, we have proved that $q_H$ satisfies   equation \eqref{edp-qh-test}:
 \begin{align*}
 \int_{\R^d} F(x) q_H(x;t) dx= \int_0^t \int_{\R^d} {\mathcal L}F(x) q_H(x;s) dxds
 +\frac12\int_0^t \int_{\R^d} H'(x^1){ F(x)}q(x^1,x;s)dxds.
 \end{align*}

 \end{proof}

\begin{remark}
We prove in Proposition  \ref{lem-semi-groupe-dual} below that the
adjoint operator 
$${\cal L}^*f (x) =  -B(x) \cdot \nabla f(x)-f(x){ \sum_{k=1}^d\partial_{x^k} B^k(x)}+{\frac12}\Delta f(x), $$
 is a densely defined closed operator and is the  generator of the semi-group 
 $Q$  with kernel $\Gamma$:
\begin{align}
\label{semi-groupe-dual}
 Q_t(f)(x):=\E\left[f(X^x_t)\exp\left(-\int_0^t\sum_{k=1}^d \partial_{x^k} B^k(X^x_u)du\right)\right]=\int f(y)\Gamma(x,y;t)dy
\end{align}
for $0 \le t \le T$, $x\in \R^d$ and $f \in L^2(\R^d)$ . 
The kernel $\Gamma$ has Gaussian estimates (see Garroni and Menaldi \cite{garroni:menaldi:1992}).

\end{remark}

Now, let us quote the following theorem 
due to Ball \cite{ball:1977}.

\begin{theorem} 
\label{ball}
Let $A$ a densely defined closed linear operator on a Banach space $X$ which generates a strongly continuous semi-group $Q$
on $X$ { of bounded linear operators $Q_t$ bounded on $X$}. Let $f$ belonging to $L^1([0,T], X)$ and $u\in C([0,T],X)$  be a weak solution of the PDE:
\begin{equation}
\label{ball:eq}
u'(t)=Au(t)+f(t),~t\in (0,T],~~u(0)=x\in D(A)\subset X,
\end{equation}
where $D(A)$ is the domain of operator $A$ In that case, the unique  solution of \eqref{ball:eq} is expressed as
\begin{equation}
\label{ballSol}
u(t)=Q(t)x+\int_0^t Q(t-s)f(s)ds,~\forall t\in [0,T].
\end{equation}	
\end{theorem}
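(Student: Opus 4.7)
The plan is to prove that the variation-of-constants formula \eqref{ballSol} gives the unique weak solution of \eqref{ball:eq}. First, I would introduce the candidate $w(t) := Q(t)x + \int_0^t Q(t-s)f(s)\,ds$. This belongs to $C([0,T],X)$: the first term is continuous by strong continuity of $Q$, and the convolution is continuous because $f \in L^1([0,T],X)$ together with $\sup_{t \in [0,T]} \|Q_t\|_{\mathcal{L}(X)} < \infty$ (uniform boundedness principle). Since $x \in D(A)$, the map $t \mapsto Q(t)x$ is even strongly $C^1$, so verifying that $w$ is a weak solution reduces to checking, for every $v \in D(A^*)$, the identity $\langle w(t),v\rangle - \langle x,v\rangle = \int_0^t \langle w(s),A^*v\rangle\,ds + \int_0^t \langle f(s),v\rangle\,ds$, which follows from Fubini after expanding and using $\frac{d}{dt}\langle Q(t)x,v\rangle = \langle Q(t)x, A^*v\rangle$.

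For uniqueness, let $u_1, u_2$ be two weak solutions and set $u := u_1 - u_2$. Then $u \in C([0,T],X)$ is a weak solution of the homogeneous equation with $u(0) = 0$. Fix $t \in (0,T]$ and $v \in D(A^*)$, and introduce the auxiliary function $\phi(s) := \langle u(s), Q^*(t-s)v\rangle$ on $[0,t]$. The key ingredients are: (i) $D(A^*)$ is $Q^*$-invariant, so $Q^*(t-s)v \in D(A^*)$ for all $s$; (ii) the map $s \mapsto Q^*(t-s)v$ is continuously differentiable (in a suitable dual sense) with derivative $-A^*Q^*(t-s)v$; and (iii) by the weak-solution identity applied with test vector $\psi = Q^*(t-s)v$, the function $s \mapsto \langle u(s),\psi\rangle$ is absolutely continuous with derivative $\langle u(s), A^*\psi\rangle$. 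A product-rule computation then yields $\phi'(s) = 0$ a.e., hence $\phi(t) = \phi(0)$, i.e. $\langle u(t),v\rangle = \langle u(0), Q^*(t)v\rangle = 0$. Since $A$ is densely defined and closed, $D(A^*)$ is weak*-dense in $X^*$ and therefore separates points of $X$, forcing $u(t) = 0$.

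The main obstacle is the rigorous justification of the product rule for $\phi$: the map $s \mapsto u(s)$ is only weakly absolutely continuous, not strongly differentiable, while $s \mapsto Q^*(t-s)v$ is $C^1$ only in a dual topology. The standard workaround is to regularize in time, setting $u_\varepsilon(s) := \int \rho_\varepsilon(s-\sigma)u(\sigma)\,d\sigma$ for a mollifier $\rho_\varepsilon$, after first extending $u$ continuously to a slightly larger interval. Then $u_\varepsilon$ is strongly $C^1$ and ordinary calculus applies to $\phi_\varepsilon(s) := \langle u_\varepsilon(s), Q^*(t-s)v\rangle$; passing $\varepsilon \to 0$ is controlled by strong continuity of $Q^*$ on $D(A^*)$ together with dominated convergence, recovering $\phi(t) = \phi(0)$ in the limit. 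An alternative route, avoiding mollification, is to first establish the mild formula for $f$ in the dense subclass $C^1([0,T], D(A))$ (where $w$ is a classical solution and the identity is immediate) and then extend by density in $L^1([0,T],X)$, using the $L^1$-continuity of the convolution with $Q$.
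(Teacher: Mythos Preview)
The paper does not prove this theorem; it simply quotes it as a known result due to Ball \cite{ball:1977} and then applies it to the function $q_H$. There is therefore no ``paper's own proof'' to compare against.

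Your sketch is essentially Ball's original argument and is correct in outline. The existence part (that $w(t)=Q(t)x+\int_0^t Q(t-s)f(s)\,ds$ is a weak solution) is standard. For uniqueness, the dual pairing $\phi(s)=\langle u(s),Q^*(t-s)v\rangle$ with $v\in D(A^*)$ is exactly the device Ball uses; you have correctly identified the delicate point, namely that $u$ is only weakly absolutely continuous while $s\mapsto Q^*(t-s)v$ is differentiable only in a weak* sense, and your mollification workaround is a legitimate way to close this gap. One small remark: the claim that $D(A^*)$ separates points of $X$ does not need $A$ to be closed per se; it follows here because $A$ generates a $C_0$-semigroup, so that the weak* integrals $h^{-1}\int_0^h Q^*(s)x^*\,ds$ lie in $D(A^*)$ and converge weak* to any $x^*\in X^*$, giving weak* density. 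Your alternative route via density of $C^1([0,T],D(A))$ in $L^1([0,T],X)$ is also viable and is in fact closer to the way many textbook treatments handle the variation-of-constants formula.
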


We will verify that the assumptions of Ball’s theorem are satisfied in our setting in Section~\ref{checkingBall} below. We apply Ball’s theorem with the following choices: 
$X := L^2(\mathbb{R}^d)$, $A := {\cal L}^*$, $u(t) := q_H(\cdot; t)$, $f : t \mapsto \frac{1}{2} H'(x^1) q(x^1, x^1, \tilde{x}; t)$, and $x = 0$.
The conclusion of the theorem in our context is stated as follows:

\begin{corollary} 
For all $H \in C^2_b(\mathbb{R})$ with compactly supported derivatives and for all $t \in [0, T]$, $dx$-almost surely,
\begin{equation}
\label{prelim:H}
q_H(x; t) = \int_\mathbb{R} H(m) q(m, x; t) \, dm = \frac{1}{2} \int_0^t \int_{\mathbb{R}^d} H'(y^1) q(y^1, y; s) \Gamma(x, y; t - s) \, ds \, dy,
\end{equation}
where $\Gamma$ is the kernel defined in \eqref{semi-groupe-dual}.
\end{corollary}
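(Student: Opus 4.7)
The plan is to apply Ball's Theorem \ref{ball} to the abstract Cauchy problem derived in Proposition \ref{equation_qH}. With the choices $X := L^2(\R^d)$, $A := {\cal L}^*$, initial datum $u(0) = 0 \in D({\cal L}^*)$, source term $f(s)(x) := \frac{1}{2} H'(x^1) q(x^1, x^1, \tilde{x}; s)$, and unknown $u(t) := q_H(\cdot\,;t)$, Proposition \ref{equation_qH} identifies \eqref{edp-qH} with the integrated form of $u'(t) = A u(t) + f(t)$. Once the hypotheses of Ball's theorem are verified, the variation-of-constants formula \eqref{ballSol} together with the kernel representation \eqref{semi-groupe-dual} of the semigroup $Q$ will immediately yield the announced identity.

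The first thing to check is that ${\cal L}^*$ is densely defined and closed on $L^2(\R^d)$, and generates a strongly continuous semigroup $Q$ of bounded operators given by the kernel $\Gamma$; this is exactly the content of Proposition \ref{lem-semi-groupe-dual}, which I invoke. Second, I must verify that $f \in L^1([0,T]; L^2(\R^d))$. Since $H'$ is bounded and compactly supported,
$$\|f(s)\|_{L^2(\R^d)} \leq \frac{1}{2}\|H'\|_\infty \left(\int_{\R^d} |q(x^1,x^1,\tilde{x};s)|^2 \, dx\right)^{1/2},$$
and integrating in $s$ and invoking Item (b) of Definition \ref{def-espace-sol}, which precisely asserts that $(m,\tilde{x};s) \mapsto q(m,m,\tilde{x};s)$ belongs to $L^1([0,T]; L^2(\R^d))$, gives the required bound. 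Third, I need $u \in C([0,T]; L^2(\R^d))$: Item (a) of Definition \ref{def-espace-sol} combined with the boundedness of $H$ yields the uniform-in-$t$ bound $\sup_{t \in (0,T]} \|q_H(\cdot; t)\|_{L^2} < \infty$, while testing \eqref{edp-qh-test} against any $F \in C^2_c(\R^d)$ gives weak-$L^2$ continuity of $t \mapsto q_H(\cdot; t)$; together with the uniform bound and the Bochner integrability of the source, this produces the desired strong continuity.

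With all hypotheses in place, Ball's theorem gives $u(t) = Q_t(0) + \int_0^t Q_{t-s}(f(s)) \, ds = \int_0^t Q_{t-s}(f(s)) \, ds$ since $Q_t(0) = 0$. Substituting $Q_r g(x) = \int_{\R^d} g(y) \Gamma(x,y;r) \, dy$ from \eqref{semi-groupe-dual} then produces
$$q_H(x;t) = \frac{1}{2} \int_0^t \int_{\R^d} H'(y^1) q(y^1, y^1, \tilde{y}; s) \Gamma(x, y; t-s) \, dy \, ds,$$
which is the stated formula, with the shorthand $q(y^1, y; s) = q(y^1, y^1, \tilde{y}; s)$ justified by Item (c) of Definition \ref{def-espace-sol}. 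The only genuinely delicate points are the $L^1_t L^2_x$ integrability of the source and the $L^2$-continuity of $q_H$, both of which rely on the regularity packaged into the space $\mathcal{X}$; the rest is a direct application of semigroup theory whose ingredients are supplied elsewhere in the paper.
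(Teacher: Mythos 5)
Your proposal is correct and follows essentially the same route as the paper: Ball's Theorem \ref{ball} with $X=L^2(\R^d)$, $A={\cal L}^*$, zero initial datum and source $\frac12 H'(x^1)q(x^1,x^1,\tilde x;\cdot)$, with the generation/kernel facts taken from Proposition \ref{lem-semi-groupe-dual}, the $L^1([0,T];L^2)$ bound on the source from Item (b), and the continuity of $t\mapsto q_H(\cdot;t)$ obtained exactly as in Lemma \ref{contqH} (uniform $L^2$ bound via Item (a) plus weak continuity from testing \eqref{edp-qh-test}). This is precisely the verification carried out in Section \ref{checkingBall}, so nothing essential is missing.
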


The existence of $\Gamma$ is established in Proposition \ref{lem-semi-groupe-dual} below.

From here, we proceed to show that $q = 0$ almost surely. The main challenge in the calculation is relating $q(m, x; t)$ on the left to its value on the diagonal $m = x^1$, which is the quantity appearing on the right-hand side of \eqref{prelim:H}. To establish this connection, we employ a localization argument in the next section, leveraging a carefully selected test function $H$.

\subsection{Localisation on the diagonal}
We can prove the following result:
%

\begin{prop}
\label{identite:q:prelim}
{ Let $B \in C^1_b(\R^d,\R^d)$ and $q\in {\mathcal X}$ be a solution of \eqref{edp1m} with null initial condition.}
The following identity holds
{ $\forall t\in [0,T]$ and $dy^1 d\tilde{ x}$ almost surely}:
 \begin{equation}
 \label{qonDiag}
q(y^1,y^1,\tilde{x};t)   { =-}  \frac12 \int_0^t \int_{\R^{d-1}}\partial_{x^1} \Gamma(y^1,\tilde{x},y^1,\tilde{y};t-s)q(y^1,y^1,\tilde{y};s) d \tilde{y} ds.
\end{equation}

\end{prop}

We break down the proof of the above identity in two steps. 

{\bf First step:} Going back to \eqref{prelim:H}, we multiply both sides by $F\in C_c(\R^{d})$ and integrate in $ dxdm$ on the set ${m>x^1}\subset\R^{d+1}$:
\begin{equation}
\label{prelim:TH}
\int_{\R^{d+1}}{\mathbf 1}_{\{m>x^1\}} F( x)H(m)  q(m,x;t) dm dx ={\frac12}\int_0^t \int_{\R^{2d}} F( x)H'(y^1) q(y^1, y;s) \Gamma(x,y;t-s) ds dy dx.
\end{equation}

The aim of this first step is to prove the following decomposition:
\begin{lemma} 
\label{preuvediagball}
Let $H$ be a $C^2_c(\R)$ and $F \in C_c({\mathbb R}^{d-1})$.
It holds that {  $\forall t\in [0,T]$}:
 \begin{eqnarray}
 &&\int_{{\mathbb R}^d} F(\tilde x)H(m) q(m,m,\tilde{x};t) dm d\tilde{x}=\nonumber\\
 \label{diag-ball1}
 && \frac{1}{2}\int_0^t \int_{{\mathbb R}^{2d-1}}  F(\tilde x)H'(y^1) \Gamma(y^1,\tilde{x},y^1,\tilde{y};t-s){ q(y^1,y^1,\tilde{y};s)} ds dy^1 d\tilde{y} d\tilde{ x}
 \\
 &&  -\frac{1}{2}\int_0^t \int_{{\mathbb R}^{2d-1}} F(\tilde x) H(y^1)\partial_{x^1} \Gamma(y^1,\tilde{x},y^1,\tilde{y};t-s)q(y^1,y^1,\tilde{y};s) ds dy^1 d\tilde{y} d\tilde{ x}.
 \label{diag-ball2}
 \end{eqnarray}
\end{lemma}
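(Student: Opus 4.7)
My plan is to derive the identity by combining the Ball-type representation \eqref{prelim:H} with a Newton--Leibniz decomposition of the kernel $\Gamma$ around its spatial diagonal $\{x^1=y^1\}$, an integration by parts in $y^1$, and a boundary passage justified by item (c) of Definition \ref{def-espace-sol}. I will evaluate \eqref{prelim:H} at $x=(\xi,\tilde{x})$ for a free real parameter $\xi$, multiply by $F(\tilde{x})$ and integrate over $\tilde{x}$ to form
\[
\Phi(\xi):=\int_{\R^{d-1}}F(\tilde{x})\,q_H(\xi,\tilde{x};t)\,d\tilde{x},
\]
which admits two expressions: the bulk form $\int F(\tilde{x})\int_\xi^\infty H(m)\,q(m,\xi,\tilde{x};t)\,dm\,d\tilde{x}$, and the smooth form $\tfrac12\int_0^t\int F(\tilde{x})H'(y^1)q(y^1,y^1,\tilde{y};s)\Gamma(\xi,\tilde{x},y^1,\tilde{y};t-s)\,d\tilde{x}\,d\tilde{y}\,dy^1\,ds$ inherited from \eqref{prelim:H}. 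The Gaussian smoothness of $\Gamma$ (Proposition \ref{lem-semi-groupe-dual}) makes $\Phi$ a $C^\infty$ function of $\xi$.

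The core step is the decomposition
\[
\Gamma(\xi,\tilde{x},y^1,\tilde{y};t-s)=\Gamma(y^1,\tilde{x},y^1,\tilde{y};t-s)+\int_{y^1}^\xi\partial_{x^1}\Gamma(z,\tilde{x},y^1,\tilde{y};t-s)\,dz,
\]
which isolates the value of $\Gamma$ on the spatial diagonal. Substituting this into the smooth form of $\Phi(\xi)$ splits the right-hand side into a $\xi$-independent diagonal piece (which produces the first RHS term of the lemma) and a Volterra-type $\xi$-dependent correction. I will then apply Fubini inside the correction to swap the $z$ and $y^1$ integrations, and integrate by parts in $y^1$ using the compact support of $H$; this converts the factor $H'(y^1)$ into $H(y^1)$ multiplying $\partial_{x^1}\Gamma(y^1,\tilde{x},y^1,\tilde{y};t-s)$, producing the second RHS term of the lemma.

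To recover the left-hand side $\int F(\tilde{x})H(m)q(m,m,\tilde{x};t)\,dm\,d\tilde{x}$, I will rewrite the bulk form of $\Phi(\xi)$ via the change of variable $m=\xi+u$ with $u\ge 0$ and appeal to item (c) of Definition \ref{def-espace-sol}: for a.e.\ $(m,\tilde{x})$, $q(\xi+u,\xi,\tilde{x};t)\to q(\xi,\xi,\tilde{x};t)$ as $u\to 0^+$, together with the $L^1$-domination $\sup_u q(m,m-u,\tilde{x};t)\in L^1(\R^d)$. A localization in $\xi$ via a mollifier converging to a Dirac on the diagonal, followed by a density argument in the admissible test functions $F,H$, then identifies the limiting object with the lemma's LHS and closes the proof.

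\textbf{Main obstacle.} The main difficulty is the absence of classical differentiability of $q$ in the variable $x^1$: only the one-sided trace continuity of item (c) is available. All derivatives must therefore be transferred onto the smooth kernel $\Gamma$ through integration by parts, and every Fubini exchange and limit passage must be controlled via the Gaussian upper bounds on $\Gamma$ and $\partial_{x^1}\Gamma$ from \cite{garroni:menaldi:1992} together with the integrability estimates (a)--(c) of Definition \ref{def-espace-sol}. Ensuring the vanishing of boundary terms at infinity in the $y^1$ integration by parts is a further technical requirement, controlled by the decay of $\Gamma$ and $q(y^1,y^1,\tilde{y};s)$ as $|y^1|\to\infty$.
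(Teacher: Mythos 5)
Your scaffolding (start from \eqref{prelim:H}, isolate the diagonal value of $\Gamma$, push all derivatives onto the smooth kernel, then localize) is in the right spirit, but the central step of your plan does not work. After your Newton--Leibniz decomposition, the $\xi$-dependent correction has integrand $H'(y^1)\,q(y^1,y^1,\tilde y;s)\int_{y^1}^{\xi}\partial_{x^1}\Gamma(z,\tilde x,y^1,\tilde y;t-s)\,dz$, and an integration by parts in $y^1$ necessarily differentiates the whole factor multiplying $H'$: it produces (i) a term involving $\partial_{y^1}\bigl(q(y^1,y^1,\tilde y;s)\bigr)$, which does not exist for $q\in{\cal X}$ (Item (c) gives only one-sided continuity in $x^1$, no differentiability along the diagonal), and (ii) a term $\int_{y^1}^{\xi}\partial_{y^1}\partial_{x^1}\Gamma(z,\tilde x,y^1,\tilde y;t-s)\,dz$ coming from the $y^1$-dependence of $\Gamma$ in its third argument, which is neither zero nor accounted for. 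Your claim that the integration by parts ``converts $H'(y^1)$ into $H(y^1)$ multiplying $\partial_{x^1}\Gamma$ on the diagonal'' retains only the lower-limit boundary contribution and silently discards these two terms, so the identity you would reach is not \eqref{diag-ball2}. This contradicts your own stated strategy of never differentiating $q$.

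The second, related, gap is the final localization: with $H$ fixed, mollifying in $\xi$ alone cannot turn the bulk form $\int_{m\ge\xi}H(m)\,q(m,\xi,\tilde x;t)\,dm$ into the lemma's left-hand side, because the concentration must pair $\xi=x^1$ with the integration variable $m$, and there is no fixed reference point to concentrate around. Making this precise forces you to insert the mollifier into the test function, i.e. to apply \eqref{prelim:TH} to $m\mapsto\chi_\varepsilon(m-x^1)H(m)$ — which is exactly the paper's route. There the derivative of the mollifier appears on the right-hand side, and since $\chi_\varepsilon$ depends only on $y^1-x^1$, the $y^1$-derivative is traded for an $x^1$-derivative and moved onto $\Gamma$ by an integration by parts in the \emph{free} variable $x^1$ (never in $y^1$, never touching $q$), using the cancellation $\int_{\R}\chi_\varepsilon'(y^1)\Gamma_0(x,y;t-s)\,dx^1=0$ so that only $\Gamma_1$, with the bound \eqref{maj-garro-der-gamma-1}, needs to be differentiated, and $\partial_{x^1}\Gamma_1=\partial_{x^1}\Gamma$ on the diagonal. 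This mechanism is what replaces your invalid $y^1$-integration by parts; as written, your proposal has a genuine gap at precisely the point where \eqref{diag-ball2} is supposed to be produced.
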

Note that in Lemma \ref{preuvediagball}, the functions $\Gamma$ and $q$ are considered on the diagonal $m=y^1$, as opposed to Equation \eqref{prelim:TH}.
The proof strategy involves localizing the variable $m$ in \eqref{prelim:TH} around $x^1$ by means of an identity approximation.
\begin{proof}[Proof of Lemma \ref{preuvediagball}]
By using Item (c) of Definition \ref{def-espace-sol}, observe that
$x^1 \mapsto q_H(x^1,\tilde{x};t)$ is continuous
$d\tilde{x}$ almost surely.

(i) Localizing:
{ According to Lemma \ref{lem-cont-gamma}  $d\tilde{x}$ almost surely  
$$x^1 \mapsto \int_0^t \int_{\R^d} H'(y^1) q(y^1,y,s) \Gamma(x,y;t-s) dyds \mbox{ is continuous.}$$ } { We are in position to localize in $x^1$ using the following compact support approximation of unity.}
%
{ Let $\chi$ be the $C^{\infty}$ function with compact support in $[-1,1]$ defined by $$\chi(u)=
\begin{cases}
~~a \exp \left(-\frac{1}{1-u^2}\right) & \mbox{ if }|u|<1 \\
~~0 & \mbox{else}.
\end{cases}
$$

Then, $\chi$ is non negative, even,  and { $a$ is chosen such that}
$\int_{\R} \chi(u) du =1.$ }

For fixed $x^1 \in \R$,  we consider:
{ $$
\chi_\varepsilon(m) = \frac{1}{\varepsilon} \chi\left( \frac{m-x^1}{\varepsilon}\right). 
$$}
From properties of ${ \chi}$, we can derive:
\begin{equation}
\label{convergence:dirac}
\lim_{\varepsilon\rightarrow 0}\chi_\varepsilon(m)dx^1=\delta_m(dx^1).
\end{equation}
\begin{remark}
 This convergence, which enables the localization of $q$ around the diagonal, is rigorously established in our context using the Lebesgue Dominated Convergence Theorem. For more details, we refer to Lemmas \ref{lemme:approx:dirac} and \ref{lem-cont-gamma-2} in Section \ref{technical:section}. In particular, Lemma \ref{lemme:approx:dirac}, in conjunction with Lemma \ref{lem-cont-gamma}, leads to expression \eqref{diag-ball1}, while Lemma \ref{lem-cont-gamma-2} yields expression \eqref{diag-ball2}.
 
\end{remark}
%
%

Let us now consider equation \eqref{prelim:TH} with $m\mapsto \chi_\varepsilon(m) H(m)$ instead of $H$. For $F\in C_c(\R^{d-1})$, in other words, independent of $x^1$, we get:
\begin{eqnarray}\label{prelim:TH-modif}
&&\int_{\R^{d+1}} F(\tilde x){\1_{\{x^1<m\}}} \Big( \chi_\varepsilon(m) H(m) \Big)  q(m,x;t) dm dx
\\
&=&{\frac12}\int_0^t \int_{\R^{2d}} F(\tilde x)\Big(\chi_\varepsilon(y^1) H(y^1) \Big)' q(y^1, y;s) \Gamma(x,y;t-s) ds dy dx.\nonumber 
\end{eqnarray}
On the left hand side {of \eqref{prelim:TH-modif}}, letting $\varepsilon \rightarrow 0$, from \eqref{convergence:dirac} { and using the continuity $x^1\mapsto q(m,x^1,\tilde x;t)$ for all { almost } $(m,\tilde x)$} { and all $t$} (see third part of Item (c) of Definition \ref{def-espace-sol}),  we get: 
$$
\lim_{\varepsilon \rightarrow 0}\int_{\R^{d+1}} F(\tilde x) {\1_{\{x^1<m\}}}\Big( \chi_\varepsilon(m) H(m) \Big)  q(m,x;t) dm dx ={\frac12}\int_{\R^{d}} F(\tilde x)  H(m)   q(m,m,\tilde{x};t) dmd\tilde x.
$$
On the right hand side { of \eqref{prelim:TH-modif}}, we compute the derivative:
\begin{eqnarray}
\label{detail}
&&\int_0^t \int_{\R^{2d}} F(\tilde x)\Big(\chi_\varepsilon(y^1) H(y^1) \Big)' q(y^1, y;s) \Gamma(x,y;t-s) ds dy dx\nonumber
\\
&=&\int_0^t \int_{\R^{2d}} F(\tilde x)\chi_\varepsilon(y^1) H'(y^1)  q(y^1, y;s) \Gamma(x,y;t-s) ds dy dx\\
&&+\int_0^t \int_{\R^{2d}} F(\tilde x) \chi_\varepsilon'(y^1)H(y^1)q(y^1, y;s) \Gamma(x,y;t-s) ds dy dx.\nonumber
\end{eqnarray}
(ii) We now let $\varepsilon \rightarrow 0$ respectively in the two terms in \eqref{detail}.
{
According to  Garroni-Menaldi 
\cite{garroni:menaldi:1992} (3.35) page 187:
\begin{align}
\label{rep-gamma-gar}
\Gamma(x,y;t) =\Gamma_0(x,y;t) + \Gamma_1(x,y;t)
\mbox{ where }
\Gamma_0(x,y;t) =\frac{e^{-\frac{\|x-y\|^2}{2t}}}{\sqrt{2\pi t}^d},
\end{align}
and for all $y \in R^d, ~~t>0$ $x\mapsto \Gamma_1(x,y;t) \in C^1(\R^d). $ 
Moreover, from Lemma 3.3 page 184, estimation (3.25) of Garroni-Menaldi \cite{garroni:menaldi:1992}, for $\alpha \in ]0,1[$  there exist some positive constants $C$ and $c$ such that
\begin{align}
\label{maj-garro-gamma-1}
\left| \Gamma_1(x,y;t)\right| \leq Ct^{-\frac{d}{2} +\frac{\alpha}{2}} e^{-\frac{\|x-y\|^2}{ct}},
\end{align}
\begin{align}
\label{maj-garro-der-gamma-1}
\left| \partial_{x^l}\Gamma_1(x,y;t)\right| \leq C{ t^{-\frac{d+1}{2}+\frac{\alpha}{2} }} e^{-\frac{\|x-y\|^2}{ct}}.
\end{align}

Now, letting $\varepsilon \rightarrow 0$ 
and  using Lemma \ref{lemme:approx:dirac},  and the continuity of
 $x^1\mapsto \Gamma(x^1,\tilde x, y;t)$,} $\forall t\in [0,T]$
 (see Theorem 3.5 in \cite{garroni:menaldi:1992}, page 186)
 the  first   term  on the right hand side of \eqref{detail} converges to:
\begin{eqnarray*}
\int_0^t \int_{\R^{2d}} F(\tilde x)\chi_\varepsilon(y^1) H'(y^1)q(y^1, y;s) \Gamma(x,y;t-s) ds dy dx,
\\
{ \underset{\varepsilon \rightarrow 0}{\longrightarrow}}\int_0^t \int_{\R^{2d-1}} F(\tilde x) H'(y^1)q(y^1,y^1, \tilde{y} ;s) \Gamma(y^1,\tilde{x},y^1,\tilde{y};t-s) ds dy d\tilde{x},
\end{eqnarray*}
which gives \eqref{diag-ball1} in Lemma \ref{preuvediagball} above.
\\
(iii) For the  { second term}
in \eqref{detail} we look at the integral in $dy^1$, we need to integrate:
\begin{eqnarray*}
&&\int_0^t \int_{\R^{2d}} F(\tilde x)\chi_\varepsilon'(y^1) H(y^1)  q(y^1, y;s) \Gamma(x,y;t-s) ds dy dx.
\end{eqnarray*}
%
%
Then, since
$\chi_\varepsilon(y^1) =  \frac{1}{\varepsilon} \chi( \frac{y^1-x^1}{\varepsilon}),$ we have: 
$\int_{{\mathbb R}} \chi_\varepsilon'(y^1)\Gamma_0(x,y;t-s) dx^1=0$, and
\begin{eqnarray*}
&&\int_0^t \int_{\R^{2d}} F(\tilde x)\chi_\varepsilon'(y^1) H(y^1)  q(y^1, y;s) \Gamma(x,y;t-s) ds dy dx\\
=
&&\int_0^t \int_{\R^{2d}} F(\tilde x)\chi_\varepsilon'(y^1) H(y^1)  q(y^1, y;s) \Gamma_1(x,y;t-s) ds dy dx.
\end{eqnarray*}
We can perform an integration by parts over $dx^1$ on $\R$:
\begin{eqnarray*}\int_\R  \chi'_\varepsilon(y^1) \Gamma_1(x^1,\tilde x,y^1,\tilde y;t-s)dx^1
&=&\int_\R  \left(\frac{1}{\varepsilon} \chi( \frac{y^1-x^1}{\varepsilon}) \right)' \Gamma_1(x^1,\tilde x,y^1,\tilde y;t-s)dx^1\\
&=& -\int_\R\chi_\varepsilon(y^1)\partial_{x^1}\Gamma_1(x^1,\tilde x,y^1,\tilde y;t-s)dx^1
\end{eqnarray*}
since when $x^1\rightarrow\infty,$   $\chi_\varepsilon(x^1) $ goes to $0$. 

Letting now $\varepsilon\rightarrow 0$, { using estimation \eqref{maj-garro-der-gamma-1}}, we obtain: 
$$\int_\R\chi_\varepsilon(y^1) \partial_{x^1}\Gamma_1(x^1,\tilde x,y^1,\tilde y;t-s)dx^1
\underset{\varepsilon \rightarrow 0} \longrightarrow \partial_{x^1}
\Gamma_1(y^1,\tilde x,y^1,\tilde y;t-s).$$
{ Note that according to the definition of $\Gamma,$  $\Gamma_1$ and $\Gamma_0$ (see \eqref{rep-gamma-gar})
$$\partial_{x^1}\Gamma_1(m,\tilde x,m,\tilde y;t-s)=\partial_{x^1}\Gamma(m,\tilde x,m,\tilde y;t-s).$$}
Consequently,  {using Lemma \ref{lem-cont-gamma-2}}, we have {  $\forall t\in [0,T]$:
\begin{eqnarray*}
&&\int_0^t \int_{\R^{2d}} F(\tilde x)\chi_\varepsilon'(y^1) H(y^1)  q(y^1, y;s) \Gamma(x,y;t-s) ds dy dx
\\
&&\underset{\varepsilon \rightarrow 0}
 \longrightarrow{ -}\int_0^t \int_{\R^{2d-1}} F(\tilde x) H(y^1)  q(y^1, y;s)\partial_{x^1}
\Gamma(y^1,\tilde x,y^1,\tilde y;t-s) ds dy dx,
\end{eqnarray*}}
which gives   \eqref{diag-ball2} in Lemma \ref{preuvediagball} above.

\end{proof}

\begin{remark}
We need to use dominated convergence theorem on the functions
$$
x^1\mapsto q(m,x^1,\tilde x;t)~;~\Gamma(x^1,\tilde x,m,\tilde y; t-s)~;~\partial_{x^1}\Gamma(x^1,\tilde x,m,\tilde y; t-s).
$$
in order to justify the Dirac convergence.
These functions have to satisfy the 
continuity when $x^1\rightarrow m$ and the  uniform integrability. 
The continuity is clear for $\Gamma$ and $\partial_{x^1} \Gamma$ (see e.g. \cite{garroni:menaldi:1992} Section V.3.1) and for $q$, 
{it is exactly Item (c) in Definition \ref{def-espace-sol} above}.
The integrability of $\sup_{x^1}| \Gamma(x,y;t-s)|$ and $\sup_{x^1}|\partial_{x^1} \Gamma(x,y;t-s)|$ is deduced from
 \cite{garroni:menaldi:1992} (page 171   and  (3.25) page 18).
Moreover, we also rely on the fact that $F$ has compact support in {$\R^{d-1}$}.
\end{remark}

 {\bf Second step:} 
 Going back to Lemma \ref{preuvediagball}, 
 we prove that the term \eqref{diag-ball1}:
$$
\frac{1}{2}\int_0^t \int_{{\mathbb R}^{2d-1}}  F(\tilde x)H'(y^1) \Gamma(y^1,\tilde{x},y^1,\tilde{y};t-s){ q(y^1,y^1,\tilde{y};s)} ds dy^1 d\tilde{y} d\tilde{ x}
$$
actually vanishes.

\begin{lemma}
\label{lem2}
We have {  $\forall t\in [0,T]$, $dy^1$ almost surely}
\begin{equation}
\label{RHSLem27}
\int_0^t {\int_{\R^{2d-2}} F(\tilde{x} ) q(y^1,y^1,\tilde{y};s) \Gamma(y^1, \tilde{x}, \tilde{y};t-s) dsd\tilde xd\tilde y} =0.
\end{equation}
\end{lemma}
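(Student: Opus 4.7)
The plan is to exploit the identity \eqref{prelim:H} with two different test functions and compare: first with the constant $H\equiv 1$, and then with a family $(H_\varepsilon)_{\varepsilon>0}$ smoothly approximating the step $\1_{[a,\infty)}$. Comparing on the half-space $\{x^1\ge a\}$ will force the diagonal quantity in \eqref{RHSLem27} to vanish for a.e.\ $a$.

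First, I note that $H\equiv 1$ is admissible in \eqref{prelim:H}: it lies in $C^2_b(\R)$ and its derivative $H'\equiv 0$ is trivially compactly supported. The right-hand side of \eqref{prelim:H} then vanishes identically, giving
$$
\int_{m\ge x^1} q(m,x;t)\,dm=0
$$
for $dx$-a.e.\ $x\in\R^d$ and every $t\in[0,T]$. Next, for fixed $a\in\R$ and $\varepsilon>0$, I pick $H_\varepsilon\in C^2_b(\R)$ with $H_\varepsilon'\ge 0$ supported in $[a-\varepsilon,a]$, $H_\varepsilon\equiv 0$ on $(-\infty,a-\varepsilon]$, $H_\varepsilon\equiv 1$ on $[a,\infty)$, and $\int H_\varepsilon'\,dy^1=1$. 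Applying \eqref{prelim:H} with $H=H_\varepsilon$ at any point $x=(x^1,\tilde x)$ with $x^1\ge a$, the inequality $m\ge x^1\ge a$ forces $H_\varepsilon(m)=1$, so the left-hand side reduces to $\int_{m\ge x^1}q(m,x;t)\,dm$, which vanishes by the previous step. Consequently, for all $x^1\ge a$,
$$
0=\tfrac12\int_0^t\!\!\int_{\R^d} H_\varepsilon'(y^1)\,q(y^1,y^1,\tilde y;s)\,\Gamma(x^1,\tilde x,y^1,\tilde y;t-s)\,dy^1\,d\tilde y\,ds.
$$

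Multiplying by $F(\tilde x)$ and integrating in $\tilde x$, Fubini yields $\int_0^t\!\int_\R H_\varepsilon'(y^1)\,\Psi(x^1,y^1;s)\,dy^1\,ds=0$, where $\Psi(x^1,y^1;s):=\int F(\tilde x)\,q(y^1,y^1,\tilde y;s)\,\Gamma(x^1,\tilde x,y^1,\tilde y;t-s)\,d\tilde x\,d\tilde y$. Sending $\varepsilon\to 0$, the family $(H_\varepsilon')$ is a nonnegative approximate identity at $a$; combined with the Gaussian estimates on $\Gamma$ from \cite{garroni:menaldi:1992} and the diagonal regularity of $q$ provided by Items (b)--(c) of Definition~\ref{def-espace-sol}, a Lebesgue-point argument yields, for a.e.\ $a\in\R$,
$$
\int_0^t\!\!\int_{\R^{2d-2}} F(\tilde x)\,q(a,a,\tilde y;s)\,\Gamma(x^1,\tilde x,a,\tilde y;t-s)\,d\tilde x\,d\tilde y\,ds=0,\qquad\forall\,x^1\ge a.
$$
Finally, letting $x^1\downarrow a$ and invoking the continuity of $x^1\mapsto\Gamma(x^1,\tilde x,a,\tilde y;t-s)$ (together with the Gaussian dominations of \cite{garroni:menaldi:1992} to exchange limit and integral) produces exactly \eqref{RHSLem27} at $y^1=a$.

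The main obstacle is the passage $\varepsilon\to 0$: one must rigorously justify that $\int_0^t\int_\R H_\varepsilon'(y^1)\,\Psi(x^1,y^1;s)\,dy^1\,ds$ converges to $\int_0^t\Psi(x^1,a;s)\,ds$ for almost every $a$. This is a Lebesgue-point/approximate-identity statement requiring uniform (in $\varepsilon$) integrability in the spatial variables as well as in $s$; the Gaussian bounds on $\Gamma$ and the diagonal integrability built into membership in $\mathcal X$ supply precisely these dominations, and likewise underpin the closing step $x^1\downarrow a$.
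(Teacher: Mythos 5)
Your proposal is essentially correct but follows a genuinely different route from the paper. The paper proves Lemma \ref{lem2} by taking the single test function $H(m)=\int_m^{+\infty}\Psi(y^1)dy^1$ in \eqref{prelim:H}, identifying against the arbitrary $\Psi$ to localize in $y^1$, which turns the left-hand side into the strip integral $\int_\R \1_{\{x^1<m<y^1\}}q(m,x;t)dm$; it then lets $x^1\uparrow y^1$, so that one single limit simultaneously kills the left-hand side (via Item (a) of Definition \ref{def-espace-sol}, the strip shrinking to a null set) and puts $\Gamma$ on the diagonal. You instead use two test functions: $H\equiv 1$ to obtain $\int_{m\ge x^1}q(m,x;t)dm=0$, which annihilates the left-hand side identically on the half-space $\{x^1\ge a\}$, and a smoothed step $H_\varepsilon$ whose derivative acts as an approximate identity at $a$, so the localization in $y^1$ is done by a Lebesgue-point argument rather than by identification; the diagonal in the $\Gamma$-variable is then reached by $x^1\downarrow a$ (from above rather than from below, which is harmless since only $\Gamma$ depends on $x^1$ there). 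Your route buys a very transparent mechanism for why the left side vanishes, at the price of an extra limiting procedure ($\varepsilon\to 0$) whose justification requires exactly the dominations already needed in the paper's Lemmas \ref{lem-cont-gamma} and \ref{lem-cont-gamma-2}; the paper's route avoids the approximate identity but needs the shrinking-strip argument instead. In substance both proofs consume the same ingredients (the corollary \eqref{prelim:H}, the Gaussian bounds on $\Gamma$, Items (a)--(c)).

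One step in your write-up needs repair: the quantifiers in the passage $\varepsilon\to 0$. The Lebesgue-point set of the map $y^1\mapsto\int_0^t\Psi(x^1,y^1;s)ds$ depends on $x^1$ (and on $t$), so "for a.e.\ $a$, for all $x^1\ge a$" does not follow from applying the Lebesgue differentiation theorem separately for each $x^1$: the union over uncountably many $x^1$ of the exceptional null sets need not be null. The standard fix is available with tools you already invoke: carry out the $\varepsilon\to 0$ limit only for $x^1$ in a countable dense set (e.g.\ the rationals), so that for a.e.\ $a$ the identity $\int_0^t\int_{\R^{2d-2}}F(\tilde x)q(a,a,\tilde y;s)\Gamma(x^1,\tilde x,a,\tilde y;t-s)\,d\tilde x\,d\tilde y\,ds=0$ holds for every rational $x^1>a$, and then use the continuity of $x^1\mapsto\int_0^t\int_{\R^{2d-2}}F(\tilde x)q(a,a,\tilde y;s)\Gamma(x^1,\tilde x,a,\tilde y;t-s)\,d\tilde x\,d\tilde y\,ds$ (this is precisely Lemma \ref{lem-cont-gamma-2}, which you need anyway for the final step $x^1\downarrow a$) to conclude at $x^1=a$. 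With this adjustment, and with the local integrability in $y^1$ of the relevant quantity checked as in \eqref{appendix-maj-0} (using that $F$ has compact support, the bound $\sup_{x^1}|\Gamma|\le C(t-s)^{-d/2}e^{-\|\tilde x-\tilde y\|^2/(c(t-s))}$ and Item (b)), your argument is complete at the same level of rigor as the paper's.
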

\begin{proof}

 Let $\Psi$ a compactly supported test function and  consider the  following choice for $H$:
$$
H(m)=\int^{+\infty}_m \Psi(y^1) dy^1.
$$
In that case, Equation \eqref{prelim:H} becomes :
\begin{align*}
q_H(x;t)=\int_{{\mathbb R}^2} {\mathbf 1}_{y^1 >m>x^1}\Psi(y^1)q(m,x;t) dm dy^1= -\frac12 \int_0^t \int_{{\mathbb R}^d} \Psi(y^1) q( y^1,y;s) \Gamma(x,y;t-s) ds dy,
\end{align*}
for any  $\Psi$ so by identification $dy^1$ almost surely:
\begin{align*}
\int_{{\mathbb R}} {\mathbf 1}_{x^1 <m<y^1}q(m,x;t) dm = -\frac12 \int_0^t \int_{{\mathbb R}^{d-1}}  q( y^1,y^1,\tilde{y},
;s) \Gamma(x,y^1,\tilde{y};t-s) ds d\tilde{y}.
\end{align*}
We multiply this last identity by $-F\in C^1_k(\R^{d-1})$ and integrate with respect to $d\tilde x$:
\begin{align}
\label{**}
&-\int_{\R^{d}}F(\tilde x) {\mathbf 1}_{y^1 >m>x^1}q(m,x;t) dmd\tilde x \nonumber\\
&= \frac12 {\int_{\R^{2d-2}}}F(\tilde x)\int_0^t q( y^1,y^1,\tilde{y};s) \Gamma(x,y^1,\tilde{y};t-s) ds d\tilde{y}d\tilde x
\end{align}
and letting  $x^1$ increase to $y^1$, { using Item (a) of the Definition \ref{def-espace-sol} }
the limit of left hand is  zero { $dy^1$ almost surely.}

According to the definition of $\Gamma$ in equation \eqref{rep-gamma-gar}, estimation \eqref{maj-garro-gamma-1} and Lemma \ref{lem-cont-gamma}, the limit of the right hand side when $x^1\uparrow y^1$ is { $dy^1$ almost surely:}
\begin{align*}
\frac12 {\int_{\R^{2d-2}}}F(\tilde x)\int_0^t q( y^1,y^1,\tilde{y};s) \Gamma(y^1,\tilde{x},y^1,\tilde{y};t-s) ds d\tilde{y}d\tilde x.
\end{align*}
So \eqref{RHSLem27} is proved.
\end{proof}


\begin{proof}[Proof of Proposition \ref{identite:q:prelim}]

Going back to the conclusion of Lemma \ref{preuvediagball} and cancelling the first term by Lemma \ref{lem2}, we get {  $\forall t\in [0,T]$}:
\begin{eqnarray*}
&&\int_{{\mathbb R}^d} F(\tilde x)H(y^1) q(y^1,y^1,\tilde{x};t) dy^1 d\tilde{x}
\\
&=&{ -}\frac12  \int_0^t \int_{{\mathbb R}^{2d-1}} F(\tilde x) H(y^1)\partial_{x^1} \Gamma(y^1,\tilde{x},y^1,\tilde{y};t-s)q(y^1,y^1,\tilde{y};s) ds dy^1 d\tilde{y} d\tilde{ x},
\end{eqnarray*}
which gives by identification over $H(y^1) F(\tilde x)$ 
{$dy^1 d\tilde{ x}$ almost surely}:
$$
q(y^1,y^1,\tilde{x};t)   = { -} \frac12 \int_0^t\int_{\R^{d-1}} \partial_{x^1} \Gamma(y^1,\tilde{x},y^1,\tilde{y};t-s)q(y^1,y^1,\tilde{y};s) d \tilde{y} ds.
$$
which proves the identity \eqref{qonDiag} in Proposition \ref{identite:q:prelim}.
\end{proof}

\subsection{ Proof of Theorem \ref{unique0} }
Let us show how to conclude that $q=0$.
\\
Taking the absolute values in Proposition \ref{identite:q:prelim} gives:
\begin{align}
\label{majq}
|q(y^1,y^1,\tilde{x};t)|\leq
 \frac12 \int_0^t \int_{{\mathbb R}^{d-1}} | q( y^1,y^1,\tilde{y} ,s)\partial_{x^1} \Gamma_1(y^1,\tilde{x},y^1,\tilde{y};t-s)|ds d\tilde{ y}.
 \end{align}

 From there, the idea is to iterate \eqref{majq} on itself to prove that $|q|$ is arbitrarily small using the Gaussian bounds available for $\partial_{x^1} \Gamma_1$ (see Garroni and Menaldi \cite{garroni:menaldi:1992}). 
 In Lemma \ref{lemme3Laure}, we prove an upper bound for $|q(y^1,y^1,\tilde{x};t)|$ 
derived from the convolution properties of Gaussian densities. Then, in Lemma \ref{lemmeLaure2}, we prove that  
 $|q(y^1,y^1,\tilde{x};t)|=0$  $dtdy^1d\tilde{x}$ almost surely.

We introduce the notations:
 $$\phi_d(x;t)=\frac{1}{(\sqrt{2\pi t})^d}
 \exp-\frac{\|x\|^2}{2t};~g_\alpha(t)=t^{\alpha/2-1}
 $$
 and $g_\alpha^{*n}$ the $n$th product of convolution.
 \begin{lemma}
 \label{lemme3Laure}
 There exists  constants $C_T$ and $c>0$ such that $d\tilde xdy^1dt$ almost surely,  $\forall n$ 
 \begin{equation}
  \label{(*)}
  |q(y^1,y^1,\tilde x;t)|\leq (\frac{C_T}{2})^n\int_0^t\int_{\R^{d-1}}
  g_\alpha^{*n}(t-s_n)|q(y^1,y^1,\tilde y^n;s_n)|
  \phi_{d-1}(\tilde x-\tilde y_n;c(t-s_n))ds_nd\tilde y^n.
 \end{equation}
 \end{lemma}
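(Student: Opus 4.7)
The plan is to proceed by induction on $n$, with the base case $n=1$ coming directly from the pointwise bound \eqref{majq} combined with the Gaussian estimate \eqref{maj-garro-der-gamma-1} for $\partial_{x^1}\Gamma_1$ restricted to the diagonal.

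For the base case, on the diagonal $x^1 = y^1$ the estimate \eqref{maj-garro-der-gamma-1} reduces to
$$|\partial_{x^1}\Gamma_1(y^1,\tilde x,y^1,\tilde y;t-s)|\leq C(t-s)^{-\frac{d+1}{2}+\frac{\alpha}{2}}e^{-\frac{\|\tilde x-\tilde y\|^2}{c(t-s)}}.$$
The key observation is that the right-hand side can be rewritten as a constant multiple of $g_\alpha(t-s)\,\phi_{d-1}(\tilde x-\tilde y;c'(t-s))$ for a suitable constant $c'$ (adjusted from $c$), because multiplying and dividing by the normalization factor $(2\pi c'(t-s))^{(d-1)/2}$ of $\phi_{d-1}$ converts the exponent $-\frac{d+1}{2}+\frac{\alpha}{2}$ into $-1+\frac{\alpha}{2}$, which is precisely the exponent defining $g_\alpha$. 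Plugging this into \eqref{majq} yields \eqref{(*)} for $n=1$, with $C_T$ absorbing the constants and $c$ redefined accordingly; without loss of generality we keep the same symbol $c$ throughout.

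For the induction step, suppose \eqref{(*)} holds at level $n$. Inside the integral I apply the base case inequality to $|q(y^1,y^1,\tilde y^n;s_n)|$, getting
$$|q(y^1,y^1,\tilde y^n;s_n)|\leq \tfrac{C_T}{2}\int_0^{s_n}\!\!\int_{\R^{d-1}} g_\alpha(s_n-s_{n+1})\phi_{d-1}(\tilde y^n-\tilde y^{n+1};c(s_n-s_{n+1}))|q(y^1,y^1,\tilde y^{n+1};s_{n+1})|\,ds_{n+1}d\tilde y^{n+1}.$$
Substituting and invoking Tonelli's theorem (all integrands are nonnegative), I swap the order of integration so that $(s_{n+1},\tilde y^{n+1})$ becomes the outermost variable. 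The integration in $\tilde y^n$ then collapses via the Chapman--Kolmogorov convolution identity for Gaussians:
$$\int_{\R^{d-1}}\phi_{d-1}(\tilde x-\tilde y^n;c(t-s_n))\phi_{d-1}(\tilde y^n-\tilde y^{n+1};c(s_n-s_{n+1}))\,d\tilde y^n=\phi_{d-1}(\tilde x-\tilde y^{n+1};c(t-s_{n+1})).$$
Simultaneously the $s_n$-integral gives
$$\int_{s_{n+1}}^t g_\alpha^{*n}(t-s_n)g_\alpha(s_n-s_{n+1})\,ds_n=g_\alpha^{*(n+1)}(t-s_{n+1})$$
by the very definition of convolution (after the change of variable $u=s_n-s_{n+1}$). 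Combining these two identities produces the inequality \eqref{(*)} at level $n+1$ with constant $(C_T/2)^{n+1}$, closing the induction.

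The main obstacle is purely bookkeeping: one must be careful that the Gaussian constant $c$ appearing in the exponent of $\phi_{d-1}$ is preserved at each iteration (so that Chapman--Kolmogorov actually applies with the \emph{same} $c$), and that Tonelli is legitimate. The latter is guaranteed by the nonnegativity of the integrand, while the former is secured by fixing $c$ at the base step and then never changing it. Finally, local integrability of $(s,\tilde y)\mapsto|q(y^1,y^1,\tilde y;s)|$, needed to keep the right-hand side of \eqref{(*)} finite so that induction is meaningful, is supplied by Item~(b) of Definition~\ref{def-espace-sol}.
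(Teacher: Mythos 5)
Your proposal is correct and follows essentially the same route as the paper: induction on $n$, with the base case obtained by rewriting the Garroni--Menaldi estimate for $\partial_{x^1}\Gamma_1$ on the diagonal as $C_T\,g_\alpha(t-s)\phi_{d-1}(\tilde x-\tilde y;c(t-s))$ and inserting it into \eqref{majq}, and the induction step closed via Tonelli, the Gaussian convolution identity of Remark \ref{phi1phi2}, and the convolution property $\int_s^t g_\alpha^{*n}(t-s_n)g_\alpha(s_n-s)\,ds_n=g_\alpha^{*(n+1)}(t-s)$. The only differences are cosmetic (your explicit bookkeeping of the normalization constant and of keeping $c$ fixed), so nothing further is needed.
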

 \begin{proof} We proceed by induction. Let $n=1.$ 
Equation  (3.25) page 184 \cite{garroni:menaldi:1992} with
 $l=1,0<\alpha<1$
  gives the following Gaussian estimate for $\Gamma_1$ and its derivatives:
 $$|\partial_{x^1} \Gamma_1(y^1,\tilde x,y^1,\tilde y ;t)|\leq C_T\frac{1}{\sqrt{t^{d-\alpha+1}}}\exp\left(-\frac{\| \tilde x- \tilde y\|^2}{2ct}\right).
 $$ 
 This means 
 \begin{equation}
 \label{(**)}
 |\partial_{x^1} \Gamma_1(y^1,\tilde x,y^1,\tilde y;t)|\leq C_Tg_\alpha (t)\phi_{d-1}(\tilde x- \tilde y;ct).
 \end{equation}
 We insert this bound in \eqref{majq}:
 \begin{align}
\label{majqbis}
|q(y^1,y^1,\tilde{x};t)|\leq
 \frac{C_T}{2} \int_0^t \int_{{\mathbb R}^{d-1}} \Big| q( y^1,y^1,\tilde{y} ;s) g_\alpha (t-s)\phi_{d-1}(\tilde x-\tilde y;c(t-s))\Big|ds d\tilde{ y}
 \end{align}
 that is \eqref{(*)} in case $n=1.$
 We now suppose \eqref{(*)} true up to $n,$ meaning
 $$  |q(y^1,y^1,\tilde x;t)|\leq (\frac{C_T}{2})^n\int_0^t\int_{\R^{d-1}}
  g_\alpha^{*n}(t-s_n) |q(y^1,y^1,\tilde y^n;s_n)|
  \phi_{d-1}(\tilde x-\tilde y_n;c(t-s_n))ds_nd\tilde y^n.
  $$
  Using \eqref{majqbis}  for  the factor 
  $q(y^1,y^1,\tilde y^n;s_n)$ yields

\begin{eqnarray*}
|q(y^1,y^1,\tilde x;t)|\leq (\frac{C_T}{2})^{n+1}  \int_0^t\int_{\R^{d-1}} g_\alpha^{*n}(t-s_{n})  \phi_{d-1}(\tilde x-\tilde y_{n};c(t-s_{n}))\\
\times\left[ \int_0^{s_n} \int_{{\mathbb R}^{d-1}} \Big| q( y^1,y^1,\tilde{y} ;s) g_\alpha (s_n-s)
\phi_{d-1}(\tilde y^n-\tilde y;c(s_n-s))\Big|ds d\tilde{ y}\right]
ds_{n}d\tilde y^{n}.
\end{eqnarray*}
  Using Tonelli's Theorem,  and properties of Gaussian densities convolution (see Remark \ref{phi1phi2} below), the integration in  $d\tilde y_n$ on $\R^{d-1}$ yields:
  
 $$\int_{\R^{d-1}}
  \phi_{d-1}(\tilde y^n -\tilde y;c(s_n-s))
  \phi_{d-1}(\tilde x-\tilde y_{n};c(t-s_{n}))d\tilde y_n=\phi_{d-1}(\tilde x-\tilde y;c(t-s))$$ 
  
 Thus, we get:
\begin{eqnarray*}
   |q(y^1,y^1,\tilde x;t)|\leq (\frac{C_T}{2})^{n+1}
   \int_0^t\int_{\R^{(d-1)}} \int_0^{s_n}
  g_\alpha^{*n}(t-s_{n})g_\alpha (s_n-s)
| q( y^1,y^1,\tilde{y} ;s) |\\
   \times \phi_{d-1}(\tilde x-\tilde y;c(t-s))ds_{n}ds d\tilde{ y}.
\end{eqnarray*}
Recall that $s \le s_n \le t$, we can rearrange the integrals to exhibit the convolution of $g_\alpha$ as the integral in $ds_n$, and since:
  $$
  \int_s^t
  g_\alpha^{*n}(t-s_{n})g_\alpha (s_n-s)ds_n= g_\alpha^{*(n+1)}(t-s).
  $$
  Consequently, we have established the property at rank $n+1$:
    \begin{eqnarray*}|q(y^1,y^1,\tilde x;t)|\leq (\frac{C_T}{2})^{n+1}
   \int_0^t\int_{\R^{(d-1)}} 
g_\alpha^{*(n+1)}(t-s)
| q( y^1,y^1,\tilde{y} ;s) |
\\ \times \phi_{d-1}(\tilde x-\tilde y;c(t-s))ds d\tilde{ y}.
  \end{eqnarray*}
 \end{proof}
 
 \begin{lemma}
 \label{lemmeLaure2}
 Let $M>0.$ Then
\begin{eqnarray*}
   \int_0^Tdt\int_{[-M,+M]^d}dy^1d\tilde x|q(y^1,y^1,\tilde x;t)|&\leq&
 (\frac{C_T}{2})^n(2M)^{d/2}T^{n\alpha/2+1}\frac{\Gamma^n(\alpha/2)}{\Gamma(n\alpha/2)}\\
 &&\times\int_0^Tds_n
  \sqrt{\int_{\R^d}|q(y^1,y^1,\tilde y^n;s_n)|^2dy^1d\tilde y}.
  \end{eqnarray*}
 \end{lemma}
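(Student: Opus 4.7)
The strategy is to turn the pointwise estimate of Lemma~\ref{lemme3Laure} into an $L^2(\R^d)$ bound for the diagonal slice $(y^1,\tilde x)\mapsto q(y^1,y^1,\tilde x;t)$, and then to integrate on the bounded cube $[-M,M]^d$ using Cauchy--Schwarz to recover the factor $(2M)^{d/2}$.

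First, I would apply Cauchy--Schwarz in $(y^1,\tilde x)\in[-M,M]^d$ for each fixed $t$, obtaining
\[
\int_{[-M,M]^d}|q(y^1,y^1,\tilde x;t)|\,dy^1d\tilde x \leq (2M)^{d/2}\Bigl(\int_{\R^d}|q(y^1,y^1,\tilde x;t)|^2\,dy^1d\tilde x\Bigr)^{1/2},
\]
so that integrating in $t\in[0,T]$ yields the announced factor $(2M)^{d/2}$ and reduces the problem to bounding $t\mapsto \|q(\cdot,\cdot,\cdot;t)\|_{L^2(\R^d)}$ on the diagonal.

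Next, I would observe that the inner integral in the right-hand side of \eqref{(*)} is, for each fixed $y^1$, a convolution in $\tilde x$ against the Gaussian density $\phi_{d-1}(\cdot;c(t-s_n))$. Since $\|\phi_{d-1}(\cdot;c(t-s_n))\|_{L^1(\R^{d-1})}=1$, Young's convolution inequality ($L^1\ast L^2\hookrightarrow L^2$) controls the $L^2$ norm in $\tilde x$ by $\|q(y^1,y^1,\cdot;s_n)\|_{L^2_{\tilde y^n}(\R^{d-1})}$. Integrating in $y^1\in\R$ and pulling the resulting $L^2(\R^d)$ norm inside the $ds_n$ integral via Minkowski's integral inequality, I would obtain
\[
\|q(\cdot,\cdot,\cdot;t)\|_{L^2(\R^d)}\leq \Bigl(\frac{C_T}{2}\Bigr)^n\int_0^t g_\alpha^{*n}(t-s_n)\|q(\cdot,\cdot,\cdot;s_n)\|_{L^2(\R^d)}\,ds_n.
\]

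Combining the two displays, integrating over $t\in[0,T]$ and swapping the order of integration in $(t,s_n)$ by Fubini produces the time integral $\int_{s_n}^T g_\alpha^{*n}(t-s_n)\,dt=\int_0^{T-s_n}g_\alpha^{*n}(u)\,du$, which I would evaluate through the iterated Beta-function identity $g_\alpha^{*n}(u)=\frac{\Gamma(\alpha/2)^n}{\Gamma(n\alpha/2)}u^{n\alpha/2-1}$; a crude bound of the form $(T-s_n)^{n\alpha/2}/(n\alpha/2)\leq T^{n\alpha/2+1}$ then yields the announced constant $T^{n\alpha/2+1}\frac{\Gamma^n(\alpha/2)}{\Gamma(n\alpha/2)}$. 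The main technical point will be the legitimacy of Minkowski's and Young's inequalities in this setting, which rests on the finiteness of $s\mapsto \|q(\cdot,\cdot,\cdot;s)\|_{L^2(\R^d)}$ as an element of $L^1([0,T])$; this is precisely what is guaranteed by Item~(b) of Definition~\ref{def-espace-sol}, applied to $q=p_1-p_2\in\mathcal{X}$.
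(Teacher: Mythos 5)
Your argument is correct and rests on the same ingredients as the paper (the iterated bound of Lemma \ref{lemme3Laure}, Cauchy--Schwarz against the cube to produce $(2M)^{d/2}$, the unit mass of the Gaussian kernel, the explicit formula for $g_\alpha^{*n}$, and Item (b) of Definition \ref{def-espace-sol}), but it is organized differently. The paper integrates the pointwise bound \eqref{(*)} directly over $[0,T]\times[-M,M]^d$, performs the change of variables $\tilde x=\tilde y_n+\sqrt{t-s_n}\,\tilde z$ together with Tonelli, and only then applies Cauchy--Schwarz in $(y^1,\tilde y^n)$, pairing $|q(\cdot;s_n)|$ with the indicator of the (shifted) cube; the factor $g_\alpha^{*n}(t-s_n)$ is then bounded crudely by its supremum $T^{n\alpha/2-1}\Gamma(\alpha/2)^n/\Gamma(n\alpha/2)$. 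You instead apply Cauchy--Schwarz on the cube at the outer time $t$, and convert \eqref{(*)} into a scalar Volterra inequality $\|q(\cdot,\cdot,\cdot;t)\|_{L^2}\le(\tfrac{C_T}{2})^n\int_0^t g_\alpha^{*n}(t-s_n)\|q(\cdot,\cdot,\cdot;s_n)\|_{L^2}\,ds_n$ via Young ($L^1*L^2\hookrightarrow L^2$, valid here since the convolution acts in $\tilde x$ at fixed $y^1$) and Minkowski's integral inequality; this avoids the change of variables altogether, and evaluating $\int_0^{T-s_n}g_\alpha^{*n}(u)\,du$ exactly is in fact slightly sharper than the paper's sup bound. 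One small caveat, which you share with the paper rather than introduce: your final crude estimate $(T-s_n)^{n\alpha/2}/(n\alpha/2)\le T^{n\alpha/2+1}$ holds only once $n\ge 2/(\alpha T)$, just as the paper's bound $g_\alpha^{*n}(t-s_n)\le T^{n\alpha/2-1}\Gamma(\alpha/2)^n/\Gamma(n\alpha/2)$ requires $n\ge 2/\alpha$; since the lemma is only used in the limit $n\to\infty$ (Stirling), this is immaterial, but it would be cleaner either to restrict to $n$ large or to keep the factor $2/(n\alpha)$, which only improves the decay.
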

 \begin{proof}
 According to Lemma \ref{lemme3Laure}:
 $$  |q(y^1,y^1,\tilde x;t)|\leq (\frac{C_T}{2})^n\int_0^t\int_{\R^{d-1}}
  g_\alpha^{*n}(t-s_n)|q(y^1,y^1,\tilde y^n;s_n)|
  \phi_{d-1}(\tilde x-\tilde y_n;c(t-s_n))ds_nd\tilde y^n,$$
  that we integrate on $[0,T]\times[-M,+M]^d$ so the following bound:
  
  \begin{eqnarray*}
   \int_0^T dt\int_{[-M,+M]^d}dy^1d\tilde x |q(y^1,y^1,\tilde x;t)| \leq
  (\frac{C_T}{2})^n\int_0^T dt\int_{[-M,+M]^d}dy^1d\tilde x \\ \times\int_0^t\int_{\R^{d-1}}
  g_\alpha^{*n}(t-s_n)|q(y^1,y^1,\tilde y^n;s_n)|
  \phi_{d-1}(\tilde x-\tilde y_n;c(t-s_n))ds_nd\tilde y^n.
  \end{eqnarray*}
  We change variables to  $\tilde x=\tilde y_n+\sqrt{t-s_n}\tilde z$ in the right hand side and use Tonelli's Theorem:
 
  \begin{eqnarray*}
 && \int_0^T dt\int_0^tds_n\int_{_{[-M,+M]^d\times\R^{d-1}}}dy^1d\tilde x d\tilde y^n 
  g_\alpha^{*n}(t-s_n)|q(y^1,y^1,\tilde y^n;s_n)|
  \phi_{d-1}(\tilde x-\tilde y^n;c(t-s_n)) \\ &=&
   \int_0^T dt\int_{\R^d}dy^1d\tilde z\1_{[-M,+M]}(y^1)\1_{[-M,+M]^{d-1}}(\tilde y_n+\sqrt{t-s_n}\tilde z)
  (\sqrt{t-s_n})^{d-1}\\ 
  &&\times
  \int_0^t\int_{\R^{d-1}}ds_nd\tilde y^n
  g_\alpha^{*n}(t-s_n)|q(y^1,y^1,\tilde y^n;s_n)|
  \phi_{d-1}(\sqrt{t-s_n}\tilde z;c(t-s_n)).
  \end{eqnarray*}
 
 Using Cauchy-Schwartz inequality, we bound the integral with respect to $dy^1d\tilde y^n:$ 
 \begin{eqnarray*}
 &&\int_{\R^{d}}dy^1d\tilde y^n|q(y^1,y^1,\tilde y^n;s_n)|
 \1_{[-M,+M]}(y_1)
  \1_{[-M,+M]^{d-1}}(\tilde y_n+\sqrt{t-s_n}\tilde z)\\ &\leq&\sqrt{\int_{\R^d}dy^1d\tilde y^n|q(y^1,y^1,\tilde y^n;s_n)|^2}\sqrt{\int_{\R^d}dy^1d\tilde y^n\1_{[-M,M]^d}(y^1,\tilde y^n+\sqrt{t-s_n}\tilde z)}\\
  &\leq& (2M)^{d/2}\sqrt{\int_{\R^d}dy^1d\tilde y^n|q(y^1,y^1,\tilde y^n;s_n)|^2}.
  \end{eqnarray*}
  We now remark that $\int_{\R^{d-1}} \phi_{d-1}(\sqrt{t-s_n}\tilde z;c(t-s_n))d\tilde z=1,$ and Lemma  11 Section \ref{technical:section} provides 
 for all $n\geq 0,$ $g_\alpha^{*n}(t)=t^{\frac{n\alpha}{2}-1}\frac{(\Gamma(\alpha/2))^n}{\Gamma(\frac{n\alpha}{2})}\leq T^{\frac{n\alpha}{2}-1}\frac{(\Gamma(\alpha/2))^n}{\Gamma(\frac{n\alpha}{2})}.$ We gather all these bounds
  \begin{eqnarray*}
   &&\int_0^T dt\int_{[-M,+M]^d}dy^1d\tilde x |q(y^1,y^1,\tilde x;t)|\\&\le&
  (\frac{C_T}{2})^n(2M)^{d/2}T^{n\alpha/2-1}\frac{(\Gamma(\alpha/2))^n}{\Gamma(\frac{n\alpha}{2})}\int_0^Tdt\int_0^tds_n
\sqrt{\int_{\R^d}dy^1d\tilde y^n|q(y^1,y^1,\tilde y^n;s_n)|^2}\\&\leq&
  (\frac{C_T}{2})^n(2M)^{d/2}T^{n\alpha/2+1}\frac{(\Gamma(\alpha/2))^n}{\Gamma(\frac{n\alpha}{2})}
\sqrt{\int_{\R^d}dy^1d\tilde y^n|q(y^1,y^1,\tilde y^n;s_n)|^2}
  \end{eqnarray*}
  which conclude the proof.
\end{proof}
We now can prove Theorem 2: according to Item (b) 
$$\int_0^Tds_n
  \sqrt{\int_{\R^d}|q(y^1,y^1,\tilde y^n;s_n)|^2dy^1d\tilde y}<\infty.
$$
In Lemma \ref{lemmeLaure2} the bound 
$ (\frac{C_T}{2})^n(2M)^{d/2}T^{n\alpha/2+1}\frac{\Gamma^n(\alpha/2)}{\Gamma(n\alpha/2)}$ goes to $0$ when $n$ goes to infinity (Stirling formula), so $\forall M>0$
$$ \int_0^Tdt\int_{[-M,+M]}dy^1d\tilde x|q(y^1,y^1,\tilde x;t)|=0
$$
meaning that $dtdy^1d\tilde x$ almost surely $q(y^1,y^1,\tilde x;t)=0$ on 
$[0,T]\times [-M,+M]^d$ for all $M.$ 
Thus $q(y^1,y^1,\tilde x;t)=0$   $dtdy^1d\tilde x$ almost surely. This achieves the proof of Theorem \ref{unique0}.
\\

\section{Checking assumptions of Theorem \ref{ball} for $q_H$}
\label{checkingBall}
In this section, we check that the assumptions of Ball's Theorem \ref{ball} actually holds for $q_H$ in equation \eqref{edp-qH}.
The Banach space $X$ is $L^2(\R^d).$
{
We have to prove that

\begin{itemize}
\item[-]   ${\cal L^*}$ is a densely defined closed linear operator on $X$,
and  ${\cal L^*}$ generates a strongly continuous { on $X$} semi-group $(Q_t)_{t \le T}$ { such that the operators $Q_t$ are bounded on $L^2(\R^d)$}:
this is done in Proposition \ref{lem-semi-groupe-dual}.

\item[-]    $f$ belongs to $L^1([0,T], X)$
where $f(t,x)= \frac12 H'(x^1) q(x^1,x^1,\tilde{x};t) .$

\item[-]   $u\in C([0,T],X)$ where
$ u = q_H.$ 
\end{itemize}

 \subsection{The function 
 $f$ belongs to $L^1([0,T], X)$}

Recall that this function is defined as
$$
f:t\mapsto  \left(x\mapsto \frac12  H'(x^1) q(x^1,x^1,\tilde{x};t) \right).
$$
Since $H'$ is bounded, this is a consequence of Item (b) { satisfied by $q \in {\cal X}.$}

\subsection{The function $u=q_H$ belongs to $\mathcal{C}([0,T],X),$  where $X=L^2(\R^d)$.}

\begin{lemma}
\label{contqH}
{ Assume  $q$ satisfies Items (a) and (b), meaning}
\begin{eqnarray}
\label{HYPball}
&&\sup_{{ t\in]0,T]}}\int_{\R^d}\left[\int_{m\geq x^1} |q(m,x;t)|dm\right]^2dx<\infty
\\
&&
\label{HYPball2}
{\int_0^t ds\sqrt{\int_{\R^d} |q(x^1,x;s)|^2 dx} <\infty,}
\end{eqnarray}
then $q_H(x;t):=\int_\R H(m)q(m,x;t)dm$
satisfies $q_H\in C\left([0,T],L^2(\R^d)\right).$
\end{lemma}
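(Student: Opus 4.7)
My plan is to handle the two parts of the claim separately: the pointwise-in-time membership $q_H(\cdot, t) \in L^2(\R^d)$, and then the strong continuity $t \mapsto q_H(\cdot, t)$ in $L^2(\R^d)$.

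For the first part I would apply the Cauchy--Schwarz inequality pointwise in $x$ to the defining expression $q_H(x;t) = \int_{m \geq x^1} H(m)\, q(m, x; t)\, dm$, using boundedness of $H$, to obtain $|q_H(x;t)|^2 \leq \|H\|_\infty^2 \bigl[\int_{m \geq x^1} |q(m,x;t)|\,dm\bigr]^2$, whose integral in $x$ is uniformly bounded in $t$ thanks to hypothesis \eqref{HYPball}. This reproduces the computation \eqref{borne:H:q} from the proof of Proposition \ref{equation_qH} and gives $\sup_{t \in ]0,T]}\|q_H(\cdot,t)\|_{L^2(\R^d)} < \infty$.

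For continuity at an arbitrary $t_0 \in [0,T]$, I would exploit the weak identity \eqref{edp-qh-test} that $q_H$ satisfies. Subtracting this identity at times $t$ and $t_0$ against a test function $F \in C^2_c(\R^d)$ yields
\begin{align*}
\int_{\R^d} F(x)\bigl[q_H(x;t) - q_H(x;t_0)\bigr]\,dx &= \int_{t_0}^t \int_{\R^d} \mathcal{L}F(x)\, q_H(x;s)\, dx\, ds \\
&\quad + \tfrac12 \int_{t_0}^t \int_{\R^d} H'(m)\,F(m,\tilde{x})\, q(m,m,\tilde{x};s)\, dm\, d\tilde{x}\, ds.
\end{align*}
Cauchy--Schwarz together with the uniform $L^2$-bound from the previous paragraph shows that the first right-hand term is $O(|t-t_0|)$; the second is bounded by $\|H'\|_\infty \|F\|_{L^2} \int_{t_0}^t \|q(\cdot^1,\cdot^1,\tilde{\cdot};s)\|_{L^2}\,ds$, which vanishes as $t \to t_0$ by \eqref{HYPball2} and absolute continuity of the Lebesgue integral. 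This delivers weak $L^2$-continuity on the dense subspace $C_c^2(\R^d)$, extended to all of $L^2(\R^d)$ via the uniform bound.

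The main obstacle is to upgrade this weak $L^2$-continuity to strong $L^2$-continuity. My plan is to isolate the ``trace'' source term $J(t) := \tfrac12 \int_0^t H'(\cdot^1)\, q(\cdot^1, \cdot^1, \tilde{\cdot}; s)\,ds$, which, by Minkowski's integral inequality and \eqref{HYPball2}, is an indefinite Bochner integral of an $L^1([0,T];L^2(\R^d))$-valued function, hence automatically in $C([0,T], L^2(\R^d))$ with $\|J(t)-J(t_0)\|_{L^2} \leq \|H'\|_\infty \int_{t_0}^t \|q(\cdot^1,\cdot^1,\tilde{\cdot};s)\|_{L^2}\,ds \to 0$. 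It remains to prove strong continuity of $v(t) := q_H(\cdot, t) - J(t)$, which by \eqref{edp-qH} coincides with the distributional antiderivative of $\mathcal{L}^* q_H$. I would close the gap by combining the weak $L^2$-continuity of $v$ already derived with uniform $L^2$-boundedness and the dissipative structure of $\mathcal{L}^*$ (whose semigroup $Q_t$ is characterised in Proposition \ref{lem-semi-groupe-dual}); this is the most delicate step of the argument.
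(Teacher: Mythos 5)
Your Steps 1 and 2 coincide with the paper's own proof: the uniform bound $\sup_{t\in[0,T]}\|q_H(\cdot;t)\|_{L^2(\R^d)}<\infty$ follows from \eqref{HYPball} and the boundedness of $H$ exactly as in the paper's Step 1, and your weak continuity argument (test the identity \eqref{edp-qh-test} between times $t_0$ and $t$ against $F\in C^2_c(\R^d)$, bound the two terms by $\int_{t_0}^{t}\|q_H(\cdot;u)\|_{L^2}\|{\cal L}F\|_{L^2}du$ and $\|H'\|_\infty\|F\|_{L^2}\int_{t_0}^{t}\|\tilde q(\cdot;u)\|_{L^2}du$ with $\tilde q(x;u)=q(x^1,x^1,\tilde x;u)$, then extend to all of $L^2$ by density and the uniform bound) is the paper's Step 2 almost verbatim.

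The genuine gap is your final step, which you yourself leave open. The decomposition you propose does not close it: $J(t)$ is indeed a Bochner antiderivative of an $L^1([0,T];L^2(\R^d))$-valued map, hence strongly continuous, but $v(t)=q_H(\cdot;t)-J(t)$ is only a \emph{distributional} antiderivative of ${\cal L}^*q_H(\cdot;s)$; since $q_H(\cdot;s)$ is merely in $L^2$ and no $H^2$-regularity is available, ${\cal L}^*q_H(\cdot;s)$ is not an $L^2$-valued integrable function and no Bochner/Minkowski argument applies to $v$. Moreover, appealing to the semigroup $(Q_t)$ or to the variation-of-constants representation \eqref{ballSol} to control $v$ would be circular: strong continuity $q_H\in C([0,T],L^2(\R^d))$ is precisely one of the hypotheses needed to invoke Ball's Theorem \ref{ball}, and Lemma \ref{contqH} exists in Section \ref{checkingBall} exactly to verify that hypothesis, so the representation formula cannot be used before the lemma is proved. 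For comparison, the paper does not introduce any such decomposition: its Step 3 concludes directly that weak continuity (its Step 2) together with the uniform $L^2$ bound (its Step 1) yields continuity in $C([0,T],L^2(\R^d))$. Your instinct that weak continuity plus mere boundedness does not formally give norm continuity is reasonable (in a Hilbert space one would also want, e.g., continuity of $t\mapsto\|q_H(\cdot;t)\|_{L^2}$), but as written your argument stops exactly where the difficulty lies, so the claimed conclusion $q_H\in C([0,T],L^2(\R^d))$ is not established by the proposal.
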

%
 \begin{proof}
{\bf Step 1}: We show that $\sup_{t\in[0,T]} \|q_H(.;t)\|_{L^2(\R^d)} <+\infty.$
\\
Remark that for any $t\in [0,T]$ and recalling $H\in C^1_b(\R)$:
$$\int_{\R^d}q_H^2(x;t)dx=
\int_{m\geq x^1} \left(\int_{\R^d} H(m)q(m,x;t)dm\right)^2dx\leq\|H\|_\infty^2 \int_{\R^d}\left(\int_{m\geq x^1}  |q(m,x;t)|dm\right)^2dx
$$
and using $\sup_{t\in[0,T]}$ on both sides, Step 1 is proved according to Assumption \eqref{HYPball} which is Item (a).
\\
\\
{\bf Step 2}:  for all $\varphi\in L^2(\R^d), ~t\mapsto \int_{\R^d}\varphi(x)q_H(x;t)dx$ is  continuous.

 $\bullet$ First we consider 
 { $\varphi \in C^2_c(\R^d)$ and  $\psi(m,x)=H(m)\varphi(x).$} According to the PDE \eqref{edp1m} (noting that
$q=p^1-p^2$ so the first term is zero)
\begin{eqnarray*}
&&\int_{\R^d}\varphi(x)q_H(x,;)dx=
\int_{\cal T}\varphi(x) H(m)q(m,x;t)dmdx=
\\
&&\int_0^t\int_{\cal T} q(m,x;s)H(m){\cal L}\varphi(x)dmdxds
+{ \frac{1}{2}}
\int_0^t\int_{\R^d} H'(m)\varphi(m,\tilde x ){  q(m,m,\tilde x;s)}dmd\tilde xds
\end{eqnarray*}
where we identify (using Lebesgue Theorem)
\begin{eqnarray*}
&&\int_{\R^d}\varphi(x)q_H(x;t)dx=\\
&&\int_0^t
\int_{\R^d}
 q_H(x;u)
 {\cal L}\varphi(x)dxdu
+ \frac{1}{2}
\int_0^t\int_{\R^d} H'(m)\varphi(m,\tilde x ){  q(m,m,\tilde x;u)}dmd\tilde xdu.
\end{eqnarray*}
Using Cauchy-Schwartz, boundness of $H'$
and denoting $\tilde q(x;u)=q(x^1,x^1,\tilde x;u)$:
\begin{eqnarray}\label{eq-cont-q_h}
&&\left|\int_{\R^d}\varphi(x)[q_H(x;t)-q_H(x;s)]dx\right|\leq
\\
&&\int_s^t
 \|q_H(.;u)\|_{L^2(\R^d)}
 \|{\cal L}\varphi\|_{L^2(\R^d)}du
+ \frac{1}{2}
\int_s^t\|H'\|_\infty \|\varphi\|_{L^2(\R^d)} 
 \|\tilde q(.;u)\|_{L^2(\R^d)}du.\nonumber
\end{eqnarray}
When $f \in L^1[0,T]$, the function $t \mapsto \int_0^t f(s) ds$ is continuous. According to Step 1 and Assumption \eqref{HYPball2} on $\tilde{q}$, we deduce that the function:
$$ t \mapsto \int_0^t
 \|q_H(.;u)\|_{L^2(\R^d)}
 \|{\cal L}\varphi\|_{L^2(\R^d)}du
+ \frac{1}{2}
\int_0^t\|H'\|_\infty \|\varphi\|_{L^2(\R^d)} 
 \|\tilde q(.;u)\|_{L^2(\R^d)}du$$
 is continuous.
 Thus, for all  $\varepsilon >0,$ there exists $\eta >0,$ such that for $|t-s|<\eta$ 
 \begin{align*}
 \int_s^t
 \|q_H(.;u)\|_{L^2(\R^d)}
 \|{\cal L}\varphi\|_{L^2(\R^d)}du
+ \frac{1}{2}
\int_0^t\|H'\|_\infty \|\varphi\|_{L^2(\R^d)} 
 \|\tilde q(.;u)\|_{L^2(\R^d)}du\leq \varepsilon,
 \end{align*}
and according to \eqref{eq-cont-q_h}
 \begin{align*}
 \left|\int_{\R^d}\varphi(x)[q_H(x;t)-q_H(x;s)]dx\right|<\varepsilon
 \end{align*}
 so yields the  continuity of the map
$t\mapsto \int_{\R^d}\varphi(x)q_H(x;t)dx
$ for $\varphi \in C^2_c(\R^d)$.
 

$\bullet$ Secondly, we prove the
 continuity of the map
$t\mapsto \int_{\R^d}\varphi(x)q_H(x;t)dx
$ for $\varphi \in L^2(\R^d)$.
\\
There exists for all $\varepsilon>0$ a function $\varphi_\varepsilon \in C^2_c(\R^d)$  such that
$$
\|\varphi-\varphi_\varepsilon\|_{L^2(\R^d)}\leq\varepsilon.
$$
Let $0<s<t$:
\begin{eqnarray*}
\int_{\R^d}\varphi(x)(q_H(x;t)-q_H(x;s))dx&=&
\int_{\R^d}(\varphi-\varphi_\varepsilon )(x)q_H(x;t)
\\
&&-\int_{\R^d}(\varphi-\varphi_\varepsilon)(x)q_H(x;s)dx
+\int_{\R^d}\varphi_\varepsilon(x)[q_H(x;t)-q_H(x;s)]dx.
\end{eqnarray*}
Using the continuity of ${ t\mapsto}\int_{\R^d}\varphi_\varepsilon(x)q_H(x;t)dx$ { due to} $\varphi_\varepsilon\in C^2_c$, there exists $\eta$ such that $t-s\leq \eta$ yields
$$|\int_{\R^d}\varphi_\varepsilon(x)[q_H(x;t)-q_H(x;s)]dx|\leq \varepsilon.
$$
We now use the following for $u=t$ and $s$ $\in [0,T]$
$$|\int_{\R^d}(\varphi-\varphi_\varepsilon)(x)q_H(x;u)dx|
\leq 
\|\varphi-\varphi_\varepsilon\|_2
\sup_{u\in[0,T]}[\int_{\R^d}(q_H(x;u))^2dx]^\frac12.$$
Thus for $0\leq s\leq t\leq T,~t-s<\eta$ gathering these three bounds
$$\left|\int_{\R^d}\varphi(x)(q_H(x;t)-q_H(x;s))dx\right|\leq \varepsilon(1+2\sup_{u\in[0,T]}[\int_{\R^d}(q_H(x;u))^2dx]^\frac12)
$$
which concludes the uniform continuity of the map
$t\mapsto \int_{\R^d}\varphi(x)q_H(x;t)dx
$ for $\varphi \in L^2(\R^d)$.
\\
\\
{\bf Step 3}: $q_H\in C\left([0,T],L^2(\R^d)\right).$
\\
The previous step gives the weak continuity, the first step the uniform bound in $L^2,$ so
the continuity in $C([0,T],L^2(\R^d))$ is proved.

\end{proof}


\section{The density $p_V$ is an element of space  ${\mathcal X}$}
 \label{pVdansX}

 Let us denote $p_V(\cdot;t,x_0)$ 
the density of law of $(M_t,X_t)$ when the initial condition is $X_0=x_0.$ { We recall that
$f_0 $ is the square integrable   density of $ X_0.$}
 Thus
$$p_V(m,x;t)=\int_{\R^d}p_V(m,x;t,x_0)f_0(x_0)dx_0.
$$
The aim in this section is to check  that the function $p_V$ defined above satisfies Items (a)  (b) and (c) in Definition~\ref{def-espace-sol}}.

\begin{prop}
\label{itema}
Item (a) is satisfied by $p_V$:
\begin{eqnarray*}
&&\sup_{t\in]0,T]}\int_{\R^d}\left[\int_\R |p_V(m,x;t)|dm\right]^2dx<\infty.
\end{eqnarray*}
\end{prop}

\begin{proof}
According to \cite{coutin:pontier:2020} (page 26 (ii)) there exists $C_T$ such that
\begin{equation}
\label{pVversusGauss}
p_V(m,x;t,x_0)\leq C_T\phi_{d+1}(m-x^1_0,m-x^1,\tilde x-\tilde x_0;{ 2t}){\mathbf 1}_{\{m>\max(x^1,x_0^1)\}}
\end{equation}
where $\phi_k(u^1, \dots,u^k;t)=\frac{1}{(\sqrt{2\pi t})^k}
e^{-\frac{1}{2t}\sum_{i=1}^k (u^i)^2},$ $u=(u^1,\dots,u^k)\in \R^k$.

\begin{remark} ~~
\label{phi1phi2}

\begin{itemize}
\item  Consider two $\R^d$ valued independent Gaussian variables
$X\sim {\cal N}(a,s)$ and $Y\sim {\cal N}(b,r)$,
 the sum
$-X+Y$ has distribution ${\cal N}(b-a,s+r)$ and the Gaussian densities satisfy the convolution property:
$${ \int_{\R^d}}\phi_d(x-a;s)\phi_d(x-b;r)dx= \phi_d(b-a;r+s).
$$
\item Besides, we can split the $k$ dimensional density $\phi_k(u^1, \dots, u^k;t)$ as a product of its marginals $\phi_1(u^i;t)$:
$$
\phi_k(u^1, \dots,u^k;t) = \phi_1(u^1;t) \times \cdots \times \phi_1(u^k;t).
$$
\end{itemize}
\end{remark}
%
 According to \eqref{pVversusGauss}
\begin{eqnarray}
A_t&:=&\int_{\R^d}
\left|\int_\R p_V(m,x;t)dm\right|^2dx\nonumber
\\
&\leq& C_T^2\int_{\R^{d}}\left|\int_\R\int_{\R^d}\phi_{d+1}(m-x^1_0,m-x^1,\tilde x-\tilde x_0;2t)f_0(x_0)dx_0dm\right|^2dx.
\end{eqnarray}
Integrating with respect to $m$ and using Remark \ref{phi1phi2} with $d=1$ and
\\
$\phi_{d+1}(m-x^1_0,m-x^1,\tilde x-\tilde x_0;2t)=\phi_1(m-x^1_0;2t)\phi_1(m-x^1;2t)\phi_{d-1}(\tilde x-\tilde x_0;2t)$, we get:
$$\int_\R\phi_{d+1}(m-x^1_0,m-x^1,\tilde x-\tilde x_0;2t)dm=\phi_1(x^1-x^1_0;4t)\phi_{d-1}(\tilde x-\tilde x_0;2t).
$$
Thus, we have :
\begin{eqnarray*}
\int_{\R^d}
\left|\int_\R p_V(m,x;t)dm\right|^2dx
&\leq&  C_T^2\int_{\R^{3d}}
\phi_1(x^1-x^1_0;4t)\phi_1(x^1-y^1_0;4t)
\\
&&\times\phi_{d-1}(\tilde x-\tilde x_0;2t)\phi_{d-1}(\tilde x-\tilde y_0;2t)
f_0(x_0)f_0(y_0)d x dx_0dy_0.
\end{eqnarray*}
Then integrating with respect to $x^1,\tilde x$ and using once again Remark  \ref{phi1phi2}:
\begin{eqnarray*}
&&\int_{\R^d}
\left|\int_\R p_V(m,x;t)dm\right|^2
dx
\\
&\leq&  C_T^2\int_{\R^{2d}}
\phi_1(x_0^1-y^1_0;{ 8t})
\phi_{d-1}(\tilde x_0-\tilde y_0;4t)
f_0(x_0)f_0(y_0) dx_0dy_0\\
&\le&  C_T^2\int_{\R^{2d}}
\phi_d( x_0- y_0;8t)
f_0(x_0)f_0(y_0) dx_0dy_0,
\end{eqnarray*}
where we have used that  $\max(\phi_k(y;t), \phi_k(y;2t) )\leq (\sqrt 2)^k\phi_k(y;2t)$.
Let $z=x_0-y_0$  so $y_0=x_0 -z$,  with respect to a multiplicative constant 
$$ \int_{\R^{2d}}
\phi_1(x_0^1-y^1_0;{ 8t})
\phi_{d-1}(\tilde x_0-\tilde y_0;4t)
f_0(x_0)f_0(y_0) dx_0dy_0\leq 
\int_{{\mathbb R}^{2d}} \phi_d( z;{ 8t}) f_0(x_0) f_0(x_0-z) dz dx_0.$$

We first integrate with respect to  $x_0$ so, the  Cauchy-Schwartz inequality yields the upper bound:
$$ \int_{{\mathbb R}^{d} }\phi_d( z;{ 8t}) \left(\sqrt{\int_{\R}f_0^2(x_0) dx_0 \int_{\R}  f_0^2(x_0-z) dx_0}\right) dz$$
then using the change of variable $u= x_0-z$
in the last factor under the square root, since 
$$ \int_{{\mathbb R}^{d} }\phi_d( z;{ 8t})dz=1,$$ the upper bound is
$$A_t\leq C_T
 \sqrt{\int_{\R}f_0^2(x_0) dx_0 \int_{\R}  f_0^2(u)  du}=C_T\|f_0\|_{L^2}^2 ,$$
meaning 
$A_t=\int_{\R^d}
\left|\int_\R p_V(m,x;t)dm\right|^2dx<\infty$ according to assumption on $f_0$ in Theorem \ref{mthm}.
\end{proof}

\begin{prop}
\label{itemb}
Item  (b) is satisfied by $p_V$:
\begin{eqnarray*}
{ \int_0^T ds\sqrt{\int_{\R^d} |p_V(x^1,x;s)|^2 dx} <\infty.}
\end{eqnarray*}
\end{prop}

\begin{proof} 
Using \eqref{pVversusGauss} with $m=x^1$
\begin{eqnarray*}
p_V(x^1,x;t)\leq C_T\int_{\R^d}\phi_{d+1}(x^1-x^1_0,0,\tilde x-\tilde x_0;2t)f_0(x_0)dx_0
\\
=\frac{C_T}{\sqrt{4\pi t}}\int_{\R^d}
\phi_{d}(x^1-x^1_0,\tilde x-\tilde x_0;2t)f_0(x_0)dx_0.
\end{eqnarray*}
We compute  the $L^2$ norm
\begin{eqnarray*}
&\int_{\R^d}(p_V(x^1,x;t))^2dx\\
&~~~~\leq \frac{C_T^2}{4\pi t}
\int_{\R^{3d}}
\phi_{d}(x^1-x^1_0,\tilde x-\tilde x_0;2t)\phi_{d}(x^1-y^1_0,\tilde x-\tilde y_0;2t)f_0(x_0)f_0(y_0)dx_0dy_0dx.
\end{eqnarray*}
We integrate with respect to  $dx$
and use Remark \ref{phi1phi2}:
\begin{eqnarray*}
\int_{\R^d}(p_V(x^1,x;t))^2dx
\leq \frac{C_T^2}{4\pi t}
\int_{\R^{2d}}
\phi_{d}(x^1_0-y^1_0,\tilde x_0-\tilde y_0;4t)f_0(x_0)f_0(y_0)dx_0dy_0.
\end{eqnarray*}
We operate the change of variable $z=x_0-y_0$
\begin{eqnarray*}
\int_{\R^d}(p_V(x^1,x;t))^2dx
\leq \frac{C_T^2}{4\pi t}
\int_{\R^{2d}}
\phi_{d}( z;4t)f_0(x_0)f(x_0-z)dx_0dz.
\end{eqnarray*}
Using Cauchy-Schwartz inegality for the measure $dx_0$:
\begin{eqnarray*}
\int_{\R^d}(p_V(x^1,x;t))^2dx
\leq \frac{C_T^2}{4\pi t}
\int_{\R^{d}}
\phi_{d}( z;4t)\|f_0\|_{L^2(\R^d)}^2dz=
\frac{C_T^2}{4\pi t}\|f_0\|_{L^2(\R^d)}^2.
\end{eqnarray*}
{
The integrability in time of this $L^2(\R^d)$-norm
yields Item (b):
$$\int_0^Tdt\sqrt{\int_{\R^d}(p_V(x^1,x;t))^2dx}
\leq \int_0^Tdt\frac{C_T}{\sqrt{2\pi t}}\|f_0\|_{L^2(\R^d)}<\infty.
$$
}
\end{proof}

We now prove that $p_V$ satisfies Item (c) in Definition \ref{def-espace-sol}.
\\
Note that Item (c) is actually stronger than Hypothesis 2.1 in \cite{coutin:pontier:2020}, which is satisfied in the case where $A = I_d$ (our current setting) and $d = 1$. For clarity, we restate the Hypothesis 2.1 of \cite{coutin:pontier:2020} here:

For all $t > 0$, the density of the distribution of the random vector $(M_t, X_t)$ with respect to the Lebesgue measure, denoted by $p_V$, satisfies:

\begin{itemize} \item[(i)] $(t, m, \tilde{x}) \mapsto \sup_{u > 0} p_V(m, m - u, \tilde{x}; t)$ belongs to $L^1\Big([0, T] \times \mathbb{R}^d, dt  dm  d\tilde{x}\Big).$ 

\item[(ii)] For each $t$, almost surely in $(m, \tilde{x}) \in \mathbb{R}^d$, the limit $\lim_{u \rightarrow 0^+} p_V(m, m - u, \tilde{x}; t)$ exists and is denoted by $p_V(m, m, \tilde{x}; t).$ \end{itemize}

However, Property (ii) alone is insufficient to derive Item (c), as we require continuity of $x^1 \mapsto p_V(m, x^1, \tilde{x}; t)$ throughout ${\cal T}$, not just on the boundary.

\begin{prop}[Item (c) is satisfied by $p_V$]
\label{itemc}
 For all $t\in ]0,T],$ 
 $(m,\tilde x)\mapsto\sup_{u>0}p(m,m-u,\tilde x;t)\in L^1(\R^d);$ 
 and for  almost surely in $(m,\tilde{x}) \in {\mathbb R}^d,$ 
 $x^1 \mapsto p(m,x^1,\tilde{x};t)$ is continuous on $]-\infty, m[$  and  $\lim_{u\rightarrow 0^+} p(m,m-u,\tilde{x};t)$
 exists and is denoted by $ p(m,m,\tilde{x};t).$
%
\end{prop}

The proof is a consequence of the following lemmas and propositions.
First, we prove the integrability assumption.

\begin{lemma}
\label{item-c-integ}
For all $t\in ]0,T],$  $(m,\tilde x)\mapsto\sup_{u>0}p_V(m,m-u,\tilde x;t)\in L^1(\R^d)$  .
\end{lemma}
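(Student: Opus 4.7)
The plan is to derive the $L^1$ bound directly from the Gaussian upper bound \eqref{pVversusGauss} combined with the product decomposition of Gaussian densities given in Remark \ref{phi1phi2}. Recall that for $x_0 \in \R^d$, setting $x = (m-u,\tilde{x})$ with $u>0$ makes the indicator in \eqref{pVversusGauss} reduce to $\mathbf{1}_{\{m > x_0^1\}}$, so that
\[
p_V(m,m-u,\tilde{x};t) \;\leq\; C_T \int_{\R^d} \phi_{d+1}(m-x_0^1,\,u,\,\tilde{x}-\tilde{x}_0;\,2t)\, f_0(x_0)\,dx_0.
\]
By the factorisation property of Gaussian densities, this integrand splits as $\phi_1(m-x_0^1;2t)\,\phi_1(u;2t)\,\phi_{d-1}(\tilde{x}-\tilde{x}_0;2t)$.

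The key observation is that $u \mapsto \phi_1(u;2t) = (4\pi t)^{-1/2} e^{-u^2/(4t)}$ attains its supremum over $u>0$ at $u = 0$, with value $(4\pi t)^{-1/2}$. Since this factor is the only place $u$ enters, I pull the supremum inside the integral and obtain
\[
\sup_{u > 0} p_V(m,m-u,\tilde{x};t) \;\leq\; \frac{C_T}{\sqrt{4\pi t}} \int_{\R^d} \phi_1(m-x_0^1;2t)\, \phi_{d-1}(\tilde{x}-\tilde{x}_0;2t)\, f_0(x_0)\, dx_0.
\]

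It then remains to integrate this bound over $(m,\tilde{x}) \in \R^d$. By Tonelli's theorem, the integrals of $\phi_1(\cdot ;2t)$ over $dm$ and of $\phi_{d-1}(\cdot;2t)$ over $d\tilde{x}$ each equal $1$, leaving $\int_{\R^d} f_0(x_0)\,dx_0 = 1$. Thus
\[
\int_{\R^d} \sup_{u>0} p_V(m,m-u,\tilde{x};t)\, dm\, d\tilde{x} \;\leq\; \frac{C_T}{\sqrt{4\pi t}} < \infty,
\]
which establishes the claim for every fixed $t \in ]0,T]$. There is no real obstacle here beyond the routine verification that the supremum can be brought inside the integral, which is immediate since $u$ only multiplies the non-negative factor $\phi_1(u;2t)$ and the rest of the integrand is non-negative.
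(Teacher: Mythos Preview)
Your proof is correct and follows essentially the same approach as the paper's own proof. The paper uses the bound \eqref{pVversusGauss}, observes that $e^{-u^2/(4t)}\leq 1$ to replace $\phi_{d+1}$ by $(4\pi t)^{-1/2}\phi_d$, and then integrates over $(m,\tilde x)$ against $f_0$; your argument is the same, only spelling out the Gaussian factorisation and the supremum step a bit more explicitly.
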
 

\begin{proof} 
Recall that 
$
p_V(m,x;t)= \int_{{\mathbb R}^d} p_V(m,x,t;x_0)f_0(x_0) dx_0
$
where $p_V(m,x;t,x_0)$ is the density of the law of $(M_t,X_t)$ when the initial condition is $X_0=x_0.$
Moreover, { recall \eqref{pVversusGauss}}
\begin{align*}
| p_V(m,x;t, x_0)| \leq C_T \phi_{d+1}(m-x_0^1,m-x^1,\tilde{x}- \tilde{x_0};2t)
 {\mathbf 1}_{ m > \max(x^1,x_0^1)}.
\end{align*}
Then, using the fact that $e^{-\frac{u^2}{4t}}\leq 1$ we obtain { $\forall t\in ]0,T]$}
\begin{align*}
\sup_{u>0}p_V(m,m-u,\tilde x;t)\leq \frac{C_T}{\sqrt{4 \pi t}} \int_{\R^d} \phi_{d}(m-x_0^1,\tilde{x}- \tilde{x_0};2t)f_0(x_0)dx_0.
\end{align*}
Integrating with respect to $m$ and $\tilde{x}$  we obtain { $\forall t\in ]0,T]$:}
\begin{align*}
\int_{\R^d} \sup_{u>0}p_V(m,m-u,\tilde x;t){ dmd\tilde x}\leq \frac{C_T}{\sqrt{4 \pi t}} \int_{\R^d} f_0(x_0)dx_0=\frac{C_T}{\sqrt{4 \pi t}}<\infty.
\end{align*}

\end{proof}

The following Lemma is Proposition 4.5 of \cite{coutin:pontier:2020}.
\begin{lemma}\label{lem-c-cont-m}
For  all  $(t,m,\tilde{x}) \in ]0,T] \times{\mathbb R}^d,$ 
 $\lim_{u\rightarrow 0^+} p_V(m,m-u,\tilde{x};t)$
 exists 
 and is denoted by $ p_V(m,m,\tilde{x};t).$
\end{lemma}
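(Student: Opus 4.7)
My plan is to reduce the problem to the Brownian case by means of the series expansion of $p_V$ developed in \cite{coutin:pontier:2020} and a Girsanov-type argument, and then to check the diagonal limit termwise.

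First, I would fix the initial condition $x_0$ and work with the conditional density $p_V(m,x;t,x_0)$. The key structural input is the series representation obtained in \cite{coutin:pontier:2020}: $p_V(\,\cdot\,;t,x_0)$ is an iterated-integral expansion whose "frozen-drift" base term is the joint density of $(\sup_{s\le t}(x_0^1+W_s^1),x_0+W_t,\ldots)$, which, thanks to the reflection principle, has the explicit form
\begin{equation*}
p_0(m,x^1,\tilde{x};t,x_0)=\frac{2(2m-x^1-x_0^1)}{\sqrt{2\pi t^3}}\,e^{-(2m-x^1-x_0^1)^2/(2t)}\,\phi_{d-1}(\tilde{x}-\tilde{x}_0;t)\,\mathbf{1}_{\{m\ge \max(x^1,x_0^1)\}}.
\end{equation*}
On this base term the limit $x^1\uparrow m$ is immediate and yields a finite value.

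Next, each higher-order term in the expansion is an iterated integral of $p_0$ against the drift $B$ (or one of its derivatives), over intermediate times and space variables. For such a term, I would apply dominated convergence in $x^1$: the integrand is continuous in $x^1$ up to the boundary (since $p_0$ is), and the Gaussian upper bound \eqref{pVversusGauss}, together with the uniform Gaussian-type bound on each iterate that appears in \cite{coutin:pontier:2020}, provides an $x^1$-uniform integrable majorant on a one-sided neighborhood $\{x^1\in(m-\delta,m)\}$. Summing these termwise limits requires an interchange between $\lim_{u\to 0^+}$ and $\sum_n$, which I would justify by the same uniform Gaussian domination, whose series is summable by the gamma-function estimate already used in \cite{coutin:pontier:2020} to prove convergence of the parametrix expansion.

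Once the conditional limit $p_V(m,m,\tilde{x};t,x_0):=\lim_{u\to 0^+}p_V(m,m-u,\tilde{x};t,x_0)$ is obtained for each $x_0$, I would pass to the density with random initial law via
\begin{equation*}
p_V(m,x^1,\tilde{x};t)=\int_{\mathbb{R}^d} p_V(m,x^1,\tilde{x};t,x_0)f_0(x_0)\,dx_0.
\end{equation*}
Dominated convergence in $dx_0$ is permitted because the bound \eqref{pVversusGauss} gives an $x^1$-uniform majorant $C_T\phi_{d+1}(m-x_0^1,0,\tilde{x}-\tilde{x}_0;2t)f_0(x_0)$ whose integrability in $x_0$ is exactly what underlies the proof of Lemma \ref{item-c-integ}.

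The main technical obstacle, which is the one settled in Proposition 4.5 of \cite{coutin:pontier:2020}, is the joint control of the series expansion and the boundary limit: one must show that each parametrix iterate extends continuously to $\{x^1=m\}$ and that the resulting bounds on these extensions still sum to a finite quantity. Since the argument is done there, the cleanest route here is to invoke Proposition 4.5 of \cite{coutin:pontier:2020} directly and simply observe that our assumption $B\in C^1_b$ is stronger than the hypothesis used there, so the statement applies verbatim.
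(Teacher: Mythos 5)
Your proposal is correct and, in the end, takes exactly the paper's route: the paper's entire proof of this lemma is the observation that it is Proposition 4.5 of \cite{coutin:pontier:2020}, which is precisely what you invoke verbatim in your final paragraph. The preceding sketch of the reflection-principle base term and termwise dominated convergence along the parametrix expansion is a reasonable outline of how that cited result is obtained, but it becomes redundant once the citation is made.
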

Now, we turn to the continuity of $p_V$  for $x^1 \in ]-\infty, m[.$
\begin{prop}\label{pro-cont-x-1}
 For all
$(t,m,\tilde{x}) \in ]0,T] \times {\mathbb R}^d,$ 
{ $x^1 \mapsto p_V(m,x^1,\tilde{x};t)$ is continuous on $]-\infty, m[.$ }
\end{prop}
This proposition extends the results of Proposition 4.5 in \cite{coutin:pontier:2020}. Its proof follows almost the same lines and is split into three lemmas.
We start by recalling Proposition 4.2 of~\cite{coutin:pontier:2020}:
\begin{lemma}
\label{rem-dens-mal} 
On $[0,T]\times {\mathbb R}^{d+1}$
\begin{align}
\label{rep-p_v}
&p_V=p_0-\sum_{k=1}^{d+1}\left( { p^{k,\alpha}} +{ p^{k,\beta}}\right)
\end{align}
where { the various $p$ are defined as}
\begin{align*}
 &p_0(m,x;t):={ \int_{{\mathbb R}^d}p_{W^{*1},W}(m-x_0^1,x-x_0;t)f_0(x_0)dx_0},
  \\
& p^{k,\alpha}(m,x;t):=\int_0^t\int_{\R^{d+1}} 
{\mathbf 1}_{m>b}
B^k(a)\partial_{x^k}p_{W^{*1},W}(m-a^1,x-a;t-s)
p_V(b,a;s)dbdads,
\\
& p^{k,\beta}(m,x;t)
:=\int_0^t \int_{\mathbb R^{d+1}}
{\mathbf 1}_{m>b}
 B^k(a)\partial_{x^k}p_{W^{*1},W}(b-a^1,x-a;t-s)p_V(m,a;s)dbda ds
\end{align*}
{ where $\partial_{x^k}$ is the derivative with respect to  $k=m, x^1,\dots,x^d,$
and $B^m=B^1$.}
\end{lemma}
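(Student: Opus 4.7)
The plan is to prove the integral decomposition \eqref{rep-p_v} by a first-passage / parametrix argument in which the Brownian joint law $p_{W^{*1},W}$ plays the role of a frozen kernel. I would start by fixing $t>0$ and a test function $\Phi \in C^2_c(\mathbb{R}^{d+1})$, and introducing the auxiliary function
$$
u(s,m,x) := \int_{b \ge x^1}\int_{\mathbb{R}^d}\Phi\bigl(m \vee b,\, x+y-x\bigr)\, p_{W^{*1},W}(b-x^1,y-x;t-s)\,db\,dy,
$$
which represents the expected value of $\Phi$ at time $t$ if the process were to evolve as a driftless Brownian motion from $(m,x)$ on the interval $[s,t]$. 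By construction, $u$ is the solution of the Brownian analogue of PDE~\eqref{edp1m} with terminal condition $u(t,\cdot)=\Phi$, i.e.\ it satisfies $\partial_s u+\frac{1}{2}\Delta_x u = 0$ on $\{m>x^1\}$ together with the boundary identity $\frac{1}{2}\partial_m u(s,m,m,\tilde x) = -\partial_{x^1}u(s,m,m,\tilde x)$ inherited from the reflection principle.

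Next, I would apply Itô's formula to $s\mapsto u(s,M_s,X_s)$. The terms $\partial_s u$ and $\frac{1}{2}\Delta_x u$ cancel by the PDE above, leaving three surviving contributions: (i) the martingale part $\int_0^t \nabla_x u(s,M_s,X_s)\cdot dW_s$, whose expectation vanishes; (ii) the genuine drift term $\int_0^t B(X_s)\cdot \nabla_x u(s,M_s,X_s)\,ds$, coming from $dX_s = B(X_s)\,ds+dW_s$; and (iii) the local-time term on the diagonal $\{M_s = X_s^1\}$, coming from $dM_s$, which produces $\frac{1}{2}\int_0^t \partial_m u(s,M_s,M_s,\tilde X_s)\,dL_s$ for $L$ the associated local time (exactly the $\partial_m$ contribution of Theorem 2.3 in \cite{coutin:pontier:2020}). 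Taking expectations with $X_0\sim f_0$ and using $u(t,M_t,X_t)=\Phi(M_t,X_t)$ and $u(0,\cdot)=P^0_t \Phi$, I obtain
$$
\mathbb{E}[\Phi(M_t,X_t)] = \mathbb{E}[P^0_t\Phi(X_0^1,X_0)] + \sum_{k=1}^d \int_0^t \mathbb{E}\bigl[B^k(X_s)\partial_{x^k}u(s,M_s,X_s)\bigr]\,ds + \text{(local-time term)}.
$$

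The third step is to identify each of the three terms as a density. The first term, written via $f_0$ and the explicit Brownian joint density, is exactly $p_0(m,x;t)$ tested against $\Phi$. For the drift sum, I expand $\partial_{x^k}u(s,m,x) = \int \Phi(m',x')\, \partial_{x^k}p_{W^{*1},W}(m'-x^1,x'-x;t-s)\,dm'\,dx'$ on the interior $\{m>x^1\}$ and condition on $(M_s,X_s)=(b,a)$ using the density $p_V(b,a;s)$; Fubini and duality in $\Phi$ then yield precisely $-p^{k,\alpha}(m,x;t)$ for $k=1,\dots,d$. The local-time term is handled in the same spirit: writing $\partial_m u$ as a derivative of $p_{W^{*1},W}$ evaluated on the diagonal and using $B^m = B^1$ (so that $m$ plays the role of an additional coordinate with the same drift as $x^1$), conditioning on the state $(m,a)$ at time $s$ produces the $p^{k,\beta}$ term, with the intermediate variable $b$ ranging over values strictly less than $m$ (hence the indicator $\mathbf 1_{m>b}$), corresponding to configurations where the global supremum is already attained before time $s$.

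The main obstacle is to make the Itô computation rigorous near the diagonal $\{m=x^1\}$: the function $u$ inherits from the reflection principle only $C^{1}$ regularity up to the boundary, and the derivatives of $p_{W^{*1},W}$ are Gaussian-singular in $t-s$. I would regularize by working first with a mollified drift $B_\varepsilon$ and a smoothed running supremum $M^\varepsilon_t = \frac{1}{\varepsilon}\log\int_0^t e^{\varepsilon X_s^1}\,ds$, carry out the Itô and duality identification in this smooth setting, and then pass to the limit using the Gaussian bounds \eqref{maj-garro-gamma-1}--\eqref{maj-garro-der-gamma-1} together with the a priori estimate \eqref{pVversusGauss} on $p_V$ to justify the integrability of each term in \eqref{rep-p_v}. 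A density argument over $\Phi\in C^2_c(\mathbb{R}^{d+1})$ finally identifies $p_V$ with $p_0 - \sum_{k=1}^{d+1}(p^{k,\alpha}+p^{k,\beta})$ as claimed.
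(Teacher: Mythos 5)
You should first be aware that the paper does not prove this statement at all: Lemma \ref{rem-dens-mal} is recalled verbatim as Proposition 4.2 of \cite{coutin:pontier:2020}, so your attempt has to be judged as a re-derivation of that result. Your overall strategy (a Duhamel/parametrix identity around the Brownian pair $(W^{*1},W)$, obtained by applying It\^o's formula to $s\mapsto u(s,M_s,X_s)$ with $u$ the backward function of the driftless pair and then identifying densities by duality in $\Phi$) is a sensible route, but as written it contains a structural error that prevents it from producing the stated formula.

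The boundary identity you assert, $\tfrac12\partial_m u(s,m,m,\tilde x)=-\partial_{x^1}u(s,m,m,\tilde x)$, is false. From your own definition of $u$ one gets $\partial_m u(s,m,x)=\mathbb{E}\left[\partial_1\Phi(m,x+W_{t-s})\mathbf{1}_{\{W^{*1}_{t-s}\le m-x^1\}}\right]$, which vanishes at $m=x^1$ since $W^{*1}_{t-s}>0$ a.s.; taking $\Phi$ independent of its first argument gives $\partial_m u\equiv 0$ while $\partial_{x^1}u\neq 0$, contradicting your relation. The correct boundary behaviour, $\partial_m u=0$ on the diagonal, is precisely what makes the $dM_s$ (local-time) term in It\^o's formula vanish. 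This is fatal to your third step, because you propose to extract the $p^{k,\beta}$ terms and the $k=m$ term (with $B^m=B^1$) from that local-time contribution: it is identically zero, and in any case it carries no factor $B^k(a)$, so it could never generate integrals weighted by the drift. In the actual derivation all $d+1$ derivative terms come from the drift alone, by differentiating in $a$ the \emph{two-part} transition kernel of the Brownian pair, $K(b,a;dm',dx';\tau)=\delta_b(dm')\left[\int^{\,b-a^1}p_{W^{*1},W}(v,x'-a;\tau)dv\right]dx'+\mathbf{1}_{\{m'>b\}}p_{W^{*1},W}(m'-a^1,x'-a;\tau)\,dm'dx'$: the chain rule in $a^1$ applied to $p_{W^{*1},W}(m'-a^1,\cdot)$ yields both the $\partial_{x^1}$ and the $\partial_m$ derivatives (whence $k=m$ with $B^m=B^1$), while differentiating the singular part (supremum not renewed on $[s,t]$) yields the $p^{k,\beta}$ terms, with the dummy variable $b<m$ and the weight $p_V(m,a;s)$. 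Your expansion of $\partial_{x^k}u$ in step 3 uses only the absolutely continuous part and drops the indicator $\mathbf{1}_{\{m'>m\}}$, so even the $\mathbf{1}_{m>b}$ in $p^{k,\alpha}$ is not accounted for. (Secondary issues: the soft-max $\frac1\varepsilon\log\int_0^t e^{\varepsilon X^1_s}ds$ approximates $M_t$ as $\varepsilon\to\infty$, not $\varepsilon\to 0$, and the entire limiting/regularization step, which is the real technical content, is left unargued.) The efficient fix is either to redo the computation with the correct kernel decomposition and $\partial_m u=0$ on the diagonal, or simply to cite Proposition 4.2 of \cite{coutin:pontier:2020} as the paper does.
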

Using the same lines as the proof of Lemma 4.5 of \cite{coutin:pontier:2020}, one can prove the following Lemma:
\begin{lemma}
For all $(t,m,\tilde{x}),$ $x^1 \mapsto p_0(m,x^1,\tilde{x};t)$ is continuous on $]-\infty,m[.$
\end{lemma}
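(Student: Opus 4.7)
The plan is to exploit the explicit form of the joint density of Brownian motion and its running supremum, together with independence of the coordinates, and then conclude by dominated convergence.

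First, I would decompose $p_{W^{*1},W}$ into its one-dimensional and $(d-1)$-dimensional components. Since $W^1$ and $\tilde{W}=(W^2,\dots,W^d)$ are independent, and $W^{*1}$ depends only on $W^1$, we can factor
\[
p_{W^{*1},W}(\mu,\xi^1,\tilde\xi;t)=p_{W^{*1},W^1}(\mu,\xi^1;t)\,\phi_{d-1}(\tilde\xi;t),
\]
where $p_{W^{*1},W^1}$ is the classical Lévy density given by the reflection principle,
\[
p_{W^{*1},W^1}(\mu,\xi;t)=\frac{2(2\mu-\xi)}{\sqrt{2\pi t^3}}\exp\!\left(-\frac{(2\mu-\xi)^2}{2t}\right)\mathbf{1}_{\{\mu\ge \max(0,\xi)\}}.
\]
Substituting, the integrand in $p_0$ becomes
\[
p_{W^{*1},W^1}(m-x_0^1,x^1-x_0^1;t)\,\phi_{d-1}(\tilde x-\tilde x_0;t)\,f_0(x_0).
\]

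Second, I would verify pointwise continuity of the integrand with respect to $x^1$. For fixed $(m,\tilde x,t)$ and $x_0\in\mathbb{R}^d$, the factor $\phi_{d-1}(\tilde x-\tilde x_0;t)\,f_0(x_0)$ does not depend on $x^1$, and the map $x^1\mapsto p_{W^{*1},W^1}(m-x_0^1,x^1-x_0^1;t)$ is continuous on $(-\infty,m)$: indeed, the condition $m-x_0^1\ge x^1-x_0^1$ simplifies to $m\ge x^1$, which is satisfied strictly on $(-\infty,m)$, and on this set the Lévy density is a smooth function of $\xi=x^1-x_0^1$.

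Third, I would establish a dominating function. Fix $x^1_\star\in(-\infty,m)$ and let $\eta:=(m-x^1_\star)/2>0$; then for any $x^1\in[x^1_\star-\eta,x^1_\star+\eta]$ we have $2m-x^1-x_0^1\ge \eta+(m-x_0^1)\ge 0$ on the support, and using $u\,e^{-u^2/(2t)}\le C_t e^{-u^2/(4t)}$ one obtains a bound
\[
p_{W^{*1},W^1}(m-x_0^1,x^1-x_0^1;t)\le \frac{C'_t}{\sqrt{t}}\,\phi_1\!\left(m-x_0^1;2t\right)\mathbf{1}_{\{m>x_0^1\}},
\]
uniformly in $x^1$ in that neighborhood. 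Multiplying by $\phi_{d-1}(\tilde x-\tilde x_0;t)$ and integrating against $f_0\in L^1(\mathbb{R}^d)$ gives a finite majorant since $\|f_0\|_{L^1}<\infty$ and the Gaussian factors are bounded in $x_0$. The Lebesgue dominated convergence theorem then yields continuity of $x^1\mapsto p_0(m,x^1,\tilde x;t)$ on $(-\infty,m)$.

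The only mildly delicate point is to obtain a bound that is uniform in a neighborhood of $x^1_\star$ and integrable against $f_0$; this is handled by keeping away from the diagonal $\{x^1=m\}$ via the cushion $\eta$, which controls the factor $2\mu-\xi$ from below and lets one replace the sharp Gaussian by a slightly wider one. No regularity of $f_0$ beyond $L^1$ is needed.
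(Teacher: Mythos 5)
Your argument is correct: the factorization of $p_{W^{*1},W}$ into the explicit L\'evy density times a $(d-1)$-dimensional Gaussian, the pointwise continuity in $x^1$ away from the diagonal $\{x^1=m\}$, the uniform Gaussian majorant obtained from $u\,e^{-u^2/(2t)}\le C_t e^{-u^2/(4t)}$ and $f_0\in L^1$, and dominated convergence together give the claim. This is essentially the route the paper intends — it simply defers to the explicit-kernel computation of Lemma 4.5 in \cite{coutin:pontier:2020} — so your proof is a correct, self-contained version of the same approach.
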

Using the same lines as the proof of Lemma 4.6 of \cite{coutin:pontier:2020}, one can prove the following Lemma:
\begin{lemma}
For all $(t,m,\tilde{x}),$ $x^1 \mapsto p^{k,\alpha}(m,x^1,\tilde{x};t)$ is continuous on $]-\infty,m[$
for $k=m,1,...,d.$
\end{lemma}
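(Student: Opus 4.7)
The plan is to follow the same strategy as Lemma 4.6 of \cite{coutin:pontier:2020}, where the continuity of $m \mapsto p^{k,\alpha}(m,x;t)$ was established, and adapt the argument to the variable $x^1$ on the open half-line $]-\infty, m[$. Fix $(t,m,\tilde x)$ and a point $x^1_0 < m$; choose $\eta>0$ such that the compact interval $I = [x^1_0 - \eta, x^1_0 + \eta]$ satisfies $I \subset ]-\infty, m - 2\eta[$. We will show that the integrand in the definition of $p^{k,\alpha}$ converges pointwise as $x^1 \to x^1_0$ inside $I$, and exhibit a dominating function in $L^1(ds\, db\, da)$ that is independent of $x^1 \in I$. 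The conclusion then follows from Lebesgue's dominated convergence theorem.

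For pointwise convergence, recall that $p_{W^{*1},W}$ is given by an explicit closed-form formula obtained from the reflection principle. The map $(u,v^1,\tilde v) \mapsto \partial_{x^k} p_{W^{*1},W}(u,v^1,\tilde v; t-s)$ is smooth whenever $v^1 < u$. Since on the domain of integration we have $m > b \ge a^1$ (this inequality comes out of the bound \eqref{pVversusGauss} applied to $p_V(b,a;s)$), and since $x^1 - a^1 < m - a^1$ for $x^1 \in I$ and $a^1 < m$, the map $x^1 \mapsto \partial_{x^k} p_{W^{*1},W}(m - a^1, x^1 - a^1, \tilde x - \tilde a; t-s)$ is continuous at $x^1_0$ for every fixed $(b,a,s)$.

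For the dominating bound, I would combine three ingredients: $B^k$ is bounded on $\R^d$; the derivatives $\partial_{x^k} p_{W^{*1},W}$ admit Gaussian-type estimates away from the diagonal $\{x^1 = m\}$, which can be made uniform for $x^1 \in I$ thanks to the separation $m - x^1 \ge \eta$; and the bound \eqref{pVversusGauss} gives $p_V(b,a;s) \le C_T\phi_{d+1}(b-x_0^1,b-a^1,\tilde a-\tilde x_0;2s)\mathbf 1_{b > \max(a^1, x_0^1)}$ after integration against $f_0$. Then I bound the integrand uniformly for $x^1 \in I$ by an expression of the form
\[
\frac{C}{(t-s)^{(d+2)/2}} \exp\!\left(-\frac{(m - a^1)^2 + \|\tilde x - \tilde a\|^2}{c(t-s)}\right) p_V(b,a;s),
\]
and use the convolution properties of Gaussian kernels (cf.\ Remark \ref{phi1phi2}) to integrate successively in $b$, $a$, and finally $s$.

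The main obstacle is the short-time singularity at $s=t$: the factor $(t-s)^{-(d+2)/2}$ is not integrable on its own, so one must exploit the exponential factor $\exp(-(m-a^1)^2/[c(t-s)])$ together with the Gaussian decay of $p_V$ in the variables $b,a$ to produce an integrable time factor after the spatial integration. This is precisely where the argument of Lemma 4.6 of \cite{coutin:pontier:2020} can be mimicked; the key point is that the separation $m - x^1 \ge \eta > 0$ ensures the relevant exponential factor remains nondegenerate uniformly in $x^1 \in I$, so the same majorization yields a time bound of the form $C(t-s)^{-\alpha}$ with $\alpha < 1$, which is integrable on $[0,t]$. Once this uniform bound is in hand, dominated convergence closes the proof for each $k \in \{m, 1, \ldots, d\}$.
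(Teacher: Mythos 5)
Your proposal is correct and follows essentially the route the paper takes, namely reproducing the dominated-convergence and Gaussian-domination argument of Lemma 4.6 of \cite{coutin:pontier:2020}, with the boundedness of $B$, the bound \eqref{pVversusGauss}, and the separation $m-x^1\ge \eta$ providing a dominating function uniform in $x^1$. One small precision: the factor that renders the time integral convergent after the spatial integrations is $\exp\left(-\frac{(m-x^1)^2}{c(t-s)}\right)$ (omitted from your displayed dominating function, though implicitly restored when you invoke the separation $m-x^1\ge\eta$), not $\exp\left(-\frac{(m-a^1)^2}{c(t-s)}\right)$, which is consumed by the integration in $a$ against the Gaussian bound on $p_V$.
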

Unfortunately, for the continuity of the maps $p^{k,\beta}$, we cannot use the same arguments as the proof of Lemma 4.7 of \cite{coutin:pontier:2020},{ but we can prove the following.}

\begin{lemma}
\label{lem-cont-beta}
For all $(t,m,\tilde{x}),$ $x^1 \mapsto p^{k,\beta}(m,x^1,\tilde{x};t)$ is continuous on $]-\infty,m[$
for $k=m,1,...,d.$
\end{lemma}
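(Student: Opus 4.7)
The plan is to apply the dominated convergence theorem to the integral
\[
p^{k,\beta}(m,x^1,\tilde{x};t) = \int_0^t \int_{\R^{d+1}} \1_{m > b}\, B^k(a)\, \partial_{x^k} p_{W^{*1}, W}(b - a^1, x - a;\, t - s)\, p_V(m, a; s)\, db\, da\, ds.
\]
Fix $t \in (0,T]$, $m \in \R$, $\tilde{x} \in \R^{d-1}$ and an interior point $x^1_0 < m$. Choose $\delta > 0$ with $x^1_0 + \delta < m$ and consider $x^1 \in I := [x^1_0 - \delta, x^1_0 + \delta]$.

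The key observation is that the only $x^1$-dependence in the integrand lies in the factor $\partial_{x^k} p_{W^{*1}, W}(b - a^1, x - a; t - s)$, since $B^k(a)$, $p_V(m,a;s)$ and $\1_{m>b}$ do not depend on $x^1$. Using the explicit formula for the joint density of a Brownian motion and its running supremum (obtained via the reflection principle), this map is smooth in $x^1$ on the set where $b > \max(a^1, x^1)$, so for almost every $b$ in the integration domain it is continuous at $x^1 = x^1_0$.

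For the dominating function, I would invoke Gaussian estimates of the form $|\partial_{x^k} p_{W^{*1}, W}(b - a^1, x - a; t - s)| \leq C (t - s)^{-(d+1)/2}\exp\bigl(-c\,\|x - a\|^2/(t-s)\bigr)$, which hold uniformly in $x^1 \in I$. Combined with $|B^k| \leq \|B\|_\infty$ and the Gaussian bound \eqref{pVversusGauss} on $p_V(m,a;s)$, carrying out the $db$-integration (on the range $\max(a^1, x^1) < b < m$, which is controlled uniformly in $x^1 \in I$) and then the $da$-integration via convolution of Gaussian densities should yield a bound of order $C(t-s)^{-1/2}$ times a factor that stays bounded in $s$, and hence is integrable on $(0,t)$. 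Dominated convergence then gives continuity of $x^1 \mapsto p^{k,\beta}(m,x^1,\tilde{x};t)$ at $x^1_0$, and since $x^1_0 \in (-\infty, m)$ is arbitrary, the lemma follows.

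The principal obstacle is the careful tracking of the time singularities through the two Gaussian convolutions, distinguishing the case $k = 1$ (where $\partial_{x^1}$ produces an additional $(t-s)^{-1/2}$ factor from the reflected Gaussian structure of $p_{W^{*1},W}$) from the smoother cases $k = 2, \ldots, d$. The hypothesis $x^1_0 < m$ is essential: it keeps us bounded away from the boundary of the support of the kernel, avoiding precisely the non-integrable singularities arising on the diagonal $x^1 = m$ which obstruct the argument used for Lemma 4.7 in \cite{coutin:pontier:2020}.
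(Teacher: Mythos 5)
Your reduction (freeze $(t,m,\tilde x)$, observe that only the kernel $\partial_{x^k}p_{W^{*1},W}$ depends on $x^1$, check its a.e.\ continuity, pass to the limit under the integral) is the same skeleton as the paper's proof, but the convergence theorem you invoke is exactly where the argument breaks down. For dominated convergence you need one integrable majorant valid for every $x^1$ in a fixed interval $I=[x^1_0-\delta,x^1_0+\delta]$, $x^1_0+\delta<m$. The correct kernel bound (inequality (55) of \cite{coutin:pontier:2020}, used in the paper) is $|\partial_{x^k} p_{W^{*1},W}(b-a^1,x-a;t-s)|\le \frac{D}{\sqrt{t-s}}\,\phi_{d+1}\bigl(b-a^1,\,b-x^1,\,\tilde x-\tilde a;2(t-s)\bigr)$; note the extra factor $(t-s)^{-1/2}$ and the Gaussian decay in $b$, both missing from the estimate you quote, whose power $(t-s)^{-(d+1)/2}$ is too weak at the on-diagonal singularity. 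The gain of one half-power of $(t-s)$ that you expect from the $db$--$da$ convolutions uses the factor $\phi_1(b-x^1;2(t-s))$ at the \emph{specific} value of $x^1$; an $x^1$-independent majorant forces you to replace it by $\sup_{x^1\in I}\phi_1(b-x^1;2(t-s))$, which equals $(4\pi(t-s))^{-1/2}$ on the whole $b$-interval $I$ of fixed length $2\delta$. After integrating in $b,a,x_0$ (using \eqref{pVversusGauss}) the resulting majorant behaves like $(t-s)^{-1}$ near $s=t$, which is not integrable; and this is not an artefact of crude bounding: on the region $s$ near $t$, $b\in I$, $|b-a^1|\sim\sqrt{t-s}$, $\|\tilde a-\tilde x\|\lesssim\sqrt{t-s}$, one has $\sup_{x^1\in I}|\partial_{x^1}p_{W^{*1},W}|\gtrsim (t-s)^{-(d+2)/2}$ on a set of $(b,a)$-measure of order $\delta\,(t-s)^{d/2}$, while $p_V(m,a;s)$ stays bounded below there, so the envelope $\sup_{x^1\in I}|\mbox{integrand}|$ has a genuinely divergent (logarithmic) time integral. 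The hypothesis $x^1<m$ only gives a.e.\ continuity of the kernel in $x^1$ (continuity for $x^1<b$ on the support $\{a^1<b<m\}$); it does not remove this interior singularity at $b\approx a^1\approx x^1$, $s\approx t$, so no dominating function exists and plain dominated convergence cannot close the proof.

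The paper circumvents precisely this point by replacing domination with uniform integrability. It introduces the finite measure $\nu(ds\,db\,da\,dx_0)=\1_{]0,t]}(s)\,\1_{a^1<b<m}\,p_V(m,a,s;x_0)f_0(x_0)\,ds\,da\,db\,dx_0$, proves the uniform bound $\sup_{x^1<m}\int |B^k(a)\,\partial_{x^k}p_{W^{*1},W}(b-a^1,x^1-a^1,\tilde x-\tilde a;t-s)|^{\varepsilon}\,d\nu<\infty$ for an exponent $1<\varepsilon<\frac{d+3}{d+2}$ chosen so that the induced time singularity $(t-s)^{-\gamma/2}$ with $\gamma=\varepsilon+(\varepsilon-1)(d+1)<2$ remains integrable, and then concludes by the Vitali convergence theorem together with the a.e.\ continuity of the kernel. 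To repair your proposal you would have to do the same (or show by hand that the contribution of $s\in(t-\eta,t)$ is small uniformly in $x^1\in I$, which is the same uniform-integrability statement in disguise), rather than appeal to a dominating function.
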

 \begin{proof} Instead of using explicit computations as    the proof of Lemma 4.7  of \cite{coutin:pontier:2020}, we use Vitali Convergence theorem.
Let us introduce the measure $\nu$ on $[0,t] \times \R^{2d+1}$ defined by
\begin{align*}
\nu(dsdbdadx_0) = {\mathbf 1}_{]0,t]}(s) {\mathbf 1}_{a^1 <b<m} p_V(m,a,s;x_0) f_0(x_0)dsdadbdx_0.
\end{align*}

(i) We show that $\nu$ is a finite measure on $[0,t] \times \R^{2d+1}$.
Recall that from the upper bound \eqref{pVversusGauss} on $p_V$, we get:
\begin{align*}
\nu( [0,t] \times \R^{2d+1}) &\leq  C_T\int_0^t \int_{\R^{2d+1}}{\mathbf 1}_{a^1<b<m}\phi_{d+1}(m-x_0^1,m-a^1,\tilde{a}- \tilde{x_0};2s)f_0(x_0) 
 ds dadb dx_0
 \\
 &\leq  C_T \int_0^t\int_{\R^{2d}}(m-a^1)_{{ +}}\phi_{d+1}(m-x_0^1,m-a^1,\tilde{a}- \tilde{x_0};2s)f_0(x_0) 
 ds dadx_0
\end{align*}
after integrating with respect to $b.$ Integrating with respect to $\tilde{a}$:
\begin{align*}
\nu( [0,t] \times \R^{2d+1}) &\leq  C_T\int_0^t  \int_{\R^{2}}(m-a^1)_{{ +}}\phi_{2}(m-x_0^1,m-a^1;2s)f_0(x_0) 
 ds da^1dx_0.
\end{align*}
Then integrating with respect to $a^1$ and using  $\sqrt{\frac{s}{\pi}}\phi_{1}(m-x_0^1;2s)\leq 1/2\pi$:
\begin{align}\label{maj-nu-finie}
\nu( [0,t] \times \R^{2d+1}) &\leq  C_T\int_0^t  \int_{\R}{\sqrt{\frac{s}{\pi}}}\phi_{1}(m-x_0^1;2s)f_0(x_0) 
 ds dx_0\nonumber
 \\
 &\leq \frac{C_T}{2\pi}\int_0^t \int_{\R} f_0(x_0) dx_0ds\leq  \frac{TC_T}{2\pi}<\infty
\end{align}

That means that $\nu$ is a finite measure on $[0,t] \times \R^{2d+1}.$
\\
(ii) We can now prove the continuity on 
$(-\infty,m)$ of the application $x^1\mapsto p^{k,\beta}(m,x^1,\tilde{x};t)$.
 
Note that 
\begin{align*}
 p^{k,\beta}(m,x^1,\tilde{x};t)=\int_{[0,T] \times \R^{2d+1}}{ \1_{b<m}}B^k(a) \partial_{x^k} p_{W^{*1},W}(b-a^1,x-a;t-s)  \nu(dsdbdadx_0).
 \end{align*}
 According to identity (57) of \cite{coutin:pontier:2020}
 \begin{align*}
 p_{W^{*1},W}(b-a^1,x^1-a^1;t)=-2 \partial_{x^1} \phi_d(2b-a^1-x^1,\tilde{x}-\tilde{a};t){\mathbf 1}_{b \geq \max (a^1,x^1)}.
 \end{align*}
 Thus, $x^1 \mapsto  \partial_{x^k} p_{W^{*1},W}(b-a^1,x^1-a^1,\tilde{x}-\tilde{a};t)$ is continuous for { $x^1< b.$}
\\
Let
$$I(x^1):=\int_{[0,t] \times {\mathbb R}^{2d+1}}|B^k(a)  \partial_{x^k} p_{W^{*1},W}(b-a^1,x^1-a^1,\tilde{x}-\tilde{a};t-s)|^{\varepsilon} \nu(dsdbdadx_0).
$$
Assume for a while that 
 \begin{align}
 \label{maj-ui}
\sup_{x^1 <m}I(x^1)  <\infty
\end{align}
holds for $1<\varepsilon < \frac{d+3}{d+2}$.
In this case, using Vitali convergence Theorem, we have that for all $(t,m,\tilde{x}),$ the function $x^1 \mapsto p^{k,\beta}(m,x^1,\tilde{x};t)$ is continuous on $]-\infty,m[$
for $k=m,1,...,d.$
\\
(iii) It remains to prove the estimate \eqref{maj-ui}.
Recall that from the upper bound (55) of Lemma A.2 of \cite{coutin:pontier:2020}, there exists a constant $D$ such that for $x=b,a^1,...,a^d$
\begin{align*}
|\partial_x p_{W^{*1},W}(b,a;t) | \leq \frac{D}{\sqrt{t}}\phi_{d+1}(b,b-a^1,\tilde{a}:2t){\mathbf 1}_{b>\max(a^1,0)}.
\end{align*}
Let $\gamma= \varepsilon  +(\varepsilon-1)(d+1) < 2.$  Since
  $\varepsilon>1,$  $[\phi_{d+1}(.;t)]^\varepsilon \leq [\sqrt{2\pi t}]^{(1-\varepsilon)(d+1)}\phi_{d+1}(.;t)$ 
  and $B$ { is bounded so: }
{\begin{align*}
|B^k(a) \partial_x p_{W^{*1},W}(b-a^1,x^1-a^1,\tilde{x}- \tilde{a};t-s) |^{\varepsilon} \leq \frac{D^{\varepsilon}\|B\|_{\infty}^{\varepsilon}}{\sqrt{(t-s)^{\gamma}}}\phi_{d+1}(b-a^1,b-x^1,\tilde{a}:2(t-s)),
\end{align*}
}
Then,  up to a multiplicative constant,  using once again \eqref{pVversusGauss}  to bound the density of the measure~$\nu$, we get:
\begin{align*}
I(x^1) \leq 
\int_{[0,t] \times {\mathbb R}^{2d+1}}&
\frac{D^{\varepsilon}\|B\|_{\infty}^{\varepsilon}}{\sqrt{(t-s)^{\gamma}}}\phi_{d+1}(b-a^1,b-x^1,\tilde{a}-\tilde{x};2(t-s))
\\
&
{ C_T}\phi_{d+1}(m-x_0^1,m-a^1,\tilde{a}- \tilde{x_0};2s)f_0(x_0) 
 ds dadb dx_0.
 \end{align*}
Recall Remark \ref{phi1phi2}
$
\int_{\R^d} \phi_d(x-a;s)\phi_d(x-b;r)dx= \phi_d(b-a;s+r)
$ that we will use  several times.
We integrate with respect to $a$ so
 \begin{align*}
I(x^1) \leq 
\int_{[0,t] \times {\mathbb R}^{d+2}}&
\frac{D^{\varepsilon}\|B\|_{\infty}^{\varepsilon}}{\sqrt{(t-s)^{\gamma}}}\phi_{1}(b-x^1;2(t-s))
\\&
\phi_{1}(m-x_0^1;2s)\phi_{d-1}(\tilde{x}-\tilde{x_0};4t)\phi_1(b-m;4t)f_0(x_0) 
 ds db dx_0,
 \end{align*}
then, we integrate with respect to $b:$
  \begin{align*}
I(x^1) \leq 
\int_{[0,t] \times {\mathbb R}^{d+2}}&
\frac{D^{\varepsilon}\|B\|_{\infty}^{\varepsilon}}{\sqrt{(t-s)^{\gamma}}}\phi_{1}(m-x^1;2(3t-s))
\\&
\phi_{1}(m-x_0^1;2s)\phi_{d-1}(\tilde{x}-\tilde{x_0};2t)f_0(x_0) 
 ds  dx_0.
 \end{align*}
 Since $s<t$ then $3t-s >t$  then
 $$\phi_{1}(m-x^1;2(3t-s))\phi_{1}(m-x_0^1;2s) \phi_{d-1}(\tilde{x}-\tilde{x_0};2t) \leq [\sqrt{4\pi t}]^{-d}[\sqrt{4 \pi s}]^{-1}$$ and
 \begin{align*}
 I(x^1) \leq \int_0^t \int_{R^d} \frac{D^{\varepsilon}\|B\|_{\infty}^{\varepsilon}}{\sqrt{(4\pi t)^d}\sqrt{(t-s)^{\gamma}}\sqrt{4\pi s}}f_0(x_0) dx_0 ds.
 \end{align*}
 Since $\int_{R^d} f_0(x_0)dx_0=1,$ $\forall x^1<m$:
 \begin{align*}
 I(x^1) \leq \int_0^t  \frac{D^{\varepsilon}\|B\|_{\infty}^{\varepsilon}}{\sqrt{(4\pi t)^d}\sqrt{(t-s)^{\gamma}}\sqrt{4 \pi s}} ds  
 \end{align*}
 { is finite since  $\gamma <2$ 
 and so $\sup_{x^1<m} I(x^1)<\infty,$ 
and  the proof of \eqref{maj-ui} is achieved, and so is the one of Lemma \ref{lem-cont-beta}.}
 \end{proof}   
 
 The proof of Proposition \ref{pro-cont-x-1} is achieved using Lemmas \ref{rem-dens-mal} to \ref{lem-cont-beta}.

 The proof of Proposition \ref{itemc} is then a consequence of Lemmas \ref{item-c-integ} and \ref{lem-c-cont-m} and Proposition~\ref{pro-cont-x-1}.

{ The following Proposition concludes this section:
\begin{prop}
The density of probability $p_V$, density of the law of $(M_t,X_t)$ where
\begin{eqnarray*}
X_t&=&X_0+\int_0^tB(X_s)ds+W_t,\\
M_t&=& \sup_{s \le t} X_t^1,
\end{eqnarray*} 
with $X_0$ independent of $W$ with density
$f_0 \in L^1({\mathbb R}^d)\cap L^2({\mathbb R}^d)$,
is an element of the set ${\cal X}$. 
\end{prop}
 This is a consequence of Propositions 
\ref{itema}, \ref{itemb}, \ref{itemc}, and Theorem \ref{mthm} is proved.}

\section{Tools}
\label{technical:section}

In this section, we recall and prove some technical results needed throughout this paper.

\begin{prop}
\label{lem-semi-groupe-dual}
The adjoint operator 
$${\cal L}^*f (x) =  -B(x) \cdot \nabla f(x)-f(x){ \sum_{k=1}^d\partial_{x^k} B^k(x)}+{\frac12}\Delta f(x), $$
 is closed and generates a semi-group $(Q_t)_{t \ge 0}$ that is strongly continuous and bounded
 { with kernel $\Gamma$} on $L^2({\mathbb R}).$ 
 Besides, for $0 \le t \le T$, $x\in \R^d$ and $f\in L^2(\R^d)$ , 
 \begin{align}
 \label{defQ}
 Q_t(f)(x):=E\left[f(Y^x_t)\exp\left(-\int_0^t\sum_{k=1}^d \partial_{x^k} B^k(Y^x_u)du\right)\right]={\int_{\R^d}f(y)\Gamma(x,y;t)dy}.
\end{align}
{  where $dY^x_t={-B(Y_t^x)} dt + dW_t,~~Y^x_0=x.$}

\end{prop}

 \begin{proof}

For the existence of $\Gamma$, we refer to Theorem 3.5 in Garroni-Menaldi \cite{garroni:menaldi:1992} page 186. See also Theorems 1 and 2 in Krylov \cite{krylov:book} page 68.
%
\\
(i) The fact that ${\cal L}^*$ generates $(Q_t)_{t\ge 0}$ is a consequence of It\^o's formula, more specifically, the Feynman-Kac representation.
\\
The operator ${\cal L}^*$ is densely defined since $C^2_b$ is dense in $L^2$. To show that it is a closed operator, let $f_n \rightarrow 0$ in $L^2$, assume also that ${\cal L}f_n$ converges in $L^2$, then, we have $\lim_{n \rightarrow +\infty}{\cal L}f_n= 0$ in $L^2$, by dominated convergence theorem, since the coefficients are bounded. See also Theorem 1 and 2 in chapter 7 in Evans \cite{evans}. \\
(ii) 
The semi-group property can be deduced from the definition of $(Q_t)_{t \ge 0}$ above and the Markov property { of the process $Y$}:
$$
Y_t^{Y_s^x} \overset{(d)}{=} Y_{t+s}^x.
$$
Indeed, we write:
\begin{eqnarray*}
&&Q_t\circ Q_s(f)(x)\\
&=&E\left[Q_s(f)(Y^x_t)\exp\left(-\int_0^t\sum_{k=1}^d \partial_{x^k} B^k(Y^x_u)du\right)\right]\\
&=&E\left[E\left[f(Y^{Y_t^x}_s)\exp\left(-\int_0^s\sum_{k=1}^d \partial_{x^k} B^k(Y^{Y_t^x}_u)du\right)\right] \exp\left(-\int_0^t\sum_{k=1}^d \partial_{x^k} B^k(Y^x_u)du\right)\right]\\
&=&E\left[f(Y^{x}_{t+s})\exp\left(-\underbrace{\int_0^s\sum_{k=1}^d \partial_{x^k} B^k(Y^{x}_{t+u})du}_{=\int_t^{s+t}\sum_{k=1}^d \partial_{x^k} B^k(Y^{x}_{u})du}\right) \exp\left(-\int_0^t\sum_{k=1}^d \partial_{x^k} B^k(Y^x_u)du\right)\right]\\
&=&E\left[f(Y^x_{t+s})\exp\left(-\int_0^{t+s}\sum_{k=1}^d \partial_{x^k} B^k(Y^x_u)du\right)\right]=Q_{t+s}(f)(x).
\end{eqnarray*}
\\
 (iii)  We now deal with the boundness of $Q_t$ in $L^2$.
\\
Because ${\rm div}B$ is bounded, we clearly have 
$
|Q_t(f)(x)|\le C E\left[|f(Y^x_t)|\right] .
$
\\
From there, we use Girsanov's Theorem to reduce the SDE to the driftless case.
Let $$Z_t:= \exp  \left[ \int_0^t \sum_{k=1}^dB^k(Y_s^x) dW^k_s- \frac{1}{2}\int_0^t \|B(Y_s^x)\|^2 ds\right]$$
and we change measure to ${\mathbb Q}$, the probability measure defined as:
$$\frac{d{\mathbb Q}}{d{\mathbb P}}_{|{\mathcal F}_t} =Z_t.$$ 

To distinguish the expectation under $\mathbb{Q}$ from the expectation under $\mathbb{P}$, we use the notation 
$E_{\mathbb{Q}}$ and $E_{\mathbb{P}}$. Recall $B$ is bounded, we get :
\begin{align*}
|Q_t(f)(x)|&\le C E_{{\mathbb Q}}\left[|f(Y^x_t)|Z_t^{-1}\right] .
\end{align*}
Note that
$$Z^{-1}_t = \exp \left[ - \sum_{k=1}^d \int_0^t B(Y_s^x) dY^x_s -\frac{1}{2} \int_0^t \|B(Y_s^x)\|^2ds \right].$$
According to the Girsanov Theorem, $Y^x-x$ is a ${\mathbb Q}$ Brownian motion issued from 0 and using Cauchy-Schwartz inequality, we get
\begin{align*}
&|Q_t(f)(x)|\\
&\le C E_{{\mathbb P}}\left[|f(x+W_t)|\exp \left[ -\int_0^t\sum_{k=1}^d B^k(x+W_s) dW_s ^k- \frac{1}{2} \int_0^t \|B(x+W_s)\|^2 ds \right]\right]\\
& \le { C}\sqrt{E_{{\mathbb P}}\left[|f(x+W_t)|^2\right]E_{{\mathbb P}}\left[\exp \left[ -2\int_0^t\sum_{k=1}^d B^k(x+W_s) dW_s ^k- \int_0^t \|B(x+W_s)\|^2 ds \right]\right]}.
\end{align*}
Now, to recover a martingale, we write:
\begin{eqnarray*}
&&\exp \left[ -2\int_0^t\sum_{k=1}^d B^k(x+W_s) dW_s ^k- \int_0^t \|B(x+W_s)\|^2 ds \right]\\
&=&\exp \left[ -2 \int_0^t\sum_{k=1}^d B^k(x+W_s) dW_s ^k-2 \int_0^t \|B(x+W_s)\|^2 ds \right]\times \exp \left[ \int_0^t \|B(x+W_s)\|^2 ds \right].
\end{eqnarray*}
The first exponential is a martingale and has expectation 1 under $\PP$, so that:
\begin{eqnarray*}
|Q_t(f)(x)|&\le& { C}\sqrt{E_{{\mathbb P}}\left[|f(x+W_t)|^2\right]E_{{\mathbb P}}\left(\exp \int_0^t \|B(x+W_s)\|^2 ds \right)} \\
&\le& { C e^{\frac12\|B\|_{\infty}^2 T}} \sqrt{E_{{\mathbb P}}\left[|f(x+W_t)|^2\right]}.
\end{eqnarray*}
%
Thus, { using Tonelli Theorem }
we have that $(Q_t)_{t \ge 0}$ is a bounded operator in $L^2$:
\begin{eqnarray}
\label{maj}
\Vert Q_t(f) \Vert_{L^2}^2 &\le&  C^2 e^{\|B\|_{\infty}^2 T}\int_{\R^d} E_{{\mathbb P}}\left[|f(x+W_t)^2|\right] dx \nonumber\\
&=&C^2e^{\|B\|_{\infty}^2 T}E_{{\mathbb P}}\left[   \int_{\R^d}|f(x+W_t)|^2 dx \right] = C^2e^{\|B\|_{\infty}^2 T}\Vert f \Vert_{L^2}^2.
\end{eqnarray}
(iv) 
 We now prove
the strong continuity of the semi-group $Q_t.$
\\
Firstly, for $f$ be a continuous function with compact support,  we expand:
\begin{eqnarray*}
| Q_t(f)(x) - f(x) | &=& 
\left|E_{{\mathbb P}}\Big(f(Y_t^x) \exp\left(-\int_0^t\sum_{k=1}^d \partial_{x^k} B^k(Y^x_u)du\right)- f(x) \Big)\right|\\
&\le& \left|E_{{\mathbb P}}\left[\Big(f(Y_t^x)-f(x)\Big) \exp\left(-\int_0^t\sum_{k=1}^d \partial_{x^k} B^k(Y^x_u)du\right)\right]\right|\\
&&+ \left|E_{{\mathbb P}}\left[f(x) \left(\exp\left(-\int_0^t\sum_{k=1}^d \partial_{x^k} B^k(Y^x_u)du\right)-1 \right)\right]\right|\\
&=& I(t,x) + I\!\!I(t,x).
\end{eqnarray*}
Secondly since ${\rm div B}$ is bounded, we can use the dominated convergence theorem: 
\begin{itemize}
\item Point-wise convergence:
$$
f(x) \left(\exp\left(-\int_0^t\sum_{k=1}^d \partial_{x^k} B^k(Y^x_u)du\right)-1 \right) \underset{t \rightarrow 0}{\longrightarrow} 0.
$$
\item Domination:
$$
\left| f(x) \left(\exp\left(-\int_0^t\sum_{k=1}^d \partial_{x^k} B^k(Y^x_u)du\right)-1 \right) \right| \le C |f(x)|, \quad dx\otimes d\mathbb{P}\mbox{ integrable}
$$
\end{itemize}
Thus, we have
$$
 \int  \Big|I\!\!I(t,x) \Big|^2 dx \underset{t \rightarrow 0}{\longrightarrow} 0.
$$
We now deal with $I(t,x) $.
Since ${\rm div B}$ is bounded, we proceed as in (iii) to get:
$$
I(t,x) = \left|E_{{\mathbb P}}\left[\Big(f(Y_t^x)-f(x)\Big) \exp\left(-\int_0^t\sum_{k=1}^d \partial_{x^k} B^k(Y^x_u)du\right)\right]\right| \le C_T E_{\PP} \Big( f(x+W_t) - f(x)\Big),
$$
and thus conclude using the strong continuity of the Brownian semigroup (note that is is enough to obtain continuity in a dense subspace, due to the domination condition \eqref{maj}). 
%

%
(v) Finally  let $f \in L^2({\mathbb R})$;
 $\forall \varepsilon >0 $
 there exists $g \in L^2\cap C({\mathbb R}^d, {\mathbb R})$ such that
$\|g-f\|_{L^2} <\varepsilon$ and $t_0$ such that for $0 \leq t <t_0,$
$ \|Q_t(g) - g\|_{L^2}\leq \varepsilon.$ Then for $0 \leq t <t_0,$ and using \eqref{maj}
\begin{align*}
\|Q_t(f) -f\|_{L^2} &\leq \| Q_t(f-g) \|_{L^2} + \|f-g\|_{L^2} + \|Q_t(g) - g\|_{L^2}
\\
&\leq 2\varepsilon+ Ce^{4\|B\|_{\infty}^2 T/2}\Vert f-g \Vert_{L^2}.
\end{align*}
Thus, $(Q_t)_{t \ge 0}$ is strongly continuous.

\end{proof}

The following lemmas are a collection of tools needed  in Section \ref{unique}.
\\

 \begin{lemma}
 \label{convolPcen}
 Let $g_\alpha(t):= t^{\alpha/2 -1}.$
 Then for all $n\geq 0,$ $g_\alpha^{*n}(t)=t^{n\alpha/2-1} \frac{(\Gamma(\alpha/2))^n}{\Gamma(n \alpha/2)}$.
 \end{lemma}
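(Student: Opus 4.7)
The plan is to prove this identity by induction on $n$, using the classical Beta function identity
\[
\int_0^t (t-s)^{a-1} s^{b-1} \, ds = t^{a+b-1} \frac{\Gamma(a)\Gamma(b)}{\Gamma(a+b)}, \qquad a,b>0,
\]
which follows from the change of variable $s = tu$ and the definition of the Beta function $B(a,b) = \Gamma(a)\Gamma(b)/\Gamma(a+b)$.

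For the base case $n=1$, we have $g_\alpha^{*1}(t) = g_\alpha(t) = t^{\alpha/2 - 1}$, and the right-hand side gives $t^{\alpha/2 - 1} \frac{\Gamma(\alpha/2)}{\Gamma(\alpha/2)} = t^{\alpha/2 - 1}$, so the formula holds. For the inductive step, assuming the formula at rank $n$, I would compute
\[
g_\alpha^{*(n+1)}(t) = \int_0^t g_\alpha^{*n}(t-s)\, g_\alpha(s)\, ds = \frac{(\Gamma(\alpha/2))^n}{\Gamma(n\alpha/2)} \int_0^t (t-s)^{n\alpha/2 - 1}\, s^{\alpha/2 - 1}\, ds.
\]
Applying the Beta identity with $a = n\alpha/2$ and $b = \alpha/2$ gives
\[
g_\alpha^{*(n+1)}(t) = \frac{(\Gamma(\alpha/2))^n}{\Gamma(n\alpha/2)} \cdot t^{(n+1)\alpha/2 - 1} \cdot \frac{\Gamma(n\alpha/2)\,\Gamma(\alpha/2)}{\Gamma((n+1)\alpha/2)} = t^{(n+1)\alpha/2 - 1} \frac{(\Gamma(\alpha/2))^{n+1}}{\Gamma((n+1)\alpha/2)},
\]
which is exactly the formula at rank $n+1$, concluding the induction.

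There is no real obstacle here: the argument is entirely routine once the Beta integral is recognized. The only subtlety is that the parameters $n\alpha/2$ and $\alpha/2$ are positive (since $\alpha \in (0,1)$ as used in Section \ref{unique}), which guarantees the convergence of the Beta integral at both endpoints and the validity of the formula for all $n \ge 1$.
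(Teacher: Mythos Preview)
Your proof is correct and follows essentially the same approach as the paper: induction on $n$ with the inductive step reduced to the Beta integral $\int_0^t (t-s)^{n\alpha/2-1}s^{\alpha/2-1}\,ds = t^{(n+1)\alpha/2-1}B(n\alpha/2,\alpha/2)$, followed by the identity $B(a,b)=\Gamma(a)\Gamma(b)/\Gamma(a+b)$. The paper performs the change of variable $u=tv$ explicitly, whereas you quote the Beta identity up front, but the argument is the same.
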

  \begin{proof} 
 For $n=1$, this is trivially true. 
 Assume the identity holds for $n \ge 1$. We have:
 $$
 g_\alpha * g_\alpha^{*n}(t) = \int_0^t u^{\alpha/2 -1} (t-u)^{n\alpha/2-1} \frac{(\Gamma(\alpha/2))^n}{\Gamma(n \alpha/2)} du .
 $$
 Changing variables to. $u=tv$, we get:
 $$
 g_\alpha * g_\alpha^{*n}(t) = \int_0^1 v^{\alpha/2 -1} t^{\alpha /2 -1} (1-v)^{n\alpha/2-1}t^{n\alpha/2-1}\frac{(\Gamma(\alpha/2))^n}{\Gamma(n \alpha/2)} tdv 
 = t^{(n+1)\alpha/2-1} B(\alpha/2,n\alpha/2) \frac{(\Gamma(\alpha/2))^{n}}{\Gamma(n\alpha/2)}.
 $$
 Recalling $B(z_1,z_2) = \frac{\Gamma(z_1) \Gamma(z_2) }{ \Gamma(z_1 + z_2)}$ we get the announced identity.
 \end{proof}
  
\begin{lemma}
\label{lemme:approx:dirac}
Let 
$$
\chi(u):= \begin{cases}
 ~a\exp\left( -\frac{1}{1-u^2}\right),&\mbox{if}~~u\in ]-1,1[,\\
~~0 &\mbox{else}
\end{cases}
$$ 
and 
$\chi_\varepsilon(m)=\frac{1}{\varepsilon} \chi(\frac{m-x^1}{\varepsilon}),$ 
 $a$ such that $\int\chi(u)du=1$.  Then when $\varepsilon\rightarrow 0$
there is a  convergence to Dirac measure:
$\lim_{\varepsilon\rightarrow 0}\chi_\varepsilon(m)dx^1=\delta_m(dx^1)
$
meaning for all  $f$ continuous with respect to $x^1$ {and satisfying  $\sup_{x^1} |f(x^1, \tilde x;t)|\in L^1(\R^{d} \times [0,T] )$,}
 $\forall m:$
$$\lim_{\varepsilon\rightarrow 0}\int_0^T\int_{\R^{d}} f(x^1, \tilde x;t){ \chi_\varepsilon(m)}
dx^1d \tilde x dt\rightarrow\int_0^T \int_{\R^{d-1}}f(m, \tilde x;t)d x dt.
$$
\end{lemma}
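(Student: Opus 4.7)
The plan is to reduce this mollifier statement to standard dominated convergence arguments, applied first after a scaling change of variables in $x^1$ and then to the outer integral over $(\tilde x, t)$.

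As a first step, I would substitute $u = (m - x^1)/\varepsilon$ in the inner integral; for every fixed $(m, \tilde x, t)$ this yields
$$\int_{\R} f(x^1, \tilde x; t) \chi_\varepsilon(m) \, dx^1 = \int_{-1}^{1} f(m - \varepsilon u, \tilde x; t) \chi(u) \, du.$$
The continuity of $x^1 \mapsto f(x^1, \tilde x; t)$ then gives the pointwise convergence $f(m - \varepsilon u, \tilde x; t) \to f(m, \tilde x; t)$ for every $u \in (-1,1)$ as $\varepsilon \to 0$, while the estimate $|f(m - \varepsilon u, \tilde x; t) \chi(u)| \le \sup_{y^1}|f(y^1, \tilde x; t)|\, \chi(u)$ provides a $u$-integrable dominating function, since $\chi$ has compact support in $[-1,1]$ and total mass one. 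Dominated convergence therefore delivers
$$\lim_{\varepsilon \to 0} \int_{-1}^{1} f(m - \varepsilon u, \tilde x; t) \chi(u) \, du = f(m, \tilde x; t).$$

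As a second step, I would apply dominated convergence to the outer integral over $(\tilde x, t) \in \R^{d-1} \times [0,T]$. The previous step supplies the pointwise limit in $(\tilde x, t)$, and the entire $\varepsilon$-family is majorized by $(\tilde x, t) \mapsto \sup_{y^1}|f(y^1, \tilde x; t)|$, which is integrable by hypothesis. Passing to the limit yields exactly the right-hand side $\int_0^T \int_{\R^{d-1}} f(m, \tilde x; t) \, d\tilde x \, dt$ claimed in the lemma.

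The argument presents no genuine obstacle: it is a textbook mollifier computation adapted to the parameter-dependent setting. The only technical point that deserves attention is isolating the correct integrable majorant, namely $\sup_{y^1}|f(y^1, \tilde x; t)|$, and verifying that it controls the integrand uniformly in $\varepsilon$ after the change of variables; this step is precisely what the hypothesis on $f$ is tailored to provide.
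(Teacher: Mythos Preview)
Your proof is correct and follows essentially the same approach as the paper: the same change of variable $u=(m-x^1)/\varepsilon$, pointwise convergence from continuity in $x^1$, and domination by $\sup_{y^1}|f(y^1,\tilde x;t)|\,\chi(u)$ followed by Lebesgue's dominated convergence theorem. The only cosmetic difference is that the paper applies dominated convergence once on the full $(u,\tilde x,t)$ integral, whereas you split it into an inner step in $u$ and an outer step in $(\tilde x,t)$; both are valid and amount to the same argument.
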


 \begin{proof}
 
We change of variable to $u=\frac{m-x^1}{\varepsilon}$:
$$\int_0^T\int_{\R^{d}}[f(x^1,\tilde x;t)-f(m,\tilde x;t)]\chi_\varepsilon(x^1)dx^1d\tilde xdt=$$
$$
\int_0^T\int_{\R^{d}}[f(m-u\varepsilon,\tilde x;t)-f(m,\tilde x;t)]\chi(u)dud\tilde xdt.
$$

Since  $f$ is continuous with respect to $x^1$  and
$\sup_{x^1} |f(x^1,\tilde x;t)|\in L^1(\R^{d-1} \times [0,T] )$ the dominated 
Lebesgue Theorem is applied  so yields the expected limit.
\end{proof}

\begin{lemma}
\label{lem-cont-gamma}
Let  $G\in L^2(\R)$,
then $dtd\tilde{x}$ almost surely  $$x^1 \mapsto \int_0^t \int_{\R^d} G(y^1) q(y^1,y;s) \Gamma(x^1,\tilde{x},y;t-s) d\tilde{y}ds$$ is continuous. 
\end{lemma}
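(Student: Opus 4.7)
The plan is to apply the dominated convergence theorem to the defining integral. Fix a reference point $x^1_0$ and consider $x^1 \to x^1_0$ with $x^1$ constrained to a fixed compact neighborhood $K$. Pointwise convergence of the integrand in $(y,s)$ is immediate from the continuity of $x \mapsto \Gamma(x,y;\tau)$ for each fixed $(y,\tau)$, which is Theorem~3.5 of Garroni--Menaldi \cite{garroni:menaldi:1992} and has already been invoked in the proof of Lemma~\ref{preuvediagball}. Thus only an integrable dominant remains to be exhibited.

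The construction of the dominant proceeds from the Gaussian estimate \eqref{maj-garro-gamma-1}, combined with the analogous bound on $\Gamma_0$, giving $|\Gamma(x,y;t-s)| \leq C(t-s)^{-d/2} \exp\bigl(-|x-y|^2/(c(t-s))\bigr)$. For $x^1 \in K$ this supplies a dominant of the form $C|G(y^1)|\,|q(y^1,y;s)|\,(t-s)^{-d/2}\,\Phi_K(y^1-x^1_0,t-s)\,e^{-|\tilde x - \tilde y|^2/(c(t-s))}$, where $\Phi_K$ retains Gaussian decay in $y^1$ uniformly over $K$. I would then control its integral over $(y,s)$ by iterated Cauchy--Schwarz: first in $\tilde y$, pairing the $L^2(d\tilde y)$-norm of the Gaussian (which is $O((t-s)^{-(d-1)/4})$) against $\|q(y^1,y^1,\cdot;s)\|_{L^2(d\tilde y)}$ from Item (b) of Definition~\ref{def-espace-sol}; then in $y^1$, pairing $G \in L^2(\R)$ against the $L^2(dy^1)$-norm of the resulting function, which is dominated by $\|q(\cdot,\cdot,\cdot;s)\|_{L^2(\R^d)}$ using the trivial bound $e^{-|x^1-y^1|^2/(c(t-s))} \leq 1$. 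Time integrability at the end-point $s=t$ is then secured by matching the residual singularity against the $L^1([0,T])$ integrability of $s \mapsto \|q(\cdot;s)\|_{L^2(\R^d)}$ guaranteed by Item (b).

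The chief difficulty is the time singularity at $s = t$: the pointwise blow-up $(t-s)^{-d/2}$ of $\Gamma$ is not absorbed by the naive $L^2$ bounds in high dimension. When the elementary two-step Cauchy--Schwarz estimate is not sharp enough, I would replace it by the semigroup point of view of Proposition~\ref{lem-semi-groupe-dual}: write the inner integral as $Q_{t-s}[f_s](x)$ with $f_s(y) := G(y^1)\,q(y^1,y^1,\tilde y;s)$, and use the $L^2$-boundedness of $Q_{t-s}$ together with Young-type convolution inequalities coming from the explicit Gaussian estimate. This furnishes an $L^2(\R^d, d x)$ bound on the full integral $F(x^1,\tilde x,t)$, and the qualifier ``$dt\,d\tilde x$ almost surely'' in the statement reflects that the final pointwise-in-$x^1$ control is deduced from these $L^2$ bounds via a Fubini argument before concluding by dominated convergence.
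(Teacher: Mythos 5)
Your first route stalls exactly where you fear it does, and your fallback does not repair it. Following your iterated Cauchy--Schwarz, the dominant for fixed $(\tilde x,t)$ carries the time factor $(t-s)^{-d/2}\cdot(t-s)^{(d-1)/4}=(t-s)^{-(d+1)/4}$ (the $L^2(d\tilde y)$-norm of the Gaussian only gains $(t-s)^{(d-1)/4}$), so even after pairing $G\in L^2(\R)$ with $\|q(\cdot,\cdot,\cdot;s)\|_{L^2(\R^d)}$ and invoking Item (b), the $s$-integral diverges at $s=t$ as soon as $d\ge 3$; the lemma is stated in arbitrary dimension. Your proposed remedy --- writing the inner integral as $Q_{t-s}[f_s]$ and using the $L^2$-boundedness of the semigroup plus Young-type inequalities --- only yields bounds on $\|F(\cdot;t)\|_{L^2(\R^d)}$. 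An $L^2(dx)$ bound on the output function is not an integrable dominant for the integrand, uniform in $x^1$ near $x^1_0$, which is what the dominated convergence argument requires; an $L^2$ function need not be continuous in any variable, so this step does not deliver the conclusion, and the ``Fubini argument'' you invoke is left unspecified at precisely the point where the proof must be closed.

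The paper closes this gap by a different, and essentially unavoidable, averaging: it bounds $\sup_{x^1\in\R}|\Gamma(x^1,\tilde x,y^1,\tilde y;t-s)|\le C(t-s)^{-d/2}e^{-\|\tilde x-\tilde y\|^2/(c(t-s))}$ (using \eqref{rep-gamma-gar}--\eqref{maj-garro-gamma-1}, i.e.\ discarding the $y^1$-Gaussian exactly as you do), but then \emph{integrates this dominant in $\tilde x$ over $\R^{d-1}$ and in $t$ over $[0,T]$} rather than trying to control it for each fixed $(\tilde x,t)$. The $\tilde x$-integration converts $(t-s)^{-d/2}e^{-\|\tilde x-\tilde y\|^2/(c(t-s))}$ into $C(t-s)^{-1/2}$, which is time-integrable in every dimension; Cauchy--Schwarz pairing $G$ with $\sqrt{\int_{\R^d}q^2(y^1,y;s)\,dy}$, Tonelli in $t$, and Item (b) of Definition \ref{def-espace-sol} then give \eqref{appendix-maj-0}, i.e.\ finiteness of the triple integral. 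Consequently the sup-in-$x^1$ dominant is finite for $dt\,d\tilde x$-almost every $(\tilde x,t)$ (this, not a semigroup estimate, is the source of the ``$dt\,d\tilde x$ almost surely'' in the statement), and for those $(\tilde x,t)$ dominated convergence together with the continuity of $x^1\mapsto\Gamma(x^1,\tilde x,y;t-s)$ yields the claimed continuity. If you want to salvage your write-up, replace the attempt to dominate pointwise in $(\tilde x,t)$ (and the semigroup detour) by this integrate-in-$\tilde x$-and-$t$ step; the rest of your Cauchy--Schwarz bookkeeping then goes through.
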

 
\begin{proof} { In this proof, $C$ is a generic positive constant whose value may change from line to line and may depend on $T$. For an easier reading, we here recall
 the previous \eqref{rep-gamma-gar},
  \eqref{maj-garro-gamma-1},
  \eqref{maj-garro-der-gamma-1}:
\begin{align*}
\Gamma(x,y;t) =\Gamma_0(x,y;t) + \Gamma_1(x,y;t)
\mbox{ where }
\Gamma_0(x,y;t) =\frac{e^{-\frac{\|x-y\|^2}{2t}}}{\sqrt{2\pi t}^d},
\end{align*}
and for all $y \in R^d, ~~t>0$ $x\mapsto \Gamma_1(x,y;t) \in C^1(\R^d). $ 
Moreover, for $\alpha \in ]0,1[$  there exists some positive constants $C$ and $c$ such that
for all $(x,y,t)\in {\mathbb R}^{2d} \times [0,T],$
cf. \cite{garroni:menaldi:1992}Lemma 3.3 p.184
\begin{align*}
\left| \Gamma_1(x,y;t)\right| \leq Ct^{-\frac{d}{2} +\frac{\alpha}{2}} e^{-\frac{\|x-y\|^2}{ct}}~;~
\left| \partial_{x^l}\Gamma_1(x,y;t)\right| \leq Ct^{-\frac{d+1}{2}+\frac{\alpha}{2}} e^{-\frac{\|x-y\|^2}{ct}}.
\end{align*}
}
 Firstly note that $(\tilde{x},y,t-s),$ $x^1 \mapsto \Gamma(x^1,\tilde{x},y;t-s)$ is continuous (see e.g. Theorem 3.5 page 186 in \cite{garroni:menaldi:1992}). 
 \\
  Secondly we will prove  that 
 \begin{align}
 \label{eq-appendix-lem-ga-1}
 \int_0^T \int_0^t \int_{\R^{2d-1}} |G(y^1) q(y^1,y;s)|\sup_{x^1}| \Gamma(x^1,\tilde{x},y;t-s)| d\tilde{x} dydsdt < \infty.
 \end{align}
 According to estimation {\eqref{maj-garro-gamma-1}},
 and the definition of $\Gamma_0$:
$$
 \sup_{x^1\in {\mathbb R}}\left|\Gamma(x^1,\tilde{x},y^1, \tilde{y};t-s)\right| \leq C(t-s)^{-\frac{d}{2}} e^{-\frac{\|\tilde{x}-\tilde{y}\|^2}{c(t-s)}},~~
 \forall(\tilde{x},y,t) \in \R^{2d-1}\times[0,T].
$$
Integrating with respect to $\tilde{x}$ we obtain
 \begin{align*}
  \int_0^T \int_0^t \int_{\R^{2d-1}} |G(y^1) q(y^1,y;s)|\sup_{x^1}| \Gamma(x^1,\tilde{x},y;t-s)| d\tilde{x} dydsdt\\
  \leq Cc^{d-1}
 \int_0^T \int_0^t \int_{\R^{d}} |G(y^1) q(y^1,y;s)|
 {(t-s)^{-1/2}} dydsdt .
 \end{align*} 
 Using the fact that $G\in L^2(\R)$ and Cauchy-Schwartz inequality,
 \begin{align*}
   \int_0^T \int_0^t \int_{\R^{2d-1}} |G(y^1) q(y^1,y;s)|\sup_{x^1}| \Gamma(x^1,\tilde{x},y;t-s)| d\tilde{x} dydsdt\\
\leq Cc^{d-1} \|G\|_{L^2}\int_0^T \int_0^t \sqrt{\int_{R^d} { q^2(y^1,y;s)}dy} {(t-s)^{-1/2}}dsdt
 \end{align*} 
Using Tonelli's theorem, we obtain
\begin{align*}
   \int_0^T \int_0^t \int_{\R^{2d-1}} |G(y^1) q(y^1,y;s)|\sup_{x^1}| \Gamma(x^1,\tilde{x},y;t-s)| d\tilde{x} dydsdt\\
\leq Cc^{d-1} \|G\|_{L^2}\int_0^T \sqrt{\int_{R^d} 
{ q^2(y^1,y,s)}dy}\int_s^T {(t-s)^{-1/2}} dt ds.
 \end{align*} 
 Then,
 \begin{align}
 \label{appendix-maj-0}
   \int_0^T \int_0^t \int_{\R^{2d-1}} |G(y^1) q(y^1,y;s)|\sup_{x^1}| \Gamma(x^1,\tilde{x},y;t-s)| d\tilde{x} dydsdt\\
\leq 2T^{1/2}Cc^{d-1} \|G\|_{L^2}\int_0^T \sqrt{\int_{R^d} { q^2(y^1,y,s)}dy}ds<\infty\nonumber
 \end{align}
 according to Item (b) 
 of Definition \ref{def-espace-sol}.
 That means that $d\tilde{x}dt$ almost surely
  \begin{align}
  \label{appendix-def-neg-0}
 \int_0^t \int_{\R^{d}} |G(y^1) q(y^1,y;s)|\sup_{x^1}| \Gamma(x^1,\tilde{x},y;t-s)| dyds<\infty.
 \end{align}
  Thirdly for all $(\tilde{x},t)$ such that \eqref{appendix-def-neg-0} holds,
 according to the dominated convergence theorem 
  $x^1 \mapsto \int_0^t \int_{\R^d} G(y^1) q(y^1,y;s) \Gamma(x^1,\tilde{x},y;t-s) d\tilde{y}ds$ is continuous (see Theorem 3.5 page 186 in \cite{garroni:menaldi:1992}).

\end{proof} 
 
\begin{lemma}
\label{lem-cont-gamma-2}
{ Let $F~:{\mathbb R}^{d-1} \rightarrow {\mathbb R}$ be a continuous function with compact support, then $dtdy^1$ almost surely  $x^1 \mapsto \int_0^t \int_{\R^{2d-2}} F(\tilde{x}) q(y^1,y^1,\tilde{y};s) \Gamma(x^1,\tilde{x},y^1,\tilde{y};t-s) d\tilde{y}dsd\tilde{x}$ is continuous.}
\end{lemma}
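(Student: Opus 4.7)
My plan is to mirror the strategy of Lemma \ref{lem-cont-gamma}: pointwise continuity of the integrand combined with a dominating function that is integrable for $(y^1,t)$ almost surely, so that Lebesgue's dominated convergence theorem yields the continuity of $x^1 \mapsto \int_0^t \int_{\R^{2d-2}} F(\tilde x) q(y^1,y^1,\tilde y;s) \Gamma(x^1,\tilde x,y^1,\tilde y;t-s)\,d\tilde y ds d\tilde x$. Pointwise continuity of $x^1 \mapsto \Gamma(x^1,\tilde x,y^1,\tilde y;t-s)$ is Theorem 3.5 of \cite{garroni:menaldi:1992}, so the heart of the argument is locating the dominating function.

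The natural candidate is $|F(\tilde x)||q(y^1,y^1,\tilde y;s)| \sup_{x^1\in\R}|\Gamma(x^1,\tilde x,y^1,\tilde y;t-s)|$. Using the decomposition $\Gamma=\Gamma_0+\Gamma_1$ together with \eqref{maj-garro-gamma-1} and the explicit form of $\Gamma_0$, the supremum over $x^1$ of the Gaussian factor in $x^1-y^1$ equals $1$, and we obtain the clean bound $\sup_{x^1}|\Gamma(x^1,\tilde x,y^1,\tilde y;t-s)| \le C(t-s)^{-d/2} e^{-\|\tilde x-\tilde y\|^2/(c(t-s))}$. I would factor this as $(t-s)^{-1/2}$ times a Gaussian-like kernel in $\tilde x-\tilde y$ whose integral in $\tilde y$ (or $\tilde x$) is a constant independent of $t-s$.

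Then I would apply Cauchy--Schwarz in the pair $(\tilde x,\tilde y)$ against the measure with density proportional to that kernel: this yields
\[
\int_{\R^{2d-2}} |F(\tilde x)||q(y^1,y^1,\tilde y;s)|\sup_{x^1}|\Gamma|\,d\tilde x d\tilde y \le C (t-s)^{-1/2} \|F\|_{L^2}\, \|q(y^1,y^1,\cdot;s)\|_{L^2(\R^{d-1})},
\]
because splitting the kernel evenly between the two factors gives bounded $L^1$ masses on each side. After integrating in $s\in[0,t]$ and then in $t\in[0,T]$, Tonelli's theorem replaces the inner $t$-integral by $\int_s^T (t-s)^{-1/2} dt \le 2\sqrt{T}$, leaving $\int_0^T \|q(y^1,y^1,\cdot;s)\|_{L^2(\R^{d-1})} ds$.

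The main obstacle is that Item (b) of Definition \ref{def-espace-sol} only gives $L^1$ in time of the \emph{full} $L^2$ norm in $(y^1,\tilde y)$, not $L^1$ in $(t,y^1)$ of $\|q(y^1,y^1,\cdot;s)\|_{L^2_{\tilde y}}$. I would overcome this by proving only local integrability in $y^1$: for any $M>0$, Cauchy--Schwarz in $y^1$ over $[-M,M]$ gives
\[
\int_{-M}^{M} \|q(y^1,y^1,\cdot;s)\|_{L^2(\R^{d-1})}\,dy^1 \le \sqrt{2M}\,\|q(\cdot,\cdot,\cdot;s)\|_{L^2(\R^d)},
\]
and integrating in $s$ then invoking Item (b) shows this is finite. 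Hence the dominating integral is finite for $(y^1,t)$ almost surely on $[-M,M]\times[0,T]$ for every $M$, so it is finite $dy^1 dt$ almost surely on $\R\times[0,T]$. Dominated convergence then delivers the announced continuity.
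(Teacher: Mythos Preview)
Your proposal is correct and follows essentially the same route as the paper: dominate by $\sup_{x^1}|\Gamma|$, localize $y^1$ to a compact interval, integrate in $(t,y^1,s,\tilde x,\tilde y)$ and invoke Item (b) to get finiteness $dt\,dy^1$ almost surely, then apply dominated convergence together with the pointwise continuity of $\Gamma$. The only cosmetic difference is that the paper bounds $|F|\le\|F\|_\infty$ and directly recycles estimate \eqref{appendix-maj-0} from Lemma~\ref{lem-cont-gamma} with $G=\mathbf 1_{[-n,n]}$, whereas you use $\|F\|_{L^2}$ via a Cauchy--Schwarz in $(\tilde x,\tilde y)$; the localization and the appeal to Item (b) are identical.
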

 \begin{proof} Let $G$ be the indicator of the intervall $[n,n].$ 
According to estimation \eqref{appendix-maj-0} and since $F$ is bounded,
almost surely $dtdy^{1}$ on $[0,T]\times [-n,n]$
\begin{align}
\label{appendix-def-neg-2}
\int_0^t \int_{\R^{2d-2}} |F(\tilde{x})|| q(y^1,y^1,\tilde{y},s)|\sup_{x^1} \Gamma(x^1,\tilde{x},y^1,\tilde{y};t-s)| d\tilde{y}dsd\tilde{x}<\infty.
\end{align}
Since $(\tilde{x},y,t-s)$ $x^1 \mapsto \Gamma(x^1,\tilde{x},y;t-s)$ is continuous,  using Lebesgue dominated theorem for $(t,y^1)$ such that \eqref{appendix-def-neg-2} holds:
\\
$x^1 \mapsto \int_0^t \int_{\R^{2d-2}} F(\tilde{x}) q(y^1,y^1,\tilde{y},s) \Gamma(x^1,\tilde{x},y^1,\tilde{y};t-s) d\tilde{y}dsd\tilde{x}$ is continuous.
\\
Since $\cup_n [-n,n]=\R$ this achieves the proof of Lemma \ref{lem-cont-gamma-2}.
 
\end{proof} 
\section{Conclusion and perspectives}

In this work, we have have established the uniqueness of the solution to equation \eqref{edp1m}. Together with the findings of \cite{coutin:pontier:2019} and \cite{coutin:pontier:2020}, which address the governing equation for a diffusion process and its running supremum, this trilogy of studies establishes that the joint law is absolutely continuous with a smooth density, that the density admits a series expansion satisfying a non-standard PDE, and that this PDE has a unique solution. In this section, we explore potential research directions stemming from these contributions

First and foremost, a natural extension of our model would allow for a diffusion coefficient. In light of Frikha and Li \cite{Frikha:li:2020}, it should also be feasible to extend our approach to path-dependent cases, or at least to cases where the coefficients may depend on the supremum. Another direction for future investigation is the case where the coefficients have low regularity, a setting for which the parametrix method is particularly well-suited.

Regarding PDEs with boundary conditions, there are numerous ways to address low regularity in the coefficients. One approach might involve modifying the notion of uniqueness, as a pointwise (or almost everywhere) definition often fails in low-regularity contexts, requiring the use of $L^p$ viscosity solutions. For example, the strategy described in \cite{Bahlali:Boufoussi:Mouchtabih:2022} could be relevant here.

Focusing on our method, one could ask what is the most general case in which it guarantees uniqueness. In abstract frameworks such as \cite{Hoffmann:Wald:Nguyen:2022}, specific conditions on the operators ensure uniqueness. Naturally, one might wonder whether our approach fits into their framework or if it could inspire a different method for proving uniqueness. In this regard, the method proposed in Frikha-Li \cite{Frikha:li:2020}, building on the work of Bass and Perkins \cite{bass:perkins}, could also be examined abstractly to explore the smoothing properties of the kernel in a broader setting.

\acks 
\noindent The authors would like to thank both referees and the editor for their insightful remarks, as well as Romain Dubosc, Moritz Kassmann, and Philippe Laurençot for helpful discussions on this problem.
\\


\competing 
\noindent There were no competing interests to declare which arose during the preparation or publication process of this article.

%
%
%
%

%
%
%
%

\end{document}